\newcommand{\subjclass}[2][2010]{%
  \let\@oldtitle\@title%
  \gdef\@title{\@oldtitle\footnotetext{#1 \emph{Mathematics subject classification.} #2}}%
}
\newcommand{\keywords}[1]{%
  \let\@@oldtitle\@title%
  \gdef\@title{\@@oldtitle\footnotetext{\emph{Key words and phrases.} #1.}}%
}
\newtheorem{lem}{Lemma}
\newtheorem{theo}[lem]{Theorem}
\newtheorem{coro}[lem]{Corollary}
\newtheorem{prop}[lem]{Proposition}
\theoremstyle{definition}
\newtheorem{definition}[lem]{Definition}
\newtheorem{rem}[lem]{Remark}
\renewcommand{\descriptionlabel}[1]%
     {\hspace{\labelsep}\textsf{#1}}
\newtheorem{theorem}[lem]{Theorem}
\newtheorem*{lemma*}{Lemma}
\newtheorem*{theorem*}{Theorem}
\theoremstyle{remark}
\newcommand{\mycaption}[1]{\begin{quote} 
 \caption{\small#1\normalsize}\end{quote}\vspace{-3em}}
\newcommand\f{{\bf f}}
\newcommand\bF{{\mathbb F}} 
\newcommand\C{{\mathbb C}}
\newcommand\U{{\mathbb U}}
\newcommand\T{{\mathbb T}}
\newcommand\Z{{\mathbb Z}}
\newcommand\R{{\mathbb R}}
\newcommand\I{{\mathcal I}}
\newcommand\bP{{\mathbb P}}
\newcommand\bbar{\overline}
\newcommand\biot{{\boldsymbol\iota}}
\newcommand\A{{\mathcal A}}
\newcommand\cC{{\mathscr C}}
\newcommand\Aut{{\rm Aut}}
\newcommand\ssm{{\smallsetminus}}
\def\bet{{\boldsymbol\eta}}
\def\brh{{\boldsymbol\rho}}
\def\bmu{{\boldsymbol\mu}}
\def\cC{{\mathcal C}}
\def\bT{{\mathbb T}}
\def\bP{{\mathbb P}}
\def\Q{{\mathbb Q}}
\def\cH{{\mathcal H}}
\def\p{{\bf p}}
\def\q{{\bf q}}
\def\r{{\bf r}}
\def\s{{\bf s}}
\def\o{{\bf o}}
\def\msk{\medskip}
\def\bsk{\bigskip}
\def\ssk{\smallskip}
\def \A {{\bf A}}
\def \M {{\bf M}}
\def \X {{\bf X}}
\def \U {{\bf U}}
\begin{document}

\setcounter{footnote}{1}
\setcounter{equation}{0}

\title{On Real and Complex Cubic Curves$^1$}

\author{Araceli Bonifant$^2$ and John Milnor$^3$}

\footnotetext[1]{To appear in L'Enseign. Math.}
\footnotetext[2]{Mathematics Department, University of Rhode Island; e-mail:\hfill{\mbox{}}\\ 
\hbox{bonifant@uri.edu}}
\footnotetext[3]{Institute for Mathematical Sciences, Stony Brook University;
e-mail: \hfill{\mbox{}}\\\hbox{jack@math.stonybrook.edu }}

\date{}

\keywords{elliptic curve, Hesse normal form, standard normal form, 
projective equivalence, tangent process, flex point}

\subjclass{14-03, 01-01, 14H52, 14H37, 14H50,  14P99}
\maketitle

\begin{abstract}
An expository description of smooth cubic curves in the real or
 complex projective  plane. 
\end{abstract}

\setcounter{lem}{0}
\setcounter{footnote}{2}

\section{Introduction.} This note will present 
elementary proofs for basic facts about smooth cubic curves $\cC$
in the complex projective plane $\bP^2(\C)$, or the corresponding curves
$\cC_\R$ in the real projective plane $\bP^2(\R)$ when the defining
 equation has real coefficients.
The presentation will center around two
different normal forms, which we refer to as the
 \textbf{\textit{Hesse normal form}}
\begin{equation}\label{E-H} x^3+y^3+z^3~=~ 3\,k\, x\,y\,z~ \end{equation}
(using homogeneous coordinates), and the \textbf{\textit{standard normal form}}
\begin{equation}\label{E-W} y^2~=~ x^3+ax+b \end{equation}
(using affine coordinates $(x:y:1)$ with $z=1$).
We are particularly concerned with classification up to 
\textbf{\textit{projective equivalence}}
(that is up to a linear change of coordinates in the projective plane).
The set of \textbf{\textit{flex points}} (three in the real case and nine
 in the complex case) plays a central role in our exposition.

Although much of the material which follows is well known, there are a
few things which we have not been able to find in the literature.
One of these is the following concise classification
(see Theorem \ref{T-realH}):

\begin{quote}\it Every smooth irreducible real cubic curve ${\mathcal C}_\R$
 is real projectively equivalent to one and only one curve $\cC(k)_\R$ in the 
Hesse normal form. Here any real parameter $k\ne 1$ can occur. The
curve $\cC(k)_\R$ is connected if $k<1$, and  has two 
components if $k>1$.
\end{quote}

 Another is the precise description of the automorphism group, 
consisting of projective transformations which map the curve to itself. 
This has order 6 in the real case, 
and order 18 for a generic complex curve; but has order
36 or 54 in the special case of a complex curve which has square or hexagonal
symmetry. In  all cases it has a maximal
abelian subgroup which acts freely on the curve, and acts transitively on 
its set of flex points. (See Corollaries \ref{C-aut2} and \ref{C-realaut}.)
On the other hand, the group of birational automorphisms,   
which is canonically isomorphic to the group of conformal automorphisms, 
acts transitively on the entire curve (Corollaries \ref{C-aut3} 
and~\ref{C-birat}). 

One useful elementary remark is that the projective
equivalence class of a curve in the standard normal form is uniquely
determined by the \textbf{\textit{shape}}
 of the ``triangle'' in the complex $x$-plane
formed by the three roots of the equation $x^3+ax+b=0$. (See Figure 
\ref{F-jpic}, as well as Definition \ref{D-shape} and Proposition \ref{P-tri}.)
\medskip

The paper is organized as follows.
Sections \ref{S-H} through \ref{S-CT} concentrate on the complex case
(although some arguments work just
as well over an arbitrary subfield $\bF\subset\C$).
 Section~\ref{S-H} studies flex points, reduction to Hesse normal form,
 and provides a preliminary description of the automorphism group.
Section~\ref{S-class} studies reduction to the standard  normal form,
as well as the $J$-invariant
$$ J(\cC)~=\frac{4a^{\;3}}{4a^{\;3} + 27\,b^{\;2}}~,$$
and the computation of $J$ as a function of the Hesse parameter $k$.
Section~\ref{S-conf} discusses the conformal classification of $\cC$ as
a Riemann surface, and shows that a conformal 
diffeomorphism from $\cC$ to $\cC'$ extends to a projective automorphism
of $\bP^2$ if and only if it maps flex points to flex points.
Section~\ref{S-CT} describes the chord-tangent map
 $\cC\times\cC\to\cC$ and the related additive group structure on the curve.
Finally, Section~\ref{S-R} describes real cubic curves. In particular,
it provides a canonical affine picture, so that the automorphisms are
clearly visible, and so that any two real curves can be directly compared.
(See Figure \ref{F-canpics}.)\bigskip

{\bf Notation.} We use the notation $(x,y,z)$ for a non-zero
point of the  complex 3-space $\C^3$, and the notation $(x:y:z)$ for the
corresponding point of $\bP^2\!$, representing the equivalence
class consisting of all multiples $(\lambda\,x,\,\lambda\,y,\,\lambda\,z)$
with $\lambda\in\C\ssm\{0\}$. However, it is sometimes convenient to represent
a point of $\bP^2$ by a single bold letter such as $\p$. Note that any linear
automorphism of ${\mathbb C}^3$ gives rise to a
 \textbf{\textit{projective automorphism}} of the projective plane
 ${\mathbb P}^2$.
\bigskip

{\bf Historical Remarks.}
Hesse's actually used a constant multiple of our $k$ as parameter. 
Our ``standard normal form'' is a special case of a form used
 by Newton, and is a close relative of the form which Weierstrass 
introduced much later. Our $J(\cC)$ is
just Felix Klein's invariant $j(\cC)$ divided by 1728. (For the original
papers, see  \cite{Ne}, \cite[H2]{H1},  
\cite[W2]{W1}, and \cite{Kl}.) The ``tangent'' part of the chord-tangent
map was used by \hbox{Diophantus}
 of Alexandria in the third century to construct
new rational points on a cubic curve from known ones,  although
this was done in a purely algebraic way.   More 
 than thirteen centuries later, in the 1670's Newton used the 
``chord-tangent construction'' to
 interpret the solutions of Diophantine equations given by Diophantus and 
Fermat. (Compare \cite[Section 11.6]{Sti}.) Weil says that the chord process
was first used by Newton, although Bashmakova claims that it was used already
by Diophantus. (See \cite{Weil} and \cite{Ba}.)
The closely related additive structure is  due to Poincar\'e \cite{P}, who 
was the first to study the arithmetic of algebraic curves. (Compare 
\cite{Kn} and \cite[p.~412]{Ba}.) Real
cubic curves in the affine plane were studied by Newton. 
(Compare \cite{Ne}, as well as \cite{BK}.) For
further historical remarks, see \cite{AD}, \cite{Dol}, and \cite{RB}. For
an elementary introduction to the field see \cite{Gib};
and for real cubic curves from an older point of view, see \cite{Wh}.
\bigskip

\section{Hesse Normal Form for Complex Cubic Curves. }\label{S-H}
This section will be concerned with the work of Otto Hesse 
and its consequences. (See \cite[H2]{H1}, both published in 1844.) 
Hesse introduced\footnote{Gibson refers to the $\cC(k)$ as
 ``Steiner curves'', presumably referring to \cite{St}.}
the family of cubic curves $\cC(k)$ consisting of all points 
 $(x:y:z)$ in the projective plane $\bP^2=\bP^2(\C)$ which satisfy the
homogeneous equation $\Phi_k(x,y,z)=0$ where 
\begin{equation}\label{E-hnf}
\Phi_k(x,y,z)~=~x^3+y^3+z^3~-~3\,k\,x\,y\,z~.
\end{equation}
Given a generic point $(x:y:z)$ in $\bP^2$, we can solve the equation 
$\Phi_k(x,y,z)=0$ for
$$k~=~\frac{x^3+y^3+z^3}{3\,x\,y\,z}~\in~\C\cup\{\infty\}.$$
(Thus for $k=\infty$ we define $\cC(\infty)$ to be the 
 locus $xyz=0$.) However, there are nine
 \textbf{\textit{exceptional points}}\footnote{We will see in 
Remark \ref{R-hess-flex} that these nine ``exceptional points'' 
on any smooth $\cC(k)$ are precisely the nine flex points.}
where
$${\rm both}\qquad x^3+y^3+z^3~=~0\qquad{\rm and}\qquad x\,y\,z~=0~,$$
so that
the parameter $k$ is not uniquely defined. All of the curves $\cC(k)$
pass through all of these nine points, which have the form
\begin{equation}\label{E-hess-flex}
 (0:1:-\gamma)\quad{\rm or}\qquad(-\gamma:0:1)\quad{\rm or}
\quad(1:-\gamma:0)\quad{\rm with}\quad \gamma^3=1~.
\end{equation}
In the complement of these nine exceptional points, the space $\bP^2(\C)$
is the disjoint union of the Hesse curves.
\bigskip

\begin{definition}\label{D-sing} 
A point of the curve  $~\Phi(x,y,z)~=~0~$ is  \textbf{\textit{singular}}
if the partial derivatives $\Phi_x,\,\Phi_y$ and $\Phi_z$ all vanish at the
point. The curve is called \textbf{\textit{smooth}} if it has no singular
points.\end{definition} \medskip

\begin{lem}\label{L-Hsing} The Hesse curve $\cC(k)$ has
 singular points if and only if  either $k^3=1$ or $k=\infty$.
\end{lem}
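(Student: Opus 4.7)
The plan is to analyze when the system $\Phi_x=\Phi_y=\Phi_z=0$ admits a projective solution. A direct computation gives
$$\Phi_x~=~3(x^2-kyz),\qquad \Phi_y~=~3(y^2-kxz),\qquad \Phi_z~=~3(z^2-kxy),$$
and Euler's identity $3\,\Phi_k=x\,\Phi_x+y\,\Phi_y+z\,\Phi_z$ guarantees that any common zero of the three partials automatically lies on $\cC(k)$, so membership in the curve need not be checked separately. The case $k=\infty$ is handled by inspection: the locus $xyz=0$ is the union of three coordinate lines, singular precisely at the three pairwise intersections $(1:0:0)$, $(0:1:0)$, $(0:0:1)$.

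For finite $k$, I first dispose of the degenerate configurations. If $k=0$ the three equations collapse to $x^2=y^2=z^2=0$, which has no projective solution. If $k\ne 0$ and some coordinate vanishes, say $x=0$, then $\Phi_x=0$ reads $k\,yz=0$, forcing a second coordinate to vanish; the remaining equation then forces the third to vanish as well, which is impossible in $\bP^2$.

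Hence I may assume $k\ne 0$ and $xyz\ne 0$. Multiplying $x^2=k\,yz$ by $x$, $y^2=k\,xz$ by $y$, and $z^2=k\,xy$ by $z$ yields the symmetric chain
$$x^3~=~y^3~=~z^3~=~k\,x\,y\,z.$$
Writing $y=\omega\,x$ and $z=\omega'\,x$ with $\omega^3=(\omega')^3=1$, the relation $x^2=k\,yz$ becomes $1=k\,\omega\,\omega'$, so $k=(\omega\,\omega')^{-1}$; since $\omega\,\omega'$ is itself a cube root of unity, this forces $k^3=1$. Conversely, for any $k$ with $k^3=1$, choosing cube roots of unity $\omega,\omega'$ with $\omega\,\omega'=k^2$ produces a point $(1:\omega:\omega')$ at which all three partials vanish, and Euler's identity then places this point on $\cC(k)$.

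The only step requiring genuine care is the case analysis when one of the coordinates vanishes; the rest is formal symmetric algebra. The key observation that makes the computation transparent is the trick of multiplying each equation by the corresponding coordinate, which converts the inhomogeneous-looking system into the symmetric chain $x^3=y^3=z^3=k\,x\,y\,z$, from which the condition $k^3=1$ drops out with essentially no further work.
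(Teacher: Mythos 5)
Your proof is correct and follows essentially the same route as the paper: reduce the vanishing of the partials to the chain $x^3=y^3=z^3=k\,xyz$ and deduce $k^3=1$, with the degenerate cases ($k=0$, a vanishing coordinate, $k=\infty$) handled separately. The only cosmetic difference is in the converse direction, where you exhibit an explicit singular point $(1:\omega:\omega')$ while the paper instead observes that for $k^3=1$ the curve factors into three lines; both are fine, and your explicit case analysis of $xyz=0$ makes precise a step the paper leaves as ``it follows easily.''
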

\smallskip

\begin{proof}
 For the curve $\cC(k)$ with $k$ finite, a singular point must satisfy
 the equations
$$ x^2=kyz~,\quad y^2=kxz\,,\quad z^2=kxy~,$$
which imply that $x^3=y^3=z^3=k\,xyz$, and hence $x^3y^3z^3= k^3x^3y^3z^3$.
For $k^3\ne 1$ with $k$ finite, it follows easily that there are no
 singularities. On the other hand, if $k^3=1$, then it is not hard to check 
that $\cC(k)$ is the union of three straight lines of the form
$\alpha x+\beta y+z=0$ with $\alpha^3=\beta^3=1$ and $\alpha\beta=k$.
Hence it is singular at the three points where two of these
lines intersect.\footnote{For $k=1$, one such intersection point $(1:1:1)$
is clearly visible to the upper right in Figure \ref{F-hess-fol}, even though 
the rest of the
two intersecting lines lie outside of the real projective plane.}
Similarly the curve $\cC(\infty)$ is clearly singular at the three points
 $(0:0:1)$, $(1:0:0)$ and $(0:1:0)$ where two of the lines $x=0$, $y=0$, 
and $z=0$ intersect. 
\end{proof}
\medskip

Thus altogether there are twelve points in $\bP^2$ which are singular
for one of these curves. If we remove
these twelve singular points and also the nine exceptional points from
$\bP^2$, then we obtain a smooth foliation by Hesse curves.

\bigskip

\begin{figure}[!ht] 
\begin{center}
\includegraphics[width=3.5in]{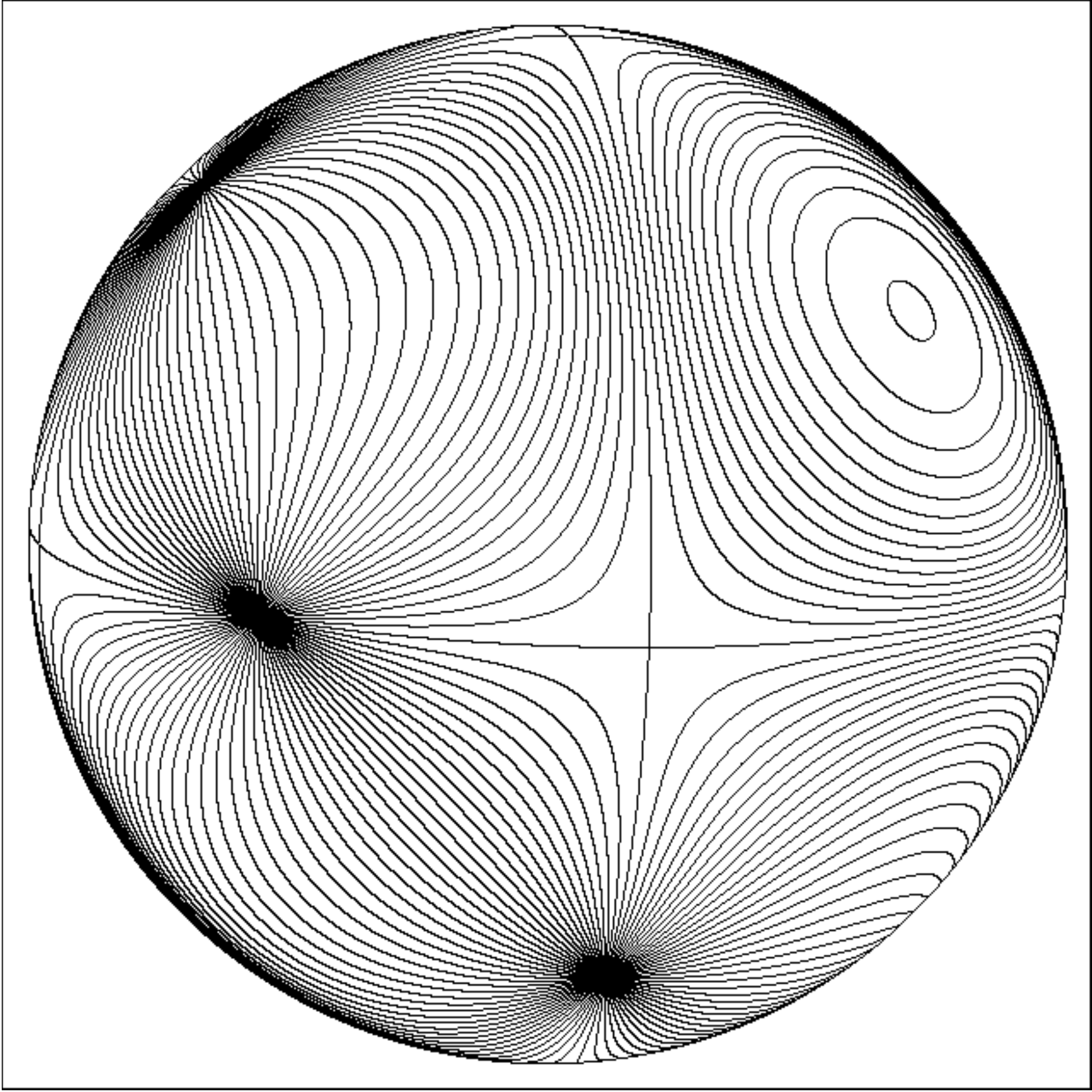}
\mycaption{\label{F-hess-fol} \sf ``Foliation'' of the real projective plane
 by the Hesse pencil 
of curves $\cC(k)_\R=\cC(k)\cap\bP^2(\R)$. 
Here $\bP^2(\R)$ is represented
as a unit sphere with antipodal points identified. Note the three exceptional
points $(-1:1:0)$, $(0:-1:1)$, and $(1:-1:0)$ where all of the 
$\cC(k)_\R$ intersect.
Also note the three singular points where the components of\break
\hbox{$\qquad\qquad\cC(\infty)_\R\,=\,\{x=0\}\cup\{y=0\}\cup\{z=0\}\qquad\qquad$} \break
intersect, and note the isolated singular point at $(1:1:1)\in \cC(1)_\R$. 
The figure has $120^\circ$ rotational symmetry about this
point. (This figure has been borrowed from our paper \cite{BDM}, which studies
rational maps preserving such cubic curves.)}
\end{center}
\end{figure}
\smallskip

\begin{definition}\label{D-aut} Let $\Aut(\bP^2)$ be the group of all 
projective 
automorphisms of $\bP^2$; and for any curve $\cC\subset\bP^2$ let
$\Aut(\bP^2,\,\cC)$ be the subgroup consisting of projective
automorphisms which map $\cC$ onto itself. 
\end{definition} \smallskip

Curves defined by the Hesse equations  (\ref{E-hnf}) are clearly highly
symmetric. Following is a precise statement.
\smallskip

\begin{lem}\label{L-hess-aut} The group $\Aut\big(\bP^2({\mathbb C})\big)$ 
contains an
 abelian subgroup $$N\cong\Z/3\oplus\Z/3$$  independent of $k$, which acts 
without fixed points on every smooth $\cC(k)$,
and acts simply transitively on the set of nine ``exceptional points'' of
equation $(\ref{E-hess-flex})$. The group $\Aut(\bP^2)$
also contains an element $\biot$ of order two, which maps each $\cC(k)$
onto itself, and such that conjugation by $\biot$
maps each element of $N$ to its inverse.
\end{lem}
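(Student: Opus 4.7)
The plan is to exhibit $N$ and $\biot$ explicitly and then verify each of the claimed properties in turn. Let $\omega = e^{2\pi i/3}$, and define
$$\sigma : (x:y:z) \mapsto (y:z:x), \qquad \tau : (x:y:z) \mapsto (x : \omega y : \omega^2 z).$$
Both are projective automorphisms of order $3$. Lifting to $GL_3(\C)$ one computes $\sigma\tau = \omega\,\tau\sigma$, so $\sigma$ and $\tau$ commute in $\Aut(\bP^2)$; they are manifestly independent, so $N = \langle\sigma,\tau\rangle \cong \Z/3\oplus\Z/3$. Since $\Phi_k$ is invariant under cyclic permutation of coordinates, and since $\Phi_k(x,\omega y,\omega^2 z)=\Phi_k(x,y,z)$ by $\omega^3=1$, both generators carry every $\cC(k)$ onto itself.

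For the action on the nine exceptional points of (\ref{E-hess-flex}), note that $\sigma$ cyclically interchanges the three families $\{(0:1:-\gamma)\}$, $\{(-\gamma:0:1)\}$, $\{(1:-\gamma:0)\}$, while $\tau$ acts on the first family by $\gamma\mapsto\omega\gamma$ (after a rescaling) and similarly on the other two. Hence $N$ acts transitively on the nine points, and since $|N|=9$ this action is simply transitive.

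The main technical step is to prove that $N$ acts freely on every smooth $\cC(k)$. Each non-identity $\eta = \sigma^a\tau^b \in N$ has order $3$ in $\Aut(\bP^2)$ and hence exactly three fixed points in $\bP^2$, found as the eigenvectors of the corresponding monomial $3\times 3$ matrix. For $a=0$ the fixed points are the coordinate vertices $(1{:}0{:}0), (0{:}1{:}0), (0{:}0{:}1)$, on which $\Phi_k = 1 \ne 0$. For $a\ne 0$, the eigenvalue equation forces $\lambda^3=1$ and the fixed points take the form $(1:\omega^i:\omega^j)$; a brief substitution shows $\Phi_k$ evaluated at such a point equals $3(1 - k\omega^c)$ for some cube root of unity $\omega^c$. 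This vanishes only when $k^3=1$, which by Lemma~\ref{L-Hsing} is precisely the singular case excluded by hypothesis. The only potential pitfall is careful book-keeping of the various cube roots; each of the eight non-identity cases reduces to the same pattern and I expect no genuine difficulty.

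Finally, define $\biot : (x:y:z) \mapsto (y:x:z)$. This is an involution in $\Aut(\bP^2)$ that preserves each $\Phi_k$ by the $x\leftrightarrow y$ symmetry of the Hesse polynomial. Direct substitution yields $\biot\,\sigma\,\biot^{-1} = \sigma^{-1}$ (both send $(x{:}y{:}z)$ to $(z{:}x{:}y)$) and $\biot\,\tau\,\biot^{-1} = \tau^{-1}$; since $N$ is abelian, conjugation by $\biot$ therefore inverts every element of $N$, which is exactly the final assertion of the lemma.
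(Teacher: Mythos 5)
Your proof is correct and follows essentially the same route as the paper: the same generators (cyclic permutation and the diagonal map $(x:y:z)\mapsto(x:\omega y:\omega^2 z)$), the same involution $x\leftrightarrow y$, and freeness on smooth $\cC(k)$ established by showing that any fixed point of a non-identity element of $N$ forces $k^3=1$. The only cosmetic difference is that you locate the fixed points as eigenvectors of monomial matrices --- note that ``order three hence exactly three fixed points'' is not automatic in $\Aut(\bP^2)$ and really rests on the eigenvalues being distinct, which your characteristic polynomial $\lambda^3-c$ does supply --- whereas the paper derives the equivalent relations $x^3=y^3=z^3=\gamma\,xyz$ directly from proportionality of coordinates.
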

\smallskip

Thus the automorphism group ${\Aut}(\bP^2,\,\cC(k))$ contains
 at least 18 elements.
In fact, we will show in Theorem \ref{T-hnf} below that any smooth cubic curve 
$\cC$ can
 be put into the form (\ref{E-hnf}), so that the automorphism
group $\Aut(\bP^2,\,\cC)$ always contains a corresponding  18 element
subgroup. (In most cases, this is the full automorphism group,
but for some special curves there are extra symmetries, as described in
Corollary \ref{C-aut2}.)
\medskip

\begin{proof}[Proof of Lemma \ref{L-hess-aut}] A cyclic permutation of the
three coordinates $x,\,y,\,z$
 clearly acts effectively on every curve of the form (\ref{E-hnf}). 
This action has just one fixed point $(1:1:1)\in\cC(1)\subset\bP^2$, but has
no fixed points in $\cC(k)$ for $k\ne 1$.
If $\gamma$ is a primitive cube root of unity, then  the transformation
$$ (x:y:z)~~~\mapsto~~~(x:\gamma \,y:\gamma^2\,z)$$
is another automorphism of order three which
 commutes with the cyclic  permutation
of coordinates. It is not difficult to check\footnote{Here is a typical
case. If $(x:y:z)=(y:\gamma z: \gamma^2 x)$ then, using the fact that\break
$(x:y)=(u:v)$ if and only if $xv=yu$, we can check that 
$x^2=\gamma yz,~y^2=\gamma xz$ and $z^2=\gamma xy$. Therefore
$x^3=y^3=z^3=\gamma xyz$, hence $k=\gamma$ and the curve is singular.}
 that the abelian group $N$
generated by these two transformations has fixed points only on the
singular Hesse curves with $k^3=1$ or $k=\infty$.
Furthermore, it acts simply transitively on the set of exceptional points 
(\ref{E-hess-flex}). Finally, the permutation 
$(x:y:z)\leftrightarrow(y:x:z)$ is an element $\biot$ of order two
 which carries each $\cC(k)$ to itself, and has the required action
on $N$. (This permutation
does have four fixed points on each $\cC(k)$, consisting of just one
 of the nine 
exceptional points, namely $(1:-1:0)$, together with the three
points of  $\cC(k)$ which lie on the line $x=y$.) 
\end{proof}
\bigskip

Although we are primarily interested in cubic curves, we will often
use results which apply to curves of any degree $n\ge 2$.
The most fundamental property of complex projective curves of specified degree
is the following:

\begin{quote}\it
For any smooth curve $\cC\subset\bP^2(\C)$ of degree $n\ge 2$ and any
line $L\subset\bP^2$, the intersection $\cC\cap L$ consists of $n$ points,
counted with multiplicity.\footnote{This is an special case of B\'ezout's 
theorem; but can also be proved just by restricting
 the defining equation $\Phi(x,y,z)=0$ to
the line $L$, and then using the Fundamental Theorem of Algebra.}
\end{quote}

\noindent Here:
\begin{itemize}
\item A transverse intersection has multiplicity one.
\item The intersection between $\cC$ and its tangent line at a generic
point has multiplicity two.
\item At certain special ``flex  points'' this tangential intersection will
have multiplicity three or more. 
\end{itemize}

\noindent For a cubic curve, note that the intersection multiplicity is three
if and only if there are no other points of intersection between
$\cC$ and $L$.\medskip

\begin{definition} For a curve $\cC$ of any degree, a non-singular point
is called an inflection point, or briefly a  \textbf{\textit{flex point}},
if the intersection multiplicity between $\cC$ and its
tangent line is three or more. (Of course in the cubic case, 
this intersection multiplicity is always precisely three, 
 unless $\cC$ contains an entire straight line, which implies that $\cC$
contains singular points.)
\end{definition}
\medskip

We will prove in Theorem \ref{T-9flex} that every smooth complex 
cubic curve has precisely nine
flex points. However, the proof will be based 
 on a more general discussion which applies
to smooth curves of any degree.

\begin{definition} Let $\Phi(x,y,z)$ be a homogeneous
polynomial of degree $n$.
The associated \textbf{\textit{Hessian determinant}}
is the  homogeneous polynomial function
\begin{equation}\label{E-hessian}
{\mathcal H}_\Phi(x,y,z)~=~  {\rm det}\!\left(\begin{matrix}
\Phi_{xx}& \Phi_{xy}& \Phi_{xz}\\ \Phi_{yx}& \Phi_{yy}& \Phi_{yz}\\
\Phi_{zx}& \Phi_{zy}& \Phi_{zz}\end{matrix}
\right),
\end{equation}
of degree $3(n-2)$, where the subscripts on the right indicate partial\break
 derivatives.
\end{definition}
\msk

\begin{theorem}\label{T-hess-d} \it Let $\cC$ be any  smooth curve
 of degree three or more with defining equation $\Phi(x,y,z)=0$. Then
the set ${\rm Flex}(\cC)$
consisting of all flex points in $\cC$ is equal to the set of all points
in $\cC$ which satisfy the homogeneous equation $\quad \cH_\Phi(x,y,z)=0~.$
\end{theorem}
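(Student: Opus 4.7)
The plan is to reduce to a local computation at a single normalized point, since both the condition of being a flex and the condition $\cH_\Phi=0$ behave well under projective change of coordinates. Concretely, if $\widetilde\Phi(\p)=\Phi(A\p)$ for an invertible linear $A$, then a direct matrix computation gives $\cH_{\widetilde\Phi}(\p)=(\det A)^2\,\cH_\Phi(A\p)$. So the locus $\{\cH_\Phi=0\}\cap\cC$ is carried by projective automorphisms to the corresponding locus on $A^{-1}(\cC)$, and likewise for flex points. It therefore suffices, given a non-singular point $\p\in\cC$, to verify the equivalence after moving $\p$ to $(0:0:1)$ and rotating so that the tangent line at $\p$ is $\{y=0\}$.

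With this normalization, write $f(x,y)=\Phi(x,y,1)$. The conditions $\p\in\cC$ and tangent $\{y=0\}$ translate to $f(0,0)=0$, $f_x(0,0)=0$, and $f_y(0,0)\ne 0$. The intersection of $\cC$ with its tangent line near $\p$ is then the zero set of the one-variable function $g(x):=f(x,0)$, which already vanishes to order $\ge 2$ at $x=0$. Hence $\p$ is a flex point exactly when
\[
g''(0)\;=\;f_{xx}(0,0)\;=\;\Phi_{xx}(0,0,1)\;=\;0.
\]

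Next I would compute the Hessian matrix at $(0,0,1)$ using Euler's identity $x\Phi_x+y\Phi_y+z\Phi_z=n\Phi$ and its derivatives. Differentiating in $x$, $y$, $z$ and evaluating at $(0,0,1)$, together with the facts $\Phi(0,0,1)=\Phi_x(0,0,1)=0$ (and $\Phi_z(0,0,1)=0$ from Euler), yields
\[
\Phi_{xz}(0,0,1)=0,\qquad \Phi_{zz}(0,0,1)=0,\qquad \Phi_{yz}(0,0,1)=(n-1)\,\Phi_y(0,0,1).
\]
Plugging these into the $3\times 3$ Hessian and expanding the determinant along the third row collapses almost everything; only one term survives, producing
\[
\cH_\Phi(0,0,1)\;=\;-(n-1)^2\,\Phi_y(0,0,1)^2\,\Phi_{xx}(0,0,1).
\]
Because $\p$ is smooth and $\{y=0\}$ is its tangent line, $\Phi_y(0,0,1)\ne 0$; hence $\cH_\Phi(\p)=0$ iff $\Phi_{xx}(0,0,1)=0$, which by the previous paragraph is exactly the flex condition.

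I expect the only genuine obstacle to be the verification of projective covariance of the Hessian in the opening step; it is a short computation, but one must be careful that a change of coordinates in $\bP^2$ really does rescale $\cH_\Phi$ by $(\det A)^2$ composed with the same change, so that the vanishing locus on $\cC$ is intrinsically defined. Once that is in hand, everything reduces to the algebraic identity above, and the theorem follows for curves of every degree $n\ge 3$ with no further work needed beyond the Euler-relation bookkeeping. A minor remark worth including: in the cubic case $n=3$, the Hessian $\cH_\Phi$ is itself a cubic, so intersecting it with $\cC$ will allow us later (in Theorem \ref{T-9flex}) to invoke B\'ezout to count the flexes.
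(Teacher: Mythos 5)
Your proposal is correct and follows essentially the same route as the paper: use the covariance identity $\cH_{\Phi\circ A}=(\det A)^2\,\cH_\Phi\circ A$ to normalize the point to $(0:0:1)$ with tangent line $y=0$, and then show that the Hessian determinant there collapses to a nonzero multiple of $\Phi_{xx}(0,0,1)$, whose vanishing is the flex condition. The only (harmless) differences are that you obtain $\Phi_{xz}=\Phi_{zz}=0$ and $\Phi_{yz}=(n-1)\Phi_y$ at $(0,0,1)$ from Euler's identity rather than from inspecting monomials, and you read off the flex condition by restricting $\Phi$ to the tangent line rather than via the implicit function $y=f(x)$; both computations check out.
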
\msk

As the first step in the proof, we must show that this locus $\cH_\Phi=0$
behaves properly under projective transformations. Let 
\begin{equation}\label{E-A}
 (x,y,z)~\mapsto A(x,y,z) = (u,v,w)
\end{equation}
be a non-singular linear transformation, and let $A_*:\bP^2(\C)\to\bP^2(\C)$
be the induced projective transformation. Note that  $A_*$ maps
the curve $\cC$ defined by the equation $\Phi(x,y,z)=0$ to the curve 
$\cC'=A_*(\cC)$ defined by the equation $\Psi(u,v,w)=0$, where
 $\Psi=\Phi\circ A^{-1}$ (or equivalently $\Phi=\Psi\circ A$),
as one sees from the diagram
$$
\xymatrix@R=3.8pc@C=3.8pc@M=0.2pc{{\mathbb C}^3 \ar[r]^A \ar[rd]^\Phi &{\mathbb C}^3\ar[d]^\Psi\\ &{\mathbb C}~.} $$

\begin{lem}\label{L-hess}
With these notations, $A_*$ maps the curve defined by the equation
$\cH_\Phi(x,y,z)=0$ to the curve defined by $\cH_\Psi(u,v,w)=0$. In particular,
it maps the locus of points on $\cC$ with $\cH_\Phi(x,y,z)=0$ to the locus
 of points on $A_*(\cC)$ satisfying $\cH_\Psi(u,v,w)=0$.
\end{lem}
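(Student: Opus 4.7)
The plan is to prove the lemma by a direct chain-rule computation showing that the Hessian matrix transforms by a congruence, so that the Hessian determinant transforms by $(\det A)^2$ under precomposition with $A$. Since $A$ is non-singular, this multiplicative factor is nowhere zero, so the zero loci correspond exactly under $A_*$.

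First I would write $\Phi = \Psi \circ A$ and apply the chain rule. If we let $M$ denote the $3\times 3$ matrix of $A$ (so that $A(x,y,z)^T = M(x,y,z)^T$), then the gradient satisfies
\begin{equation*}
\nabla \Phi(x,y,z) ~=~ M^T \, (\nabla \Psi)\bigl(A(x,y,z)\bigr).
\end{equation*}
Differentiating once more and recalling that the $(i,j)$ entry of the Hessian matrix is $\partial^2 \Phi / \partial x_i \partial x_j$, one obtains the matrix identity
\begin{equation*}
\bigl(\Phi_{x_i x_j}(x,y,z)\bigr) ~=~ M^T \,\bigl(\Psi_{u_i u_j}(A(x,y,z))\bigr)\, M.
\end{equation*}
Taking determinants gives
\begin{equation*}
\cH_\Phi(x,y,z) ~=~ (\det M)^2 \, \cH_\Psi\bigl(A(x,y,z)\bigr).
\end{equation*}

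Since $\det M \ne 0$, it follows that $\cH_\Phi(x,y,z) = 0$ if and only if $\cH_\Psi(u,v,w) = 0$, where $(u,v,w) = A(x,y,z)$. Translating to projective coordinates, this says precisely that $A_*$ carries the locus $\{\cH_\Phi = 0\} \subset \bP^2$ bijectively onto $\{\cH_\Psi = 0\} \subset \bP^2$. Intersecting with the respective curves $\cC$ and $A_*(\cC)$ then yields the second assertion, since $A_*$ already maps $\cC$ bijectively onto $A_*(\cC)$ by construction.

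The only real step requiring any care is the bookkeeping in the chain rule — making sure one gets $M^T H M$ rather than $M H M^T$, and that the Hessian of $\Psi$ is evaluated at the transformed point $A(x,y,z)$ and not at $(x,y,z)$. Once the congruence identity is in hand, the determinant computation and the conclusion are immediate; there is no genuine obstacle.
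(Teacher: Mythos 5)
Your proof is correct and follows essentially the same route as the paper: both arguments reduce to the congruence identity $\M_\Phi(\X)=\A^t\,\M_\Psi(\A\X)\,\A$ for the Hessian matrices and then take determinants to get $\cH_\Phi=(\det A)^2\,\cH_\Psi\circ A$, from which the statement about zero loci is immediate. The only difference is that you derive the congruence by differentiating the chain rule identity for gradients, whereas the paper reads it off by comparing the quadratic terms of the Taylor expansions of $\Phi$ and $\Psi=\Phi\circ A^{-1}$; both are standard and equally valid.
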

\smallskip

\begin{proof}
For this proof only, it will be convenient to switch to matrix\break notation,
representing a point in $\C^3$ by a column vector $\bf X$, and writing a
linear change of coordinates $A:\C^3\to\C^3$ as 
$$\X~~\mapsto~~ \A \X =\U\qquad{\rm where}\qquad
 \X =\left[\begin{matrix}x\\y\\z\end{matrix}\right]\qquad{\rm and}\qquad
\U=\left[\begin{matrix}u\\v\\w\end{matrix}\right]~,$$
and where $\A$ can be any non-singular $3\times 3$ matrix.
Let $\M_\Phi(\X )$ be the matrix of second partial derivatives of $\Phi$,
with determinant $\cH_\Phi(\X )$, and define $\M_\Psi(\U)$ with determinant
$\cH_\Psi(\U)$ similarly. Then we will prove that
\begin{equation}\label{E-hess2}
\M_\Phi(\X ) ~=~\A^t\,\M_\Psi(\A \X )\,\A~,
\end{equation}
where $\A^t$ is the transpose matrix.

To prove (\ref{E-hess2}), note that the Taylor series for $\Phi$ at a point
$\X _0$ has the form $$\Phi(X_0+X)~=~\Phi(X_0)~+~({\rm linear~terms})~+~
({\rm quadratic~terms})~+~({\rm higher~order~terms})~,$$
where the quadratic terms can be written as ${\textstyle\frac{1}{2}}\X ^t
\, \M_\Phi(\X _0)\,\X $.
 If we ignore all terms of degree other than two, then this can be 
written briefly as
\begin{equation}\label{E-mat}
 \Phi(\X _0+\X )~=~\cdots~+~{\textstyle\frac{1}{2}}\X ^t\, \M_\Phi(\X _0)\, \X ~+~\cdots~.
\end{equation}
Similarly
$$\Psi(\U_0+\U)~=~\cdots~+~{\textstyle\frac{1}{2}}\U^t\, \M_\Psi(\U_0)\, \U~+~\cdots~.$$
Now substituting $\A \X $ for $\U$ and $\A \X _0$ for $\U_0$, and
recalling that\break $\Phi=\Psi\circ \A$, this last equation takes the form
$$\Phi(\X _0+\X )~=~\Psi(\U_0+\U)~=~
\cdots~+~{\textstyle\frac{1}{2}}\X ^t\A^t\, \M_\Psi(\A \X _0)\,\A \X ~+~\cdots~.$$
Comparing this expression with (\ref{E-mat}), and noting that the two equations
must agree for all $\X $, the required equation (\ref{E-hess2}) follows.

Now taking the determinant of both sides of (\ref{E-hess2}) 
and switching back to non-matrix notation, we obtain the identity
\begin{equation}\label{E-hess-ch}
 \cH_\Phi(x,y,z)~=~{\rm det}(A)^2\cH_\psi(u,v,w)~,\quad{\rm where}\quad
(u,v,w)=A(x,y,x)~. \end{equation}
Lemma \ref{L-hess} now follows easily, since the constant factor
${\rm det}(A)^2$ does not affect the induced transformation of projective
 space. \end{proof}
\medskip

\begin{proof}[Proof of Theorem \ref{T-hess-d}]
We must show that a point $(x_0:y_0:z_0)\in\cC$ is a flex point if and only if
$\cH_\Phi(x_0,y_0,z_0)=0$. Choose a linear change of coordinates which maps
the given point $(x_0,y_0,z_0)$ to $(0,0,1)$. After a rotation of the
 $x,y$ coordinates, we may assume that the image curve is tangent to the
line $y=0$. Now, working with affine coordinates\break $(x:y:1)$, 
we can solve locally for $y$ as a smooth function \hbox{$y=f(x)$}, where the
derivative $dy/dx=f'(x)$ vanishes for $x=0$. Differentiating the equation
$\Phi(x,\,f(x),\,1)=0$ twice, we obtain
$$ \Phi_x+\Phi_y f'(x)=0\qquad {\rm and}\qquad
\Phi_{xx} + 2\Phi_{xy}f'(x)+\Phi_{yy}(f'(x))^2+\Phi_yf''(x)=0 $$
along the curve, where $\Phi_x(0,0,1)=0$ but $\Phi_y(0,0,1)\ne 0$.
In particular, at the specified point with $x=f'(x)=0$, we see that
$$ \Phi_{xx}=0\quad\Longleftrightarrow\quad f''(x)=0\quad
\Longleftrightarrow\quad x~~{\rm is~a~flex~point}~.$$

To finish the proof of Theorem \ref{T-hess-d}, we must show,
 with a choice of coordinates as above,
that $\Phi_{xx}(0,0,1)=0$ if and only if $\cH_\Phi(0,0,1)=0$.
Note that $\Phi(x,y,z)$ can be written uniquely as a sum of monomials
$c_{ijk}x^iy^jz^k$ with $i+j+k=n$. Since $\Phi(0,0,1)=0$, we know that the
coefficient of $z^n$ is zero, and it follows easily that $\Phi_{zz}(0,0,1)=0$.
Similarly, since $\Phi_x(0,0,1)=0$, it follows easily that $\Phi_{xz}(0,0,1)=0$;
but $\Phi_y(0,0,1)\ne 0$ hence $\Phi_{yz}(0,0,1)\ne 0$. It is now
 straightforward to check that
Hessian determinant reduces to $\cH_\Phi=-\Phi_{xx}\Phi_{yz}^{\,2}$ 
at the point $(0,0,1)$, and the conclusion follows.
\end{proof}
\medskip

\begin{rem}\label{R-y''}
More explicitly,  whenever
 $\Phi_y(0,0,1)\ne 0$ so that $y$ can be expressed locally as a
 smooth function $y=f(x)$, we have the identity
\begin{equation*}
 \cH_\Phi(0,0,1)~=~4\,\Phi_y^{\,3}(0,0,1)\,f''(0)~.
\end{equation*}
In the case where $f'(0)=0$, this follows from
the proof above, together with the observation that
 $\Phi_{yz}(0,0,1)=2\,\Phi_y(0,0,1)$. (This
last equality can be checked by comparing the first derivative of
the monomial $yz^2$ with  respect to  $y$, and the second derivative
 with respect to $y$ and $z$.) 

The more
general case where $f'(x)\ne 0$ can be dealt with by noting that the change of
coordinates $$(x,y,z)\mapsto(x,\, ax+y,\,z)$$ clearly does not affect
$d^2y/dx^2$, 
and by noting that the locus $\cH_\Phi=0$ transforms by 
equation~(\ref{E-hess-ch}).

Even more generally we can write
\begin{equation}\label{E-H=}
\cH_\Phi(x,f(x),1) ~=~4\,\Phi_y^{\,3}(x,f(x),1)\,f''(x)
\end{equation}
at any point of the curve where $\Phi_y\ne 0$,
since both sides of this equation are invariant under translation.

\end{rem}
\medskip

\begin{theorem}\label{T-9flex} \it
Every smooth cubic curve in $\bP^2(\C)$ has nine flex points.
\end{theorem}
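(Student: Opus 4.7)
First I would observe that when $\Phi$ has degree $n=3$, its Hessian $\cH_\Phi$ is homogeneous of degree $3(n-2)=3$, so $\{\cH_\Phi=0\}$ is itself a plane cubic. By Theorem~\ref{T-hess-d}, the set of flex points of $\cC$ is exactly the intersection $\cC\cap\{\cH_\Phi=0\}$. The plan is therefore to apply the general form of B\'ezout's theorem to these two cubics, which yields $3\cdot 3=9$ intersection points counted with multiplicity, and then to check that each intersection has multiplicity one.

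Next I must verify that $\cC$ and the Hessian curve share no common component, so that B\'ezout applies. A smooth cubic is irreducible: any nontrivial factorization would express $\cC$ as a union of lower-degree curves whose pairwise intersections (forced by B\'ezout on a line against a conic, or on two lines) would produce singular points of $\cC$. Thus the only way $\cC$ and $\{\cH_\Phi=0\}$ could share a component would be if $\cH_\Phi$ vanished identically on $\cC$. But in that case equation~(\ref{E-H=}) would force $f''(x)\equiv 0$ on every local affine piece $y=f(x)$ of the curve, making $f$ affine on an open interval; by polynomial identity a whole projective line would then sit inside $\cC$, contradicting irreducibility.

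The heart of the argument is to show that each of the nine intersections is transverse. Fix a flex point $p\in\cC$ and, after a projective change of coordinates, place it at $(0:0:1)$ with tangent line $y=0$ and $\Phi_y(p)\ne 0$. Because $\cC$ is a cubic, its tangent line already meets $\cC$ in a total of only three points counted with multiplicity, so the flex condition saturates this count: $\Phi(x,0,1)=c\,x^3$ for some $c\ne 0$. Solving $\Phi(x,f(x),1)=0$ for the analytic branch $y=f(x)$ then gives $f(x)=-\bigl(c/\Phi_y(p)\bigr)\,x^3+O(x^4)$, so $f''(x)$ has a simple zero at $x=0$. By equation~(\ref{E-H=}) the restriction $\cH_\Phi|_{\cC}$ also has a simple zero at $p$ in the local uniformizer $x$, so the intersection multiplicity of the two cubics at $p$ equals one. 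Summing over the nine points produced by B\'ezout gives exactly nine distinct flex points.

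The main obstacle is the transversality step: one needs every intersection of $\cC$ with its Hessian curve to have multiplicity one, not merely to have at least one flex at each such point. What rescues us is a feature peculiar to cubics, namely that no ``higher-order'' flexes can exist (the tangent line already meets the curve in a total of three points), which keeps the local expansion of $f$ clean enough that the Hessian's vanishing on $\cC$ is exactly simple at each flex.
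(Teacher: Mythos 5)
Your proof is correct and follows essentially the same route as the paper: B\'ezout applied to $\cC$ and its degree-three Hessian curve, with transversality at each flex coming from the fact that a cubic admits no higher-order flexes, so that $f''$ (equivalently $\cH_\Phi|_\cC$ via equation (\ref{E-H=})) vanishes only to first order there. Your explicit check that the two cubics share no common component is a point the paper leaves implicit, but the argument is otherwise the same.
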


\begin{proof} If a curve $\cC_1$ of degree $n_1$ and a curve
$\cC_2$ of degree $n_2$  have only smooth transverse intersections, then
it follows from B\'ezout's Theorem that the 
number of intersection points is precisely equal to the product $n_1n_2$.
(See for example \cite[Section~6.1]{BK} or  \cite[pp. 36, 54]{Ha}.) 
We are intersecting two curves
$\{\Phi=0\}$ and $\{\cH_\Phi=0\}$ which both have degree three.
Thus to prove Theorem  \ref{T-9flex},  we need only show
 that these two curves have only smooth transverse intersections.\footnote{
The curve $\{\cH_\Phi=0\}$ may have singular points, but is always
non-singular near the intersection.}

Since  equation (\ref{E-H=}) holds at all points of $\cC$ with 
$\Phi_y\ne 0$, we can differentiate with respect to $x$ to obtain
$$\frac{\partial\cH_\Phi}{\partial x}(0, 0,1)
~=~\Phi_y^{\;3}(0, 0,1)\,f'''(x) $$
whenever $f'(0)=0$. But at a flex point of a smooth cubic curve,
where $f''(0)=0$, the third derivative  $f'''(0)$ can never vanish,
  for this would imply that the entire tangent line at that point would
have to be contained in $\cC$. Thus $\partial\cH_\Phi/\partial x\ne 0$
at a flex point; and
it follows easily that the two curves $\{\cH_\phi=0\}$ and $\{\Phi=0\}$ have a
 transverse intersection at every flex point. Thus every smooth cubic
 curve has exactly nine flex points; which completes the proof
of Theorem \ref{T-9flex}.\end{proof}
\bsk

\begin{rem}\label{R-hess-flex}
In the special case of a smooth Hesse curve
 $\cC(k)$, the nine flex points coincide
with the nine ``exceptional points'' of equation (\ref{E-hess-flex}).
To see this,
taking $~\Phi(x,y,z)=x^3+y^3+z^3-3\,k\,x\,y\,z$, note for example that
 $\Phi_{xx}=6\,x$
and $\Phi_{xy}=-3\,k\,z$. A straightforward computation shows that the Hessian
determinant is given by 
$$\cH_\Phi(x,y,z)~=~3^3\big((8-2\,k^3)x\,y\,z~-~2k^2(x^3+y^3+z^3)\big)~.$$
If $\Phi(x,y,z)=0$, then we can substitute $3kxyz$ for $x^3+y^3+z^3$ on the
 right side of this equation. This yields $\cH_\Phi(x,y,z)=6^3(1-k^3)xyz$.
If we are in the non-singular case, with $k^3\ne 1$, then it follows that
$\Phi$ and $\cH_\Phi=0$ are both zero only at the nine points with
$$x^3+y^3+z^3~=~x\,y\,z~=~0~,$$
as in (\ref{E-hess-flex}). (On the other hand, if $k^3=1$ then the
Hessian is identically zero on $\cC(k)$, which means that $\cC(k)$ is
a union of straight lines.)

The set of nine flex points together
with the twelve lines joining them form a fascinating configuration. 
(Compare Figure \ref{F-flexgrid} and Remark \ref{R-Hconfig}.)
\end{rem}
\medskip

\subsection*{\bf Reduction to Hesse Normal Form.}
 Recall that, two algebraic varieties 
in a projective space ${\mathbb P}^n$  are 
\textbf{\textit{projectively equivalent}} 
if there exists a projective automorphism of ${\mathbb P}^n$
 which carries one variety onto the other.
\msk

 The following result is taken from a textbook
 published by Heinrich Weber in 1898. (See \cite[v.3, p.22]{Web}. 
We don't know whether this result was known earlier.)
\ssk 

\begin{theo}\label{T-hnf}
\it Every smooth cubic curve in
$\bP^2(\C)$ is projectively equivalent to a curve $\cC(k)$ in the Hesse normal
form
$$x^3+y^3+z^3~=~3\,k\,x\,y\,z~,$$
with $k\in {\mathbb C}$, $k^3\ne 1$.
\end{theo}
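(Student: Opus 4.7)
Plan: The strategy is to use Theorems~\ref{T-9flex} and~\ref{T-hess-d} to build a natural pencil of cubics through the nine flex points of $\cC$, and to identify this pencil with the Hesse pencil after a projective change of coordinates. Since $\cC$ and the Hessian curve $\{\cH_\Phi=0\}$ meet transversely in the nine flex points (by the proof of Theorem~\ref{T-9flex}), the one-parameter family $\lambda\Phi+\mu\,\cH_\Phi=0$ consists of cubics all passing through these nine points, with $\cC$ as one member.

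First I would locate a totally reducible member of this pencil: a cubic factoring as three lines in general position (a triangle). The existence of such a member follows from a discriminant analysis of the pencil --- the locus of singular cubics in a pencil of cubics is a divisor on the $\bP^1$ parameter space, and the geometry of the nine base points forces at least one singular fiber to be of triangular type (rather than merely nodal or cuspidal). Next I would apply a projective transformation bringing this triangle to the standard triangle $\{xyz=0\}$. The pencil then takes the form $\lambda\Phi'+\mu\,xyz$ for some cubic $\Phi'$, and the nine flex points lie on the three coordinate lines with exactly three points on each line (no four on one line, since a line meets $\cC$ in only three points by B\'ezout).

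Finally, using the residual two-dimensional diagonal torus $(x:y:z)\mapsto(\alpha x:\beta y:\delta z)$ of projective automorphisms preserving $\{xyz=0\}$, I would normalize the nine flex points to the standard configuration $(0:1:-\gamma^j),\ (-\gamma^j:0:1),\ (1:-\gamma^j:0)$ with $j=0,1,2$ and $\gamma=e^{2\pi i/3}$. This forces $\Phi'(x,y,z)=x^3+y^3+z^3$ up to an overall scalar, so the pencil becomes $\lambda(x^3+y^3+z^3)+\mu\,xyz$, the Hesse pencil; thus $\cC$ is realized as a member $\cC(k)$, and $k^3\ne 1$ follows from smoothness via Lemma~\ref{L-Hsing}.

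The main obstacle is the last normalization step: showing that the two-dimensional diagonal torus suffices to align the nine flex points --- which lie a priori at arbitrary positions on the three coordinate lines --- with the precise cube-root-of-unity Hesse pattern. This requires extracting compatibility constraints among the triples of flex points on the three sides of the triangle, which one obtains by observing that each flex point on one side is forced to be the third intersection of $\cC$ with a line through two flex points on the other two sides. These linking constraints are exactly what make the configuration projectively rigid modulo the diagonal torus, allowing the three independent scalings to simultaneously achieve the cube-root normalization on all three sides.
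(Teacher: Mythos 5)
Your strategy---passing to the pencil $\lambda\Phi+\mu\,\cH_\Phi$ through the nine flex points, degenerating to a triangle, and normalizing to the Hesse pencil---is a legitimate classical route (essentially the one in \cite{AD}), and it is genuinely different from the paper's proof, which is far more elementary: the paper picks just \emph{two} flex points, chooses coordinates so that $z=0$ joins them while $x=0$ and $y=0$ are the respective tangent lines, observes that the defining polynomial is then forced to be $z^3+xy(ax+by)+cxyz$ with $a,b\ne0$, and converts $xy(ax+by)$ into a sum of two cubes by an explicit algebraic identity. No pencil, no nine-point configuration, and no degeneration argument are needed there.

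As written, however, your argument has two genuine gaps, and they sit exactly at its load-bearing points. First, the existence of a totally reducible (three-line) member of the pencil is asserted but not proved. A ``discriminant analysis'' only says that the singular members form a divisor of degree twelve on the parameter $\bP^1$; there is no numerical obstruction to all twelve being nodal, so nothing in what you wrote rules out that every singular member is irreducible or a conic plus a line. The standard way to force a triangle is to show that an irreducible singular cubic cannot contain the nine base points (its smooth locus is a group isomorphic to $\C^*$ or $\C$, which has too little $3$-torsion to carry the collinearity relations among the base points) and then to exclude conic-plus-line members; none of that work appears, and it is where the real content of this approach lies. Second, your final normalization rests on the claim that the line through a flex point on $\{x=0\}$ and a flex point on $\{y=0\}$ meets $\cC$ in a third flex point. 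That collinearity property of flexes is true, but you do not prove it, and it is not available from anything established before Theorem \ref{T-hnf} in the paper---it is derived only later (Remark \ref{R-flexbase}) from the group law, whose development would have to be rearranged to avoid circularity. Without it, placing the single triangle at $\{xyz=0\}$ constrains the three base points on each coordinate line only up to one product relation, and the two-parameter diagonal torus cannot reach the cube-root-of-unity pattern: the multiplicative rigidity you need comes precisely from those unproved linking constraints. Both gaps can be filled, but doing so makes this proof substantially heavier than the half-page computation the paper actually uses.
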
\medskip

{\bf Proof.} Choose two distinct flex points for the given curve $\cC$,
 and choose homogeneous coordinates $x,\,y,\,z$ so that:

\begin{itemize}
\item[$\bullet$] the line $x=0$ is tangent to $\cC$ at the first flex point,

\item[$\bullet$] the line $y=0$ is tangent at the second flex point, and

\item[$\bullet$] the line $z=0$ passes through both flex points. 
\end{itemize}

\noindent Working in affine coordinates with $z=1$, these conditions mean
that $\cC$ has no finite point on the lines $x=0$ or $y=0$. In other words
the polynomial function $\Phi(x,y,1)$ must take a non-zero constant value
on these two lines; say $\Phi(x,y,1)=1$ whenever $xy=0$. Hence it must have the
form $\Phi(x,y,1)= 1+xy(ax+by+c)$. In homogeneous coordinates, this means that
$$ \Phi(x,y,z)~=~ z^3 \,+\,xy(ax+by) +cxyz~.$$
Furthermore $a\ne 0$ since otherwise $(1:0:0)$ would be a singular point,
and $b\ne 0$ since otherwise $(0:1:0)$ would be singular.
Now to put this polynomial in the required form, we must express $xy(ax+by)$
as a sum of two cubes.\footnote{More generally, any smooth cubic locus
$\Psi(x,y)=0$ in $\bP^1(\C)$ is just a union of three
distinct points, and it is not hard to choose a projective equivalence
(= fractional linear transformation) from the Riemann sphere $\bP^1(\C)$
to itself which carries one such triple to any other.}
 In fact, consider the identity
$$ (\gamma\,p\,x+q\,y)^3\,+\,(-p\,x-\gamma\,q\,y)^3~=~  3i\sqrt{3}\,
xy\big(-p^2q\,x\,+\,pq^2\,y\big)\,, $$
where $\gamma=(-1+i\sqrt{3})/2$. It is not difficult
to choose $p$ and $q$ so as to satisfy the required equalities
$$  -3i\sqrt{3}\,p^2q=a\qquad{\rm and}
\qquad 3 i\sqrt{3}\,pq^2=b~,$$
since we can first solve for $p/q=-a/b$, and then solve for $p$. \qed
\bigskip

\begin{rem}\label{R-not-unique}
According to Lemma~\ref{L-Hsing}, a curve in Hesse form, with $k$ finite,
 is smooth if and only if $k^3\ne 1$. This Hesse form is not unique, since
there are several different ways of choosing the two flex points. We will see
in Theorem \ref{k-sym} that, for a generic choice of the smooth curve $\cC$,
 there are twelve different possible choices
of the parameter $k$.
\end{rem}\ssk

\begin{coro}\label{C-aut} Every smooth complex cubic curve possesses an 
automorphism group of order at least 18 which acts transitively on its set of
 nine  flex points.\end{coro}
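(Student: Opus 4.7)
The plan is to assemble the corollary from three results already in hand: Theorem~\ref{T-hnf} (reduction to Hesse form), Lemma~\ref{L-hess-aut} (the 18 explicit symmetries of a Hesse curve, with the subgroup $N$ acting simply transitively on the nine exceptional points), and Remark~\ref{R-hess-flex} (on a smooth Hesse curve the nine exceptional points are exactly the nine flex points). The whole statement is essentially a ``transport of structure'' along a projective equivalence, so there is no serious obstacle — the only small point that needs a word of justification is that such a transport actually carries flex points to flex points.

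First, given an arbitrary smooth cubic $\cC\subset\bP^2(\C)$, invoke Theorem~\ref{T-hnf} to produce a projective automorphism $A_*\in\Aut(\bP^2)$ such that $A_*(\cC)=\cC(k)$ for some $k\in\C$ with $k^3\ne 1$. Conjugation $\sigma\mapsto A_*\sigma A_*^{-1}$ then gives a group isomorphism
$$
\Aut\bigl(\bP^2,\,\cC\bigr)~\xrightarrow{\;\cong\;}~\Aut\bigl(\bP^2,\,\cC(k)\bigr),
$$
so it suffices to prove the statement for $\cC(k)$. By Lemma~\ref{L-hess-aut}, the right-hand group contains the subgroup $\langle N,\,\biota\rangle$; since $\biota\notin N$ (because $N$ acts freely on $\cC(k)$ while $\biota$ has fixed points on it) and conjugation by $\biota$ inverts $N$, this subgroup is a semidirect product of order $9\cdot 2=18$.

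Next, to see transitivity on flex points, use Remark~\ref{R-hess-flex}: on the smooth curve $\cC(k)$ the nine flex points are exactly the nine exceptional points of~(\ref{E-hess-flex}), and Lemma~\ref{L-hess-aut} says $N$ already acts simply transitively on that nine-point set. Hence the 18-element subgroup acts transitively on ${\rm Flex}(\cC(k))$.

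Finally, to transport this back to $\cC$, one needs that $A_*$ carries ${\rm Flex}(\cC)$ bijectively onto ${\rm Flex}(\cC(k))$. This is immediate from Lemma~\ref{L-hess}: if $\Phi=0$ defines $\cC$ and $\Psi=\Phi\circ A^{-1}$ defines $\cC(k)$, then $A_*$ maps $\{\cH_\Phi=0\}$ to $\{\cH_\Psi=0\}$, and by Theorem~\ref{T-hess-d} these are precisely the flex loci. Therefore the 18-element subgroup of $\Aut(\bP^2,\cC(k))$ pulls back to an 18-element subgroup of $\Aut(\bP^2,\cC)$ whose action on ${\rm Flex}(\cC)$ is conjugate, via the bijection induced by $A_*$, to the transitive action already exhibited — and so it too is transitive. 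This completes the proof.
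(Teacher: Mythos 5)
Your proof is correct and follows essentially the same route as the paper's, which simply cites the combination of Lemma~\ref{L-hess-aut}, Remark~\ref{R-hess-flex}, and Theorem~\ref{T-hnf}; you have merely spelled out the transport-of-structure step (conjugation by $A_*$ and the fact that projective equivalences preserve flex points via Lemma~\ref{L-hess} and Theorem~\ref{T-hess-d}) that the paper leaves implicit. No gaps.
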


\begin{proof} This follows immediately by
 combining Lemma~\ref{L-hess-aut} and Remark~\ref{R-hess-flex} with 
Theorem~\ref{T-hnf}.
\end{proof}

(For a more precise description of the automorphism group, see\break Corollary
\ref{C-aut2}.)

\setcounter{lem}{0}
\section{The Standard Normal Form.}\label{S-class}  
Recall that a curve in standard normal form is defined by the equation
$$ y^2~=~x^3+ax+b$$
in affine coordinates $(x:y:1)$. Equivalently, using homogeneous coordinates
$(x:y:z)$, it is defined by the equation $\Phi=0$ where
\begin{equation}\label{E-snf}
\Phi(x,y,z)~=~ -y^2z+x^3+axz^2+bz^3~.
\end{equation}
One virtue of this normal form is that is useful over many different
 fields.\footnote{More precisely, one can reduce to this normal form over any
field of characteristic other than two or three.}
For our purposes, the following level of generality will be convenient.
\ssk

Let $\bF\subset\C$ be any subfield of the complex numbers. A curve $\cC
\subset\bP^2(\C)$ is said to be \textbf{\textit{defined over $\bF$}}
if it is defined by a homogeneous polynomial
equation $\Phi(x,y,z)=0$  with coefficients in $\bF$. Similarly, an
$\bF$-{\textbf{\textit{projective transformation}}} will mean a projective
 transformation with coefficients in $\bF$, or equivalently one which
 maps the projective space $\bP^2(\bF)$ onto itself.

The notation $\cC_\bF\subset\bP^2(\bF)$ will be used for the intersection
$\cC\cap\bP^2(\bF)$, 
consisting  of all points of $\cC$ with coordinates in $\bF$.
\smallskip

{\bf Caution.} In this generality,
 there is no guarantee that $\cC_\bF$ will have any
points at all. For example, the equation 
 $3x^3+4y^3+5z^3=0$ has no non-zero solution with $x,y,z$ in the field
of rational numbers $\Q$. In other words, the corresponding locus 
$\cC_\Q\subset\bP^2(\Q)$ is vacuous. (See \cite[p.~85]{Cas}.) 
\ssk

\begin{theo}\label{T-Nag} \it 
Let $\bF\subset\C$ be any subfield of the complex numbers, and let 
$\cC\subset\bP^2(\C)$ be an irreducible 
cubic curve, defined by a homogeneous equation  
\hbox{$\Phi=0$} with coefficients in $\bF$.
Then $\cC$ is $\bF$-projectively equivalent to a curve in the standard 
normal form $(\ref{E-snf})$ if and only if the set of non-singular 
 points in $\cC_\bF$ contains a flex point.
\end{theo}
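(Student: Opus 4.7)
The plan is to treat the two implications separately, with the ``only if'' direction essentially immediate and the ``if'' direction carried out by an explicit algorithmic reduction over $\bF$.

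For the ``only if'' direction, I would simply verify that the curve $\Phi = -y^2z + x^3 + axz^2 + bz^3 = 0$ has a non-singular flex point at $(0:1:0)$, which lies in $\bP^2(\bF)$ regardless of $\bF$. A direct computation of partial derivatives shows $\Phi_x(0,1,0) = \Phi_y(0,1,0) = 0$ and $\Phi_z(0,1,0) = -1 \ne 0$, so the point is non-singular with tangent line $z = 0$; substituting $z = 0$ into $\Phi$ gives $x^3$, a triple zero at $(0:1:0)$, confirming the flex property. Any $\bF$-projective equivalence pulls this flex point back to a flex point of $\cC$ in $\cC_\bF$.

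For the ``if'' direction, let $\p \in \cC_\bF$ be a flex point. Since $\Phi$ has coefficients in $\bF$ and $\p$ has coordinates in $\bF$, the tangent line at $\p$ is defined over $\bF$. Choose an $\bF$-projective transformation (extend $\p$ and another $\bF$-point on the tangent to an $\bF$-projective frame) carrying $\p$ to $(0:1:0)$ and the tangent line to $\{z=0\}$. Writing $\Phi = \sum c_{ijk} x^i y^j z^k$, the conditions $\Phi(0,1,0) = 0$, $\Phi_x(0,1,0) = 0$, and $\Phi_y(0,1,0) = 0$ force $c_{030} = c_{120} = 0$, while $\Phi_z(0,1,0) = c_{021} \ne 0$. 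Since the line $z=0$ meets $\cC$ only at $(0:1:0)$ with multiplicity three, the polynomial $\Phi(x,1,0) = c_{300}x^3 + c_{210}x^2$ must reduce to $c_{300} x^3$, giving $c_{210} = 0$, and irreducibility forces $c_{300} \ne 0$ (otherwise $z$ would divide $\Phi$).

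At this point the affine equation $\Phi(x,y,1) = 0$ reads
\[ c_{021} y^2 + (c_{111}x + c_{012}) y + (c_{300} x^3 + c_{201} x^2 + c_{102} x + c_{003}) = 0, \]
and I would complete the square in $y$ (legitimate since $c_{021} \ne 0$ and $\mathrm{char}(\bF) = 0$) via the $\bF$-linear substitution $y \mapsto y - (c_{111}x + c_{012})/(2c_{021})$, then translate $x$ by $A/(3c_{300}) \in \bF$ to eliminate the $x^2$ term. The equation becomes $\alpha y^2 = \beta x^3 + \gamma x + \delta$ with $\alpha, \beta \in \bF^*$ and $\gamma, \delta \in \bF$. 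All of these steps correspond to $\bF$-linear projective transformations fixing $(0:1:0)$ and the line $z=0$.

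The step I would flag as the main subtlety is the final rescaling to match the leading coefficients: over a general subfield $\bF$ one cannot freely extract square or cube roots, so scaling $x$ alone or $y$ alone may fail. The point to emphasize is that one has \emph{two} independent scalings $x \mapsto \nu x$, $y \mapsto \mu y$ available, producing the single equation $\beta \nu^3 = \alpha \mu^2$, which admits the explicit $\bF$-solution $\nu = \mu = \alpha/\beta$ (since $\beta(\alpha/\beta)^3 = \alpha^3/\beta^2 = \alpha(\alpha/\beta)^2$). Dividing through yields $Y^2 = X^3 + aX + b$ with $a, b \in \bF$, completing the reduction to the standard normal form via an $\bF$-projective equivalence.
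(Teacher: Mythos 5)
Your proposal is correct and follows essentially the same route as the paper: move the flex point to $(0:1:0)$ and its tangent to $z=0$ by an $\bF$-projective map, then complete the square in $y$, translate $x$, and handle the one genuine subtlety (no square or cube roots available in $\bF$) by the same device the paper uses, namely a simultaneous equal scaling of $x$ and $y$ (your $\nu=\mu=\alpha/\beta$ is the paper's "unique choice with $\alpha=\beta$", just applied at the end rather than the beginning). Your explicit justification that $c_{210}=0$ and $c_{300}\ne 0$ (via the multiplicity-three intersection with $z=0$ and irreducibility) is if anything slightly more careful than the paper's "it is easy to check".
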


\begin{rem}\label{R-nag} 
This is a much easier variant of  Trygve Nagell's \hbox{Theorem,}
which can be stated as follows:
\begin{quote} {\sl Given 
a smooth complex cubic $\cC$ which is defined over $\bF$, and given an
arbitrary point $\p\in\cC_\bF$, there is an $\bF$-birational equivalence
 between $\cC$ and some  curve in standard normal form which takes $\p$ to the
 flex point at infinity.}
\end{quote}
 See \cite{Nag}, as well as \cite[p. 34]{Cas}
which implicitly includes a brief proof of the above
 Theorem \ref{T-Nag}. For further discussion, see Remark \ref{R-birat} below.
\end{rem}

\begin{proof}[Proof of Theorem \ref{T-Nag}] Let $\cC$ be a curve in the 
normal form (\ref{E-snf}). Along the ``line at infinity'' with equation $z=0$,
 the equation $\Phi=0$ reduces to $x^3=0$,
yielding the single point $(0:1:0)$, counted with multiplicity three.
Thus $(0:1:0)$ is a flex point (non-singular since 
$\partial\Phi/\partial z \ne 0$), and the line $z=0$ is the tangent line at
this flex point.

Conversely, given any irreducible $\cC$ 
which is defined over $\bF$ and any flex point $\p\in\cC_\bF$,
we can put the curve into normal form in four steps, as follows.
\ssk

{\bf Step 1.} Choose an $\bF$-linear change of coordinates which maps $\p$
to the point $(0:1:0)$ and maps the tangent line at $\p$ to the line $z=0$.
It is then easy to check that the image of $\cC_\bF$ will have defining 
equation of the form $$\Phi(x,y,z)~ =~x^3+z\Psi(x,y,z)~,$$ where $\Psi$ 
is homogeneous quadratic with coefficients in $\bF$. Note that the coefficient
of $y^2z$ in $\Phi$ must be non-zero. In fact
it is easy to check that $$\Phi_x(0:1:0)=\Phi_y(0:1:0)=0$$
and that $\Phi_z(0,1,0)$ is equal to the coefficient of $y^2z$.
Since our 
flex point is assumed to be non-singular, this coefficient must be non-zero.\ssk

{\bf Step 2.} If we make a linear change of coordinates,
 replacing  $x$ by $\alpha x$
and $y$ by $\beta y$, and also replace $\Phi$ by $\Phi/\alpha^3$,
then the equation will take the form $\widehat\Phi=0$ where
$$\widehat\Phi(x,y,z)~=~ x^3 + z\Psi(\alpha x, \beta y, z)/\alpha^3~.$$
Thus the coefficient of $y^2z$ is now multiplied by $\beta^2/\alpha^3$.
Now choose $\alpha$ and $\beta$ so that the coefficient
of $y^2z$ will be $-1$. (As one example, there is a unique choice
with $\alpha=\beta$.) Working in affine coordinates with $z=1$, 
this means that our  $\widehat\Phi$ will take the form
$$ -y^2+x^3+px^2+ qx +r + y(sx+t)~,$$
with coefficients $p,\, q,\, r,\, s,\, t\, \in\, \bF$.\ssk

{\bf Step 3.} To get rid of the $y$ terms on the right, simply
 replace $y$ by $y+(sx+t)/2$. This will yield a function of the form
$$ -y^2+x^3+p'x^2+q'x+r'~.$$

{\bf Step 4.} To eliminate the  $x^2$ term, replace $x$ by $x-p'/3$.
Our function will then be in the required form $-y^2+x^3+ax+b$.\end{proof}
\medskip

\begin{lem}\label{L-sing} Using the normal form $(\ref{E-snf})$ with
$a,\,b\in\bF\subset\C$, the curve 
$\cC$ is singular if and only if the equation $x^3+ax+b=0$ has
a double root, which is necessarily in the subfield $\bF$, or if and only
if the discriminant $ -(4a^3+27b^2)$ is zero.
\end{lem}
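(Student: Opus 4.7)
The plan is to read off a singular point of $\cC$ directly from the vanishing of the three partial derivatives of $\Phi$, and to translate this into the classical double-root condition for the univariate polynomial $p(x)=x^3+ax+b$. First I would compute
\[
\Phi_x=3x^2+az^2,\qquad \Phi_y=-2yz,\qquad \Phi_z=-y^2+2axz+3bz^2,
\]
and observe that any singular point must have $\Phi_y=0$, i.e.\ $y=0$ or $z=0$. The case $z=0$ can be dispatched in one line: the only point of $\cC$ at infinity is $(0:1:0)$ (since setting $z=0$ in $\Phi=0$ forces $x=0$), and there $\Phi_z=-1\ne 0$, so no point at infinity is singular.

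The main case is therefore $y=0$ with $z\ne 0$; normalising $z=1$, the conditions $\Phi=0$ and $\Phi_x=0$ become
\[
p(x)=x^3+ax+b=0\qquad\text{and}\qquad p'(x)=3x^2+a=0,
\]
which is exactly the statement that $x$ is a double root of $p$. I would then check that the remaining equation $\Phi_z=2ax+3b=0$ is automatic: from $p'(x)=0$ we get $a=-3x^2$, from $p(x)=0$ we get $b=2x^3$, and thus $2ax+3b=-6x^3+6x^3=0$. So $\cC$ is singular precisely when $p(x)$ has a double root.

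To see that the double root lies in $\bF$, I would run one step of the Euclidean algorithm over $\bF$:
\[
p(x)-\tfrac{x}{3}\,p'(x)=\tfrac{2a}{3}\,x+b,
\]
so any common root of $p$ and $p'$ satisfies $\tfrac{2a}{3}x+b=0$. If $a\ne 0$ the double root is $x=-3b/(2a)\in\bF$; if $a=0$, then $p'=3x^2$ forces $x=0$ and $p(0)=b=0$, giving the triple root $0\in\bF$. Finally, the classical identity $\operatorname{disc}(p)=-(4a^3+27b^2)$ identifies the ``$p$ has a double root'' condition with $4a^3+27b^2=0$, completing the equivalence. No step here is difficult; the only point that requires a moment's thought is confirming that $\Phi_z=0$ is redundant in the main case, which is what makes the double-root criterion exact rather than merely necessary.
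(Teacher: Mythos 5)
Your proof is correct and follows essentially the same route as the paper's: rule out the point at infinity via $\Phi_z(0,1,0)=-1$, reduce to $y=0$, $z=1$ via $\Phi_y=-2yz$, identify $\Phi=\Phi_x=0$ with $p(x)=p'(x)=0$, and extract the double root $-3b/(2a)\in\bF$ (the paper gets this from Vieta's formulas rather than your Euclidean-algorithm step, but the computation is the same). If anything you are slightly more careful than the paper, which never explicitly verifies that $\Phi_z=0$ is automatic at the candidate singular points.
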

\smallskip

\begin{proof}
Over the complex numbers, there is always an essentially unique factorization
$$x^3+ax+b~=~(x-r_1)(x-r_2)(x-r_3)~.$$
Suppose that $(x:y:z)$ is a singular point of $\cC$. Since $\Phi_z(0:1:0)=-1$,
the unique point of $\cC$ on the line $z=0$ is certainly non-singular,
so it will suffice to work in affine coordinates with $z=1$. 
Every point with $y\ne 0$ is non-singular since
$\Phi_y(x,y,1)=-2y\ne 0$. Thus it only remains to consider the three
points $(r_j:0:1)$ on the line $y=0$. For example
$\Phi_x(r_1,0,1)=(r_1-r_2)(r_1-r_3)$, so that the point $(r_1:0:1)\in\cC$
is singular if and only if $r_1$ is a double root.

Next we need to check that a double root necessarily
belongs to the subfield $\bF\subset\C$. But the sum of the roots is zero,
so if $r$ is a double root, then the third root is $-2r$. It follows easily
that $a=-3r^2$ and $b=2r^3$, so that either $a=b=r=0\in \bF$ or else
$r=-3b/2a\in \bF$.

Finally, we apply the classical discriminant identity
$$ \prod_{i<j} (r_i-r_j)^2~=~ -( 4a^3+27 b^2).$$
(See for example \cite{BM}.)  This proves Lemma \ref{L-sing}.
\end{proof}
\medskip

\begin{lem}\label{L-invar}
A projective change of coordinates $$(x:y:z)\mapsto (X:Y:Z)$$ which fixes the
flex point $(0:1:0)$ will transform a curve
in the standard normal form $y^2=x^3+ax+b$ to a curve $Y^2=X^3+AX+B$
in the same normal form
if and only if this change of coordinates has the form
\begin{equation}\label{E-ch-coord}
 X= t^2x~,\quad Y=t^3y~,\quad{\it and~hence}
\quad A=t^4a\quad{\it and}\quad B=t^6b\end{equation}
for some non-zero $t\in \bF$.
\end{lem}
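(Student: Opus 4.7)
The plan is to write down the general projective transformation subject to the hypotheses, extract rigid constraints from the flex point and its tangent line, then match coefficients of the transformed equation against the standard normal form.

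First I would observe that in each standard normal form the tangent line to the curve at the flex point $(0:1:0)$ is the line at infinity ($z = 0$ in the old coordinates, $Z = 0$ in the new), as one checks from the partial derivatives of $\Phi$ at $(0,1,0)$. Since any projective automorphism of $\bP^2$ carries tangent lines of a curve to tangent lines of its image, the transformation must send the line $\{z = 0\}$ to $\{Z = 0\}$. Combined with fixing $(0:1:0)$, this pins down the matrix $\A = (a_{ij})$ (with entries in $\bF$) so that $a_{12} = a_{31} = a_{32} = 0$. After rescaling we may assume $a_{33} = 1$, so that
$$X = a_{11} x + a_{13} z,\qquad Y = a_{21} x + a_{22} y + a_{23} z,\qquad Z = z,$$
with $a_{11}, a_{22} \in \bF \setminus \{0\}$.

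Next I would substitute these expressions into $-Y^2 Z + X^3 + A X Z^2 + B Z^3$ and require the result to agree with $\mu\bigl(-y^2 z + x^3 + a x z^2 + b z^3\bigr)$ for some nonzero scalar $\mu \in \bF$. The coefficients of $y^2 z$ and $x^3$ immediately give $\mu = a_{22}^2 = a_{11}^3$. The monomials $xyz$ and $yz^2$ do not appear on the right, so their coefficients $-2 a_{21} a_{22}$ and $-2 a_{22} a_{23}$ must vanish; since $a_{22} \neq 0$, this forces $a_{21} = a_{23} = 0$. The $x^2 z$ coefficient then reduces to $3 a_{11}^2 a_{13}$, forcing $a_{13} = 0$. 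What remains are the $xz^2$ and $z^3$ coefficients, yielding $A\,a_{11} = \mu\, a$ and $B = \mu\, b$.

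Finally, setting $t = a_{22}/a_{11} \in \bF$, the relation $a_{11}^3 = a_{22}^2$ forces $a_{11} = t^2$ and $a_{22} = t^3$, hence $\mu = t^6$. Thus the transformation is precisely $X = t^2 x$, $Y = t^3 y$, $Z = z$, and the two remaining relations become $A = t^4 a$ and $B = t^6 b$. The converse direction is an immediate substitution check. The main potential obstacle is the bookkeeping of monomial coefficients in the substitution, but once the flex/tangent argument has already triangularized the matrix, the remaining algebra is routine.
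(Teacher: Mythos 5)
Your proof is correct and follows essentially the same route as the paper: use the fixed flex point and the preservation of its tangent line $\{z=0\}$ to reduce the transformation to (affine) triangular form, then kill the unwanted coefficients by matching monomials and solve $a_{22}^2=a_{11}^3$ via $t=a_{22}/a_{11}$. The only cosmetic difference is that you stay in homogeneous coordinates and extract $a_{12}=0$ directly from the fixed point, whereas the paper passes to the affine chart $z=1$ and recovers the corresponding condition from the vanishing of the $x^2y$ coefficient.
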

\smallskip

\begin{proof}  If we make the substitutions (\ref{E-ch-coord}) in the
equation $$Y^2=X^3+AX+B\,,$$ then we obtain the original equation $y^2=x^3+ax+b$
multiplied by $t^6$. To show that this is the only permissible change
 of coordinates, we proceed as follows.

Since the line $z=0$ is  tangent  to our curve at the
marked flex point, it must certainly map onto itself under any projective
transformation
 which preserves this point and its tangent direction. Thus it will
suffice to work in affine coordinates, with $z=1$. 
The most general linear transformation then has the form
$$ X=(\alpha x+\beta y)+ \xi~,\quad Y=(\gamma x+\delta y) +\eta~, $$
with $\alpha,\,\beta,\,\gamma,\,\delta,\,\xi,\,\eta\,\in\,\bF$,
and with $\alpha\delta-\beta\gamma\ne 0$. 
Substituting these values into $X$ and $Y$, the equation $~Y^2=X^3+AX+B$
should reduce to a constant multiple of $~y^2=x^3+ax+b~$ for suitably chosen 
$A$ and $B$. Here the coefficient $\beta$ must be zero so that there is
no $x^2y$ term in the expansion, and $\gamma=0$ so that there is
no $xy$ term. Similarly $\xi=0$ so that there is no $x^2$
term, and $\eta=0$ so that there is no $y$ term. Thus $X=\alpha x$ and
 $Y=\delta y$. Finally, we must have $\delta^2=\alpha^3$ so that the
coefficients of $y^2$ and $x^3$ will be equal. Thus, setting 
$t=\delta/\alpha$, we have $t^2=\delta^2/\alpha^2=\alpha$ and
$t^3=\delta^3/\alpha^3=\delta$. Thus we obtain
$$ t^6 y^2~=~ t^6 x^3 + t^2 A x + B~.$$
Dividing by $t^6$, the required equations
$A=t^4a$ and $B= t^6b$ now follow.
\end{proof}
\medskip

\begin{coro}\label{C-classif}
Every smooth complex cubic curve $\cC$
can be reduced to the standard form $(\ref{E-snf})$
by a projective transformation; and two such curves are projectively
equivalent if and only if they share the same value for the invariant
\begin{equation}\label{E-J}
J(\cC)~=~\frac{4a^3}{4a^3+27 b^2}~\in\C~.\end{equation}
Here any value $J(\cC)\in\C$ can occur.
\end{coro}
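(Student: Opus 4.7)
The plan is to assemble the corollary from the earlier existence result (Theorem~\ref{T-Nag}), the rigidity statement for coordinate changes preserving the flex point at infinity (Lemma~\ref{L-invar}), and the flex-transitivity of the automorphism group (Corollary~\ref{C-aut}).

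First I would establish reducibility to standard form. By Theorem~\ref{T-9flex} every smooth complex cubic $\cC$ has nine flex points, so $\cC$ certainly contains a flex point over $\C$. Applying Theorem~\ref{T-Nag} with $\bF=\C$ then yields a projective equivalence of $\cC$ with a curve $y^2=x^3+ax+b$; smoothness forces $4a^3+27b^2\ne 0$ by Lemma~\ref{L-sing}, so $J(\cC)=4a^3/(4a^3+27b^2)$ is a well-defined element of $\C$ for this particular representative.

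Next I would prove that $J$ is a complete projective invariant. For the ``only if'' direction, suppose two standard-form curves $\cC_1,\cC_2$ are related by some $\phi\in\Aut(\bP^2)$ sending $\cC_1$ onto $\cC_2$. The map $\phi$ need not fix the flex point at infinity, but because projective maps carry flex points to flex points and because $\Aut(\bP^2,\cC_2)$ acts transitively on the nine flex points of $\cC_2$ by Corollary~\ref{C-aut}, I can postcompose $\phi$ with a suitable element of $\Aut(\bP^2,\cC_2)$ to arrange that $(0:1:0)\in\cC_1$ is sent to $(0:1:0)\in\cC_2$. Lemma~\ref{L-invar} then forces the adjusted map to have the form $(x:y:z)\mapsto(t^2x:t^3y:z)$ for some $t\in\C^*$, giving $a_2=t^4a_1$ and $b_2=t^6b_1$; the factor $t^{12}$ cancels in the expression for $J$, yielding $J(\cC_1)=J(\cC_2)$. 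Applied to two reductions of a single $\cC$ to standard form, this also shows that $J(\cC)$ is intrinsically defined. For the ``if'' direction, the equality $J(\cC_1)=J(\cC_2)$, after clearing denominators, becomes $a_1^3b_2^2=a_2^3b_1^2$. Picking any square root $t\in\C$ of $a_1b_2/(a_2b_1)$ (with the special cases $a_i=0$ or $b_i=0$ handled separately by solving $t^6=b_2/b_1$ or $t^4=a_2/a_1$ directly) gives $t^4=a_2/a_1$ and $t^6=b_2/b_1$, so the scaling $(x:y:z)\mapsto(t^2x:t^3y:z)$ transforms $\cC_1$ to $\cC_2$. Finally, surjectivity of $J:\C\to\C$ is immediate from explicit representatives: $(a,b)=(0,1)$ realizes $J=0$, $(a,b)=(1,0)$ realizes $J=1$, and for other $J_0$ setting $a=1$ and solving $27J_0b^2=4(1-J_0)$ over $\C$ produces a smooth curve with the desired invariant.

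The main obstacle is the well-definedness of $J$: Lemma~\ref{L-invar} only controls coordinate changes that already fix one chosen flex point, whereas a priori any of the nine flex points could have been placed at infinity, and a given projective equivalence between standard-form curves can permute these freely. One must first invoke the flex-transitivity of $\Aut(\bP^2,\cC_2)$ from Corollary~\ref{C-aut} to reduce to the flex-preserving case before Lemma~\ref{L-invar} can be applied — this combination is really the heart of the argument; once it is in place, the remaining algebraic manipulations with $t^4$ and $t^6$ are routine.
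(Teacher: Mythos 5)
Your proposal is correct and follows essentially the same route as the paper: existence via Theorem~\ref{T-9flex} and Theorem~\ref{T-Nag}, well-definedness and invariance of $J$ via the flex-transitivity of Corollary~\ref{C-aut} combined with Lemma~\ref{L-invar}, and the converse via extraction of complex roots to realize $t^4=a_2/a_1$, $t^6=b_2/b_1$. The point you flag as the heart of the matter --- reducing to the flex-preserving case before applying Lemma~\ref{L-invar} --- is exactly the step the paper singles out as well.
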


\begin{proof} This follows directly from Theorem \ref{T-Nag} and Lemma
\ref{L-invar}.  There are three places where the restriction to the
complex case is necessary. First, according to Theorem \ref{T-9flex} every
smooth cubic curve has a flex point. Second, according to 
 Corollary \ref{C-aut}, there is an automorphism which carries any flex point to
any other flex point, so that it doesn't matter which flex point we choose
for the reduction to normal form. Third, since every complex number has
a complex square root, it follows easily  that we can use the transformation
$~ a\mapsto A=t^4a,~b\mapsto B=t^6b~$
for suitably chosen $t$
to convert the pair of coefficients $a,\,b$ to $A,\,B$,
whenever  the ratio $(a^3:b^2)\in \bP^1(\C)$ is equal to  $(A^3:B^2)$.

However, this ratio $(a^3:b^2)$ is a bit awkward to work with,
since either $a$ or $b$ may be
zero, and since the ratio $(-3^2:2^2)$ occurs only for singular curves.
The equivalent invariant (\ref{E-J}) is much more convenient since it takes
all possible finite values for smooth curves, and is infinite only
for singular curves by Lemma \ref{L-sing}.
Further details of the proof of Corollary \ref{C-classif} are straightforward.
\end{proof}
\medskip

\begin{rem}\label{R-Jsubf} 
If the curve $\cC$ is defined over a subfield $\bF\subset\C$, then
the invariant $J(\cC)$ must belong to $\bF$.
 In fact, since the flex points are defined by algebraic
equations with coefficients in $\bF$, they are are contained in\footnote
{Suppose for example that $(x:y:1)\in\bP^2(\C)$ is a flex point. Then the 
field $\bF''$ obtained from $\bF$ by adjoining $x$ and
 $y$ must be a finite extension of $\bF$. For otherwise  the inclusion map
$\bF\to\C$ would extend to infinitely many different embeddings of $\bF''$ 
into $\C$, leading to infinitely many flex points. The required $\bF'$ is now
just the splitting field of $\bF''$ over $\bF$.}
 $\bP^2(\bF')$ for some finite Galois extension $\bF'\supset\bF$; 
hence $J(\cC)\in\bF'$. But $J(\cC)$ is invariant
under all automorphisms of $\bF'$ over $\bF$, so it must belong to $\bF$.
\end{rem}
\medskip

We can give this invariant $J\in\C$ a more geometric interpretation as follows.
Recall that a curve in standard normal form is uniquely determined by the
three roots $r_j$, which are distinct if and only if the curve is non-singular.
We will use the word ``\textbf{\textit{triangle}}'' 
as a convenient term for an unordered
set consisting of three distinct points in the complex plane.
({\bf Caution:} The three points are allowed to lie in a straight line.)
\medskip

 \begin{definition} \label{D-shape}
 We will say that two subsets of the complex plane have the 
same \textbf{\textit{shape}} if there is a complex affine
automorphism $x\mapsto px+q$ which takes one to the other.
\end{definition}\smallskip

\begin{prop}\label{P-tri} \rm
For cubic curves of the form $y^2=f(x)$, where $f(x)$ is a cubic polynomial
with distinct roots $\{r_j\}$, the shape of the triangle formed by the
three roots is a complete invariant for projective equivalence.
\end{prop}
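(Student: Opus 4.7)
\bigskip
\noindent\textbf{Proof plan.}
I would prove the two implications of the proposition separately.

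For the implication ``same shape $\Rightarrow$ projectively equivalent'', suppose the affine map $x \mapsto p\,x + q$ carries the roots of $f_1$ onto those of $f_2$ (as unordered sets). I would then write down the explicit projective transformation
$$ (x{:}\,y{:}\,z) ~\longmapsto~ (p\,x + q\,z \,:\, t\,y \,:\, z), $$
with $t \in \C$ chosen so that $t^2 = c_2\, p^3 / c_1$, where $c_i$ is the leading coefficient of $f_i$. A direct substitution into the defining equation $y^2 = f_1(x)$, using $x=(X-q)/p$ and $x-r_j^{(1)}=(X-r_j^{(2)})/p$, shows that the image satisfies $Y^2 = f_2(X)$, giving the desired projective equivalence.

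For the converse, ``projectively equivalent $\Rightarrow$ same shape'', my plan is first to reduce each curve to the standard normal form $y^2 = x^3 + a_i x + b_i$. This reduction uses only substitutions of the types $x \mapsto \alpha x + \beta$ and $y \mapsto \gamma y$, each of which acts on the roots of $f_i$ as an affine transformation of $\C$; hence the shape of the root triangle is preserved. Projective equivalence is also preserved, so by Corollary~\ref{C-classif} the two standard forms share the same $J$-invariant. Everything is thereby reduced to showing that \emph{two centered cubic triangles (centroid $=0$) with equal $J$-invariants are related by a pure scaling $r \mapsto p\,r$.}

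To finish, I would observe that under $r \mapsto p\,r$ the coefficients transform by $a \mapsto p^2 a$ and $b \mapsto p^3 b$, so $J$ is invariant. For the converse, the equality $J_1 = J_2$ rearranges to $a_1^3 b_2^2 = a_2^3 b_1^2$. In the generic case with all $a_i, b_i$ nonzero, I would choose $p$ with $p^2 = a_2/a_1$; then $p^6 = (a_2/a_1)^3 = (b_2/b_1)^2$, so $p^3 = \pm b_2/b_1$, and replacing $p$ by $-p$ if necessary makes $p^3 = b_2/b_1$ exactly, so $p$ realizes a scaling carrying the first triangle to the second. The two degenerate cases $J=0$ (forcing $a_i=0$, roots an equilateral triangle around $0$) and $J=1$ (forcing $b_i=0$, three collinear roots symmetric about $0$) admit direct inspection, with all cubics of that $J$ value forming a single scaling orbit. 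The main obstacle I anticipate is organizing this final sign/root-selection argument cleanly and dispatching the degenerate cases; the other steps are routine bookkeeping made possible by Corollary~\ref{C-classif}.
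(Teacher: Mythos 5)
Your proof is correct, and it follows the same basic skeleton as the paper's (two-sentence) proof --- reduce to the standard normal form by a shape-preserving affine change of $x$, then analyze the residual freedom --- but the converse direction takes a longer route. The paper appeals directly to Lemma \ref{L-invar}: any projective equivalence between curves in standard normal form (after composing with an automorphism carrying flex point to flex point, via Corollary \ref{C-aut}) is of the form $X=t^2x$, $Y=t^3y$, which visibly scales the centered root triangle by $t^2$; conversely, any scaling of a centered triangle arises this way because every nonzero $p\in\C$ is a square. You instead pass through the $J$-invariant and Corollary \ref{C-classif}, translating ``same $J$'' into the coefficient identity $a_1^3b_2^2=a_2^3b_1^2$ and solving for the scaling factor $p$ by hand, with the sign adjustment and the degenerate cases $a_i=0$ and $b_i=0$ treated separately. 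This is valid and not circular (Corollary \ref{C-classif} precedes the Proposition), but it re-derives by coefficient algebra what Lemma \ref{L-invar} already supplies geometrically. Your forward direction, on the other hand, is more self-contained than the paper's: the explicit map $(x:y:z)\mapsto(px+qz:ty:z)$ with $t^2=c_2\,p^3/c_1$ handles arbitrary leading coefficients directly, with no preliminary normalization.
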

\smallskip

In particular, this is true for curves in the normal form $y^2=x^3+ax+b$.
Since $J(\cC)$ is also a complete invariant for
projective equivalence, it follows that this ``shape'' is uniquely
determined by the complex number $J$.
\smallskip

\begin{proof}[Proof of Proposition \ref{P-tri}] It is not difficult
to put a curve of the form\break
 $y^2=f(x)~$ into the standard form by an affine change of the $\,x\,$ 
variable. The conclusion then follows easily from Lemma \ref{L-invar}.
\end{proof}
\medskip

\begin{rem}\label{R-tri}
(See Figure \ref{F-jpic} for some typical examples.) Note that:
\begin{quote}
$\quad J=0~~\Leftrightarrow~~ a=0~~\Leftrightarrow\quad$ the triangle
 is equilateral, and

$\quad J=1~~\Leftrightarrow~~ b=0~~\Leftrightarrow~~$ one vertex is the
 midpoint of the other two.
\end{quote}
\noindent  For real values of $J$, the triangle is isosceles if $J<1$,
but the three vertices lie on a straight line if $J\ge 1$. If we label the
three edge
lengths $|r_j-r_k|$ as $e_1\le e_2\le e_3$, then $J\not\in\R$ if and only if 
$e_1<e_2<e_3$ and $e_3\neq e_1+e_2$. In fact the corresponding edges lie in 
positive (or negative)  cyclic order around the
triangle according as $J$ lies in the upper (or lower) half-plane. For 
a sequence of curves, $|J|$ tends to infinity if and only if the ratio
$e_3/e_1$ tends to infinity.
\end{rem}

\begin{figure}[t!]
\centerline{\includegraphics[width=4.5in]{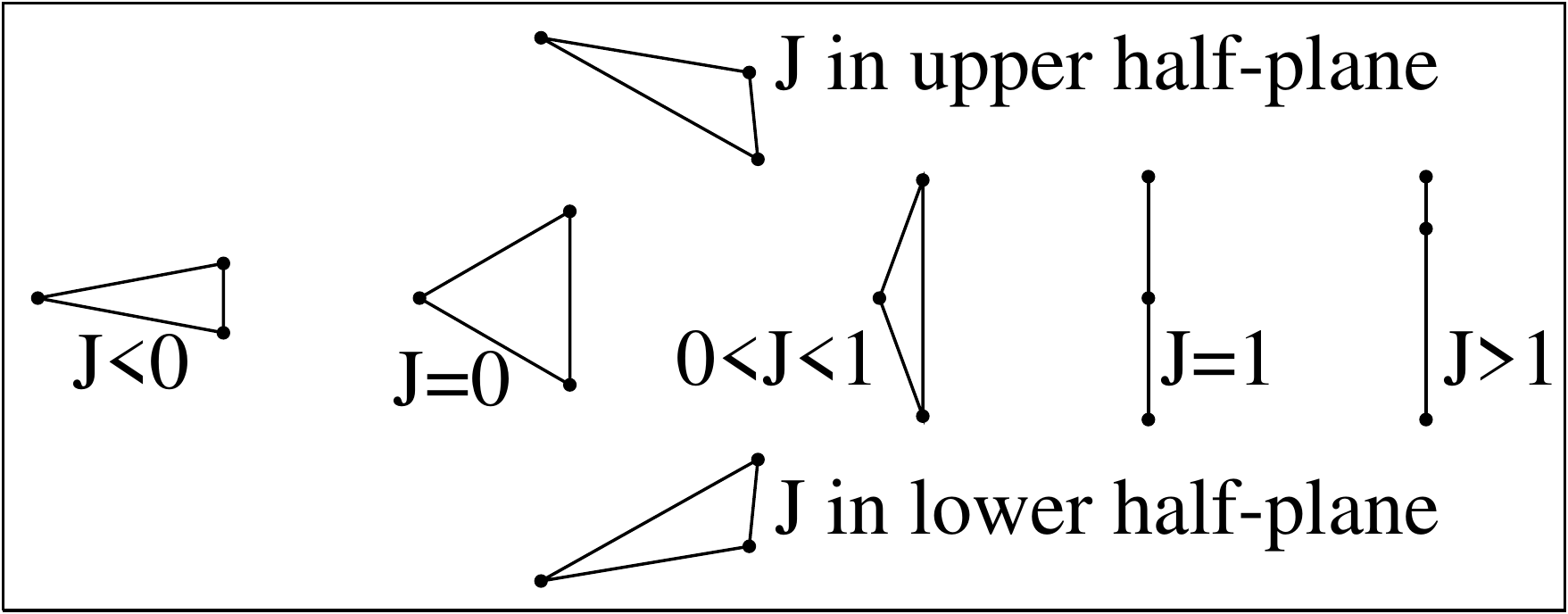}}
\mycaption{\label{F-jpic} \sf  The $J$-invariant describes the shape of the
$($possibly degenerate$)$ triangle in $\C$  with vertices $r_1,\,r_2,\, r_3$.}
\end{figure}

\bsk

We can now give a precise description of the automorphism group\break
(Compare Definition \ref{D-aut}).

\begin{coro}\label{C-aut2}  
The automorphism group of any smooth complex cubic curve can be described by
a split exact sequence
$$ 1~~\to~~N(\bP^2,\,\cC)~~\to~~\Aut(\bP^2,\,\cC)~~\to~\Aut(\bP^2,\,\cC,\,\p_0)~~\to~~1~.$$
Here:\vspace{-.4cm}
\begin{quote}
\begin{description}
\item[$\p_0~$] can be any one of the nine flex points,
\item[$\Aut(\bP^2,\,\cC,\,\p_0)$] is the subgroup of $\Aut(\bP^2,\,\cC)$ 
consisting of all automorphisms which fix the point $\p_0$, and
\item[$N(\bP^2,\,\cC)\cong \Z/3\oplus\Z/3$] is the normal 
subgroup consisting of all automorphisms which have no fixed point on $\cC$,
together with the identity automorphism.
\end{description}
\end{quote}
\noindent Furthermore, $N$ is a maximal abelian subgroup,
 and acts simply transitively on the set of nine flex
points. The subgroup $\Aut(\bP^2,\,\cC,\,\p_0)$ is cyclic of order:
\vspace{-.4cm}
\begin{quote}
\begin{description}
\item[six~] if $J(\cC)=0$,

\item[four~] if $J(\cC)=1$, but

\item[two~] in all other cases.
\end{description}
\end{quote}\end{coro}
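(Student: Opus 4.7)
My plan is to combine two normal forms: the Hesse form supplies the abelian subgroup $N$, while the standard form supplies the stabilizer $H$ of a flex point; orbit-stabilizer then glues the two pieces together.

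By Theorem~\ref{T-hnf}, I may place $\cC$ in Hesse form; Lemma~\ref{L-hess-aut} and Remark~\ref{R-hess-flex} then exhibit a subgroup $N \cong \Z/3 \oplus \Z/3$ of $\Aut(\bP^2, \cC)$ acting freely on $\cC$ and simply transitively on its nine flex points. Alternatively, by Theorem~\ref{T-Nag} I may place $\cC$ in standard form $y^2 = x^3 + ax + b$ with $\p_0 = (0:1:0)$. Any element $\phi$ of $H := \Aut(\bP^2, \cC, \p_0)$ must also preserve the tangent line $z = 0$ at $\p_0$, so Lemma~\ref{L-invar} forces $\phi:(x,y) \mapsto (t^2 x, t^3 y)$ with $t^4 a = a$ and $t^6 b = b$. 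Smoothness excludes $a = b = 0$ (Lemma~\ref{L-sing}); the rest is a trichotomy. If $ab \ne 0$ (equivalently $J(\cC) \notin \{0, 1\}$) both constraints give $t^2 = 1$ and $|H| = 2$; if $b = 0$ (so $J(\cC) = 1$) only $t^4 = 1$ is required and $|H| = 4$; if $a = 0$ (so $J(\cC) = 0$) only $t^6 = 1$ is required and $|H| = 6$. In each case $H$ is cyclic, parametrized by the corresponding group of roots of unity in $t$.

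To assemble the exact sequence, I use that every projective automorphism of $\cC$ permutes flex points, because the Hessian locus is intrinsically defined (Lemma~\ref{L-hess} and Theorem~\ref{T-hess-d}). Since $N$ acts transitively on the nine flex points, for each $\phi \in \Aut(\bP^2, \cC)$ there is a unique $n \in N$ with $(n\phi)(\p_0) = \p_0$; writing $\phi = n^{-1}(n\phi)$ expresses $\phi$ in $NH$, and uniquely so because $N \cap H = \{1\}$ (nontrivial elements of $N$ fix no point of $\cC$, while $H$ fixes $\p_0$). Thus $|\Aut(\bP^2, \cC)| = 9|H| \in \{18, 36, 54\}$ and the inclusion $H \hookrightarrow \Aut(\bP^2, \cC)$ provides the splitting, provided $N$ is normal.

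The main obstacle is the normality of $N$. I plan to prove it by showing that $N$ equals the conjugation-invariant set $\{e\} \cup \{\phi \in \Aut(\bP^2, \cC) : \phi \text{ acts without fixed points on } \cC\}$. The containment $\supseteq$ is Lemma~\ref{L-hess-aut}; for the reverse containment, writing any $\phi \in \Aut(\bP^2, \cC)$ uniquely as $\phi = nh$ with $n \in N$ and $h \in H$, it suffices to show that every product $nh$ with $h \ne 1$ has a fixed point on $\cC$. I would verify this case by case, working in the Hesse coordinates from Step~1 together with the explicit $t$-action from Step~2, for each of the three possibilities $|H| = 2, 4, 6$; conceptually, once the elliptic curve structure of Section~\ref{S-CT} is in hand, one sees that $nh$ has a fixed point iff the endomorphism $h - 1$ of the elliptic curve is surjective, which it always is when $h \ne 1$. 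This characterization of $N$ is manifestly normal. Maximality of $N$ as an abelian subgroup follows because any strictly larger abelian subgroup would contain a nontrivial $h \in H$ centralizing $N$; but the involution in $H$ inverts $N$ by Lemma~\ref{L-hess-aut}, and no nontrivial element of $(\Z/3)^2$ is fixed by inversion.
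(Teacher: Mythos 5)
Your proof is correct and follows essentially the same route as the paper: the Hesse form supplies $N$, Lemma~\ref{L-invar} in the standard form computes the stabilizer of $\p_0$, and simple transitivity of $N$ on the flex points gives the unique factorization $\Aut(\bP^2,\cC)=NH$. You are in fact more careful than the paper at one point: the paper asserts normality of $N$ directly from the conjugation-invariance of ``acting without fixed points,'' which tacitly assumes that the fixed-point-free automorphisms (plus the identity) coincide exactly with the Hesse $(\Z/3)^2$; your verification that every $nh$ with $h\ne 1$ has a fixed point (via the torus picture, where $nh$ acts as $w\mapsto\zeta w+\tau$ with $\zeta\ne1$, so $(\zeta-1)w=-\tau$ is solvable) closes that gap, at the cost of a forward reference to the uniformization of Section~\ref{S-conf}.

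The one place where your argument is incomplete is the maximality of $N$. You rule out a larger abelian subgroup by noting that the involution in $H$ inverts $N$; this works when the offending element $h\in H$ has even order, since then a power of $h$ is the involution. But when $J(\cC)=0$ the cyclic group $H\cong\Z/6$ contains an element of order three, whose cyclic subgroup does not contain the involution, so your argument does not apply to it. You need the separate observation that this order-three element (rotation by a primitive cube root of unity $\zeta$ on the hexagonal torus) fails to centralize $N$: it sends the translation by a $3$-torsion point $\tau$ to translation by $\zeta\tau$, and $(\zeta-1)\tau\in\Omega$ only for a $\Z/3$ subgroup of the nine $3$-torsion classes. This is an easy patch, and the paper itself offers no proof of maximality, but as written your case analysis is not exhaustive.
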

\smallskip

\noindent Thus the full automorphism group has order either 54, 36, or 18.
Note that the exceptional cases $J=0$ and $J=1$ are precisely the cases where
the ``triangle'' of Figure \ref{F-jpic} has rotational symmetry of order
three or two.
\medskip

{\bf Proof of Corollary \ref{C-aut2}.} The subgroup $N$ is normal since the
property of acting without fixed points is invariant under inner automorphism.
Since we know by Lemma~\ref{L-hess-aut} and  Theorem~\ref{T-hnf} that 
$N$ acts simply transitively on the flex points, it follows that
 any automorphism can be expressed uniquely as the composition of an element
 of $N$ with an element of $\Aut(\bP^2,\,\cC,\,\p_0)$. To compute the 
latter group, using the standard normal form,
take $\p_0$ to be the point $(0:1:0)$. According to Lemma~\ref{L-invar},
an automorphism fixing this point must have the form
$$(x:y:z)~\mapsto ~(t^2x: t^3y: z)~,\quad{\rm with}\quad
a\mapsto t^4a\quad{\rm and}\quad  b\mapsto t^6b~. $$
Thus when $a=0$ the coefficient $t$ can be any sixth root of unity, and when
$b=0$ it can be any fourth root of unity, but otherwise it can only be $\pm 1$.
(Expressed invariantly, the cyclic subgroup $\Aut(\bP,\,\cC,\,\p_0)$
 acts on the tangent space to $\cC$ at $\p_0$ by multiplication by a 
corresponding root of unity.) The conclusion follows.\qed
\medskip

\subsection*{From Hesse to Standard Normal Form.}
Since every cubic equation in Hesse normal form can be converted to one in
the standard normal form (see the proof of Theorem \ref{T-Nag}), it 
 follows  that the invariant
$J\big(\cC(k)\big)\in\C$ can be computed as a function of the Hesse parameter
 $k$. In fact, since we can always multiply the parameter $k$ by a cube root
of unity without changing the projective equivalence class,
simply by dividing one of the coordinates by this root of unity,
it follows that $J(\cC)$ can be computed as a function of $k^3\in\C\ssm\{1\}$.
The computation is straightforward (if somewhat tedious), and yields the
following result in our notation:
\begin{equation}\label{E-Fr}
  J\big(\cC(k)\big)~=~\left(\frac{k(k^3+8)}{4 (k^3-1)}\right)^3~.
\end{equation}
(Compare \cite{Fr}, as well as \cite[Prop. 2.3]{PP}.)
It follows from this expression that the invariant $J\big(\cC(k)\big)$ 
tends to infinity whenever $k^3$ tend to either infinity or $+1$. It also
follows from this expression (or from Remark \ref{R-Jsubf}) that 
 every rational value of $k$ corresponds to a rational value of $J$, or to
$J=\infty$. (However, an irrational value of $k$ may correspond to a 
rational $J$. For example $k=1\pm\sqrt 3$ yields $J=1$.)

One noteworthy property is the following. Let $\bet:\widehat\C\stackrel{\cong}
{\longrightarrow}\widehat\C$ be the M\"obius involution
\begin{equation}\label{E-eta}
\bet(k)~=~\frac{k+2}{k-1}~,\qquad{\rm with}\qquad \bet\circ\bet(k)=k~.
\end{equation}
It will be convenient to use the abbreviated
expression $J(k)$ for $J\big(\cC(k)\big)$.
\smallskip

\begin{lem}\label{L-k2k} This function $J(k)=J\big(\cC(k)\big)$ satisfies
the identity
$$ J\big(\bet(k)\big)~=~J(k)\qquad{\it for~all}\qquad k\,\in\,
\widehat\C=\C\cup\{\infty\}~.$$
\end{lem}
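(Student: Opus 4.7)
The plan is direct substitution using two classical factorizations. The whole argument rests on
\[
k^3-1=(k-1)(k^2+k+1)\qquad\textrm{and}\qquad k^3+8=(k+2)(k^2-2k+4),
\]
whose linear factors $k-1$ and $k+2$ are exactly the denominator and numerator of $\bet(k)=(k+2)/(k-1)$. Matching these factorizations against the formula $J(k)=\bigl(k(k^3+8)/(4(k^3-1))\bigr)^3$ is what makes everything cancel.

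Setting $m=\bet(k)$, I would first compute
\[
m^3-1=\frac{(k+2)^3-(k-1)^3}{(k-1)^3}\qquad\textrm{and}\qquad m^3+8=\frac{(k+2)^3+\bigl(2(k-1)\bigr)^3}{(k-1)^3}.
\]
Using $a^3\mp b^3=(a\mp b)(a^2\pm ab+b^2)$ with $a=k+2$ and $b$ equal to $k-1$ or $2(k-1)$, a short calculation collapses the two numerators to $9(k^2+k+1)$ and $9k(k^2-2k+4)$ respectively. Assembling these and using the factorizations above in reverse,
\[
\frac{m(m^3+8)}{4(m^3-1)}\;=\;\frac{(k+2)\,k(k^2-2k+4)}{4(k-1)(k^2+k+1)}\;=\;\frac{k(k^3+8)}{4(k^3-1)},
\]
and cubing yields $J(\bet(k))=J(k)$, valid as an identity of rational functions and hence on all of $\widehat\C$.

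The computation is entirely routine; the only real content is noticing the match between the linear factors of $\bet$ and the factorizations of $k^3\pm c$, which presumably motivated writing $\bet$ in this particular form in the first place. A more conceptual route would be to produce an explicit projective automorphism of $\bP^2(\C)$ carrying $\cC(k)$ onto $\cC(\bet(k))$ and then invoke the projective invariance of $J$ from Corollary \ref{C-classif}; in view of Remark \ref{R-not-unique}, such a transformation must exist, but exhibiting it explicitly, essentially by tracking how a reordering of the pair of flex points chosen in reducing to Hesse form acts on $k$, looks more laborious than the direct substitution above.
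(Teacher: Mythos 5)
Your computation is correct and coincides with the paper's first proof, which merely asserts that the identity between two degree-twelve rational functions ``can be verified by direct computation''; your factorizations of $k^3-1$ and $k^3+8$ against the numerator and denominator of $\bet$ supply exactly the missing details. The ``more conceptual route'' you mention is in fact the paper's second proof, and it is less laborious than you anticipate: the linear substitution $X=x+y+z$, $Y=x+\gamma y+\bbar\gamma z$, $Z=x+\bbar\gamma y+\gamma z$ with $\gamma=e^{2\pi i/3}$ sends $x^3+y^3+z^3$ to $3(x^3+y^3+z^3+6xyz)$ and $XYZ$ to $x^3+y^3+z^3-3xyz$, from which $k\mapsto(k+2)/(k-1)$ drops out in one line.
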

\smallskip

\begin{figure}[ht!]
\centerline{\includegraphics[width=2.8in]{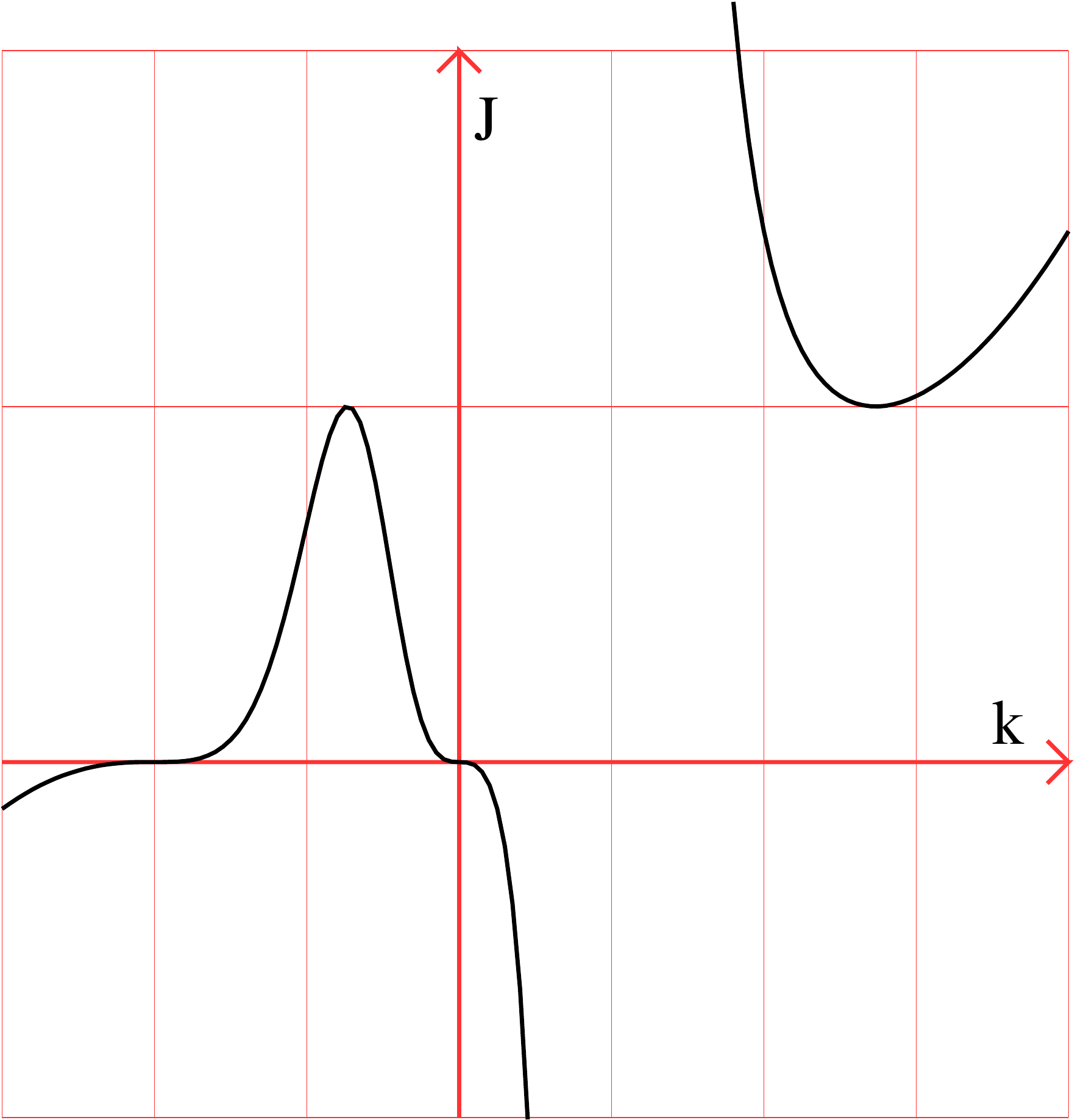}}
\mycaption{\label{F-kJ} \sf Graph of the map $~k\mapsto J(\cC(k))~$
of equation (\ref{E-Fr}) 
for real values of $k$, with $k\in[-3,\,4]$ and $J\in[-1,\,2]$.
Note that every line $J={\rm constant}$
intersects the graph in exactly two points. As examples, for $J=0$ we have
$k=0$ or $k=-2$, while for $J=1$ we have $k=1\pm\sqrt{3}$. 
 The graph is divided into two connected  components: The component 
with $k<1$ represents curves $\cC(k)_\R$ which are connected, while the
component with $k>1$ represents curves with two connected components.}
\end{figure}
\medskip

In particular, it follows that the graph, shown in Figure~$\ref{F-kJ}$,
 is invariant under the involution 
$$(k,J)~~\longleftrightarrow~~ \big(\bet(k),\,J\big)~,$$
which maps the region $k<1$ to itself with fixed point $(1-\sqrt 3,\,1)$, and
the region $k>1$ to itself with fixed point $(1+\sqrt 3,\,1)$. (Note that both
fixed points lie along the line $J=1$.)

\bigskip

{\bf First Proof.} The equation $J\big(\bet(k)\big)=J(k)$ is an identity
between two rational functions  of degree twelve, which can be 
verified by direct computation.\qed
\bigskip

However, this argument gives no clue as to how to construct an actual
projective equivalence   between $\cC(k)$ and $\cC\big(\bet(k)\big)$. 
That can be remedied as follows.\medskip

{\bf Second Proof.} Let
$$\begin{matrix} X&=& x\,+\,y\,+\,z\\ Y&=&x+\gamma y +\bbar\gamma z\\
Z&=&x+\bbar\gamma y+\gamma z\\\end{matrix}$$
where $\gamma=e^{2\pi i/3}$. Then it is not hard to check that 
$$ X^3+Y^3+Z^3~=~ 3\Big( x^3+y^3+ z^3~+~6\,x\,y\,z\Big)~, $$
and that
$$ X\,Y\,Z~=~ x^3+y^3+z^3~-~ 3\,x\,y\,z~.$$
Setting $k=(x^3+y^3+z^3)/(3\,x\,y\,z)$, it follows easily that
$$\frac{X^3+Y^3+Z^3}{3\,X\,Y\,Z}~=~ \frac{k+2}{k-1}~,$$
and the conclusion follows.\qed
\bigskip

\begin{theo}\label{k-sym}
Let $\Gamma$ denote the group of M\"obius  transformations generated by
the involution $\bet$ and the rotation
$$\brh(k)~=~\gamma\,k~.$$
Then $\Gamma$ is equal to the twelve element
 \textbf{\textit{tetrahedral group}},  consisting of all M\"obius
transformations from the Riemann sphere to itself which map the four
point set $\{1,\,\gamma,\,\bbar\gamma,\,\infty\}$ to itself. Furthermore:

\begin{quote}
{\bf(1)} Two Hesse curves $\cC(k)$ and $\cC(k')$ are projectively equivalent
if and only if $~k'=\bmu(k)~$ for some $\bmu\in\Gamma$.
\end{quote} 

\begin{quote}
{\bf(2)} The function $k\mapsto J(k)$ can be computed as
$$ J(k)~=~\frac{1}{64}\prod_{\bmu\in\Gamma}\, \bmu(k)~.$$
\end{quote}
\end{theo}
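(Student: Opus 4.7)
The plan is to analyze how $\brh$ and $\bet$ permute the four points $\{1,\gamma,\bbar\gamma,\infty\}$, thereby identifying $\Gamma$ with $A_4$; then to deduce (1) from the invariance of $J$ (Lemma~\ref{L-k2k}) via a degree count; and finally to prove (2) by matching the divisors of both sides and fixing the constant by comparing leading behavior at $\infty$.

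By direct computation, $\brh$ fixes $\infty$ and cycles $1\to\gamma\to\bbar\gamma\to 1$, so it acts as a $3$-cycle; and using $\gamma^2+\gamma+1=0$ one checks that $\bet$ swaps $1\leftrightarrow\infty$ and $\gamma\leftrightarrow\bbar\gamma$, so it acts as the double transposition $(1,\infty)(\gamma,\bbar\gamma)$. A $3$-cycle together with a double transposition generate $A_4$, and since a M\"obius transformation is determined by its values on three points, the induced homomorphism $\Gamma\to S_4$ is injective. Thus $\Gamma\cong A_4$ is the tetrahedral group of order $12$. For part~(1), Corollary~\ref{C-classif} reduces $\cC(k)\cong\cC(k')$ to $J(k)=J(k')$. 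Lemma~\ref{L-k2k} gives $J\circ\bet=J$, and formula~(\ref{E-Fr}) gives $J(\gamma k)=\gamma^3 J(k)=J(k)$, so $J$ is $\Gamma$-invariant and descends through the quotient $\widehat\C/\Gamma$, itself a Riemann sphere. Since $J$ has degree $12$ (read off from~(\ref{E-Fr})) and the quotient map has degree $|\Gamma|=12$, the induced map $\widehat\C/\Gamma\to\widehat\C$ has degree $1$ and is therefore a bijection; hence $J(k)=J(k')$ iff $k$ and $k'$ lie in the same $\Gamma$-orbit.

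For part~(2), set $P(k)=\prod_{\bmu\in\Gamma}\bmu(k)$ and match divisors. Since $\brh(0)=0$ and $\bet(0)=-2$, the orbit $\Gamma\cdot 0=\{0,-2,-2\gamma,-2\bbar\gamma\}$ has stabilizer of order $3$, contributing zeros of $P$ at these four points each with multiplicity $3$. Likewise the finite poles of $P$ lie at $\Gamma\cdot\infty\cap\C=\{1,\gamma,\bbar\gamma\}$ with multiplicity $3$, and the three factors coming from $\langle\brh\rangle=\mathrm{Stab}(\infty)$ produce a pole of order $3$ at $\infty$ (the remaining nine factors having finite limits there). Formula~(\ref{E-Fr}) exhibits $J$ with precisely this divisor, so $J=cP$ for some $c\in\C^*$. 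To fix $c$, compare leading terms as $k\to\infty$: the stabilizer contributes $k\cdot\gamma k\cdot\bbar\gamma k=k^3$, and the other nine factors tend to $1,\gamma,\bbar\gamma$ (each thrice) with total product $1$, so $P(k)\sim k^3$; since $J(k)\sim k^3/64$ by~(\ref{E-Fr}), we obtain $c=1/64$. The main obstacle is the orbit--stabilizer bookkeeping of this divisor comparison; the identification of $\Gamma$ and part~(1) are essentially formal once the invariance of $J$ is in hand.
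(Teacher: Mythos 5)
Your argument is correct. The identification of $\Gamma$ with the alternating group on $\{1,\gamma,\bbar\gamma,\infty\}$ and the degree-count proof of part (1) are exactly the paper's. Where you genuinely diverge is part (2). The paper notes that $P(k)=\prod_{\bmu\in\Gamma}\bmu(k)$ is a degree-twelve rational map invariant under precomposition with $\Gamma$ and sending $0\mapsto 0$, $\infty\mapsto\infty$; by the quotient description already obtained in part (1) it must therefore be a constant multiple of $J$, and the constant is fixed by evaluating on the orbit of $1+\sqrt{3}$ (six points, each counted twice since $1+\sqrt 3$ is fixed by $\bet$, with product $(-2)^6=64$, against $J(1+\sqrt 3)=1$). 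You instead compare divisors outright: zeros of order three along $\Gamma\cdot 0=\{0,-2,-2\gamma,-2\bbar\gamma\}$, poles of order three along $\Gamma\cdot\infty=\{1,\gamma,\bbar\gamma,\infty\}$, matching the divisor read off from the explicit formula (\ref{E-Fr}), and you normalize via the asymptotics $P(k)\sim k^3$ and $J(k)\sim k^3/64$. Your route leans more on the closed form of $J$ and on orbit--stabilizer bookkeeping but avoids re-invoking the quotient map and the computation of a nontrivial orbit; the paper's route is shorter given part (1) but needs the slightly delicate observation that the orbit of $1\pm\sqrt 3$ is a single six-point orbit counted with multiplicity two. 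Both correctly give $c=1/64$. (Like the paper, you verify that $\Gamma$ realizes all even permutations without explicitly ruling out odd ones; this is harmless but worth a sentence, since for this equianharmonic four-point set the full M\"obius stabilizer is indeed only $A_4$.)
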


{\bf Proof.} (Compare \cite{AD}.) Clearly $\bet: 1\leftrightarrow\infty$ under
 the involution $\bet$, and it is not hard to check that 
$\bet:\gamma\leftrightarrow \bbar\gamma$. It then follows easily that $\Gamma$
 can be identified with the group consisting of all even permutations
of these four symbols. 

To prove statement {\bf(1)}, note that the quotient
$\widehat\C/\Gamma$ is a Riemann surface, necessarily isomorphic to 
$\widehat\C$. Since the map $J:\widehat\C\to\widehat\C$ clearly has the property
that $J\circ\bmu=J$ for every $\bmu\in\Gamma$, it follows that $J$ can be
expressed as a composition
$$
\xymatrix@R=3.8pc@C=3.8pc@M=0.2pc{\widehat\C \ar[r]\ar[dr]_J  & \widehat\C/\Gamma \ar[d]^h\\ &  \widehat\C ~.}$$

Since both $J$ and the projection $\widehat\C\to\widehat\C/\Gamma$ have
degree twelve, it follows that the holomorphic map $h$ has degree one,
 and hence is a conformal diffeomorphism. Since two curves $\cC(k)$ and 
$\cC(k')$
are projectively equivalent if and only if $J(k)=J(k')$, it follows that they
are projectively equivalent if and only if $k$ and $k'$
have the same orbit under $\Gamma$.

To prove {\bf(2)}, note that the function $k\mapsto\prod_\bmu\, \bmu(k)$ is
also a rational map of degree twelve which is invariant under precomposition
with any $\bmu\in\Gamma$. Furthermore, this function maps zero to zero
and infinity to infinity, so it must be some constant multiple of $J$. 
To compute the precise constant, it is enough to understand one more example.
 
It is not hard to check\footnote{In particular, note that 
$\brh^{-1}\!\circ\bet\circ\brh(1+\sqrt{3})=1-\sqrt{3}$. It is noteworthy
that to pass between the two real points $1\pm\sqrt{3}$
on the locus $J=1$ we need to
make use of a M\"obius transformation with complex coefficients.}
 that the orbit of $1+\sqrt 3$ consists of the following six points,
 each counted twice since $1+\sqrt 3$ is a fixed point of $\bet$.
$$ 1+\sqrt{3},~~ (1+\sqrt{3})\gamma,~~ (1+\sqrt{3})\bbar\gamma, ~~
 1-\sqrt{3},~~ (1-\sqrt{3})\gamma,~~ (1-\sqrt{3})\bbar\gamma~.$$
Since $(1+\sqrt{3})(1-\sqrt{3})=-2$ and $\gamma\bbar\gamma=1$, the product
$ \prod_\bmu~ \bmu(1+\sqrt{3})$ is equal to $(-2)^6=64$. Comparing this
with $J(1+\sqrt{3})=1$, the conclusion follows.\qed
\bsk

\setcounter{lem}{0}
\section{Cubic Curves as Riemann Surfaces.}\label{S-conf}

\begin{theorem}\label{T-abel} \it Every smooth cubic curve is conformally
diffeomorphic to a flat torus of the form $\C/\Omega$ where $\Omega$ is a
\textbf{\textit{lattice}} $($that is, an additive subgroup generated by two 
complex numbers which
are linearly independent over $\R\,.)$ Here $\Omega$ is uniquely determined
up to multiplication by a complex constant. Hence the shape of $\Omega$ 
$($Definition \ref{D-shape}$)$ is a complete invariant for the conformal
diffeomorphism class of $\cC$.
\end{theorem}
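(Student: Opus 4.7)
The plan is to realize the cubic curve as a quotient $\C/\Omega$ using Weierstrass elliptic functions, and then prove uniqueness of $\Omega$ up to complex scaling by passing to the universal cover.

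First, by Corollary \ref{C-classif}, I may assume $\cC$ is in standard normal form $y^2=x^3+ax+b$ with $4a^3+27b^2\ne 0$. For each lattice $\Omega\subset\C$, the Weierstrass function $\wp_\Omega$ and its derivative satisfy the classical identity $(\wp'_\Omega)^2 = 4\wp_\Omega^{\,3} - g_2(\Omega)\wp_\Omega - g_3(\Omega)$, where $g_2,g_3$ are the Eisenstein sums over $\Omega$. The map
$$ \C/\Omega~~\longrightarrow~~\bP^2(\C),\qquad z~~\longmapsto~~\big(\wp_\Omega(z):\tfrac{1}{2}\wp'_\Omega(z):1\big), $$
sending $0\in\C/\Omega$ to the flex point $(0:1:0)$, is a holomorphic map whose image is precisely the standard-form curve with coefficients $a=-g_2(\Omega)/4$ and $b=-g_3(\Omega)/4$. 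Bijectivity follows from the fact that $\wp_\Omega$ takes every complex value exactly twice on the torus, the two preimages being distinguished by the sign of $\wp'_\Omega$, except at the three $2$-division points where $\wp'_\Omega=0$ and $\wp_\Omega$ hits exactly the three roots of $4x^3-g_2x-g_3$.

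Second, I need to realize each prescribed pair $(a,b)$ — equivalently each prescribed value $J(\cC)\in\C$. Under the rescaling $\Omega\mapsto\lambda\Omega$, the Eisenstein sums transform as $g_2\mapsto\lambda^{-4}g_2$ and $g_3\mapsto\lambda^{-6}g_3$, mirroring exactly the scaling $(a,b)\mapsto(t^4a,t^6b)$ of Lemma \ref{L-invar}. Therefore it suffices to show that as $\Omega$ varies over all lattice shapes (Definition \ref{D-shape}), the associated $J$-value ranges over all of $\C$. This is the classical surjectivity of Klein's modular $j$-function on $\mathbb{H}/\mathrm{SL}_2(\Z)$, provable by analyzing the $q$-expansion of $J(\tau)$ together with a valence count on the compactified modular curve.

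Third, for uniqueness, suppose $f:\C/\Omega\stackrel{\cong}{\longrightarrow}\C/\Omega'$ is a conformal diffeomorphism. Lifting $f$ to the universal covers (both copies of $\C$) yields a biholomorphism $\widetilde f:\C\to\C$, which by the standard description of the conformal automorphism group of $\C$ must be affine, $\widetilde f(z)=\alpha z+\beta$ with $\alpha\ne 0$. Deck-equivariance of $\widetilde f$ with respect to the translation actions of $\Omega$ and $\Omega'$ then forces $\alpha\Omega=\Omega'$, so $\Omega$ is determined up to multiplication by $\alpha\in\C^{*}$, exactly as required. The main obstacle is the surjectivity invoked in step two: producing a lattice realizing each prescribed $J$-value. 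This is the one place where a genuine analytic input from elliptic function theory (or, equivalently, uniformization of genus-one Riemann surfaces) is required, and cannot be bypassed purely algebraically since the claim ranges over all complex values of $J$.
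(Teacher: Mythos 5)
Your proposal is correct, but it runs in the opposite direction from the paper's argument and leans on a substantially heavier input. The paper proves Theorem \ref{T-abel} intrinsically, starting from the curve: Lemma \ref{L-abel} exhibits a nowhere-vanishing holomorphic 1-form $dw=dx/y$ on $\cC$ (unique up to a constant multiple, since the quotient of two such forms is a holomorphic function on a compact surface), and Lemma \ref{L-cov} integrates this form along paths to identify the universal cover of $\cC$ with $\C$ and the deck group with the period lattice $\Omega=\big\{\oint_\Lambda dw\big\}$. Uniqueness of $\Omega$ up to scaling then falls out of the uniqueness of the 1-form. You instead start from lattices and map them to curves via $\wp_\Omega$, which forces you to prove that every value of $J$ is realized by some lattice --- i.e.\ the surjectivity of Klein's modular function $j$ on $\mathbb{H}/\mathrm{SL}_2(\Z)$. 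That is a true classical theorem and your appeal to the $q$-expansion plus a valence count is a legitimate way to get it, but it is a genuinely nontrivial analytic fact that the paper's route avoids entirely (the paper mentions the Weierstrass direction only as a known converse and never uses it for this theorem). Your uniqueness step --- lifting a conformal diffeomorphism of tori to an affine map of $\C$ and reading off $\alpha\Omega=\Omega'$ from deck-equivariance --- is sound and is essentially the mechanism behind the paper's Corollary \ref{C-aut3}. In exchange for the heavier dependency, your approach delivers the explicit Weierstrass parametrization of the curve as a byproduct; the paper's approach is more elementary and self-contained but produces the uniformization only abstractly, as the inverse of the Abelian integral.
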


This will be an immediate consequence of two lemmas. The first lemma
is based on methods introduced by Abel. (However Abel himself did not
work in projective space or discuss algebraic curves. He simply studied
integrals, for example of the form $\int dx/\sqrt{p(x)}$ where 
$p(x)$ is a polynomial.)

\begin{lem}\label{L-abel}
Every smooth cubic curve in $\bP^2(\C)$ possesses a \hbox{holomorphic} 1-form
$(=$ Abelian differential$)$ which is well defined and nowhere zero. 
This 1-form is unique up to multiplication by a non-zero complex constant.
\end{lem}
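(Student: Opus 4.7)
The plan is to reduce to the standard normal form via Theorem~\ref{T-Nag} and Corollary~\ref{C-classif}, exhibit a candidate $1$-form explicitly on the affine part, verify by a chart change that it extends holomorphically and without zeros across the flex point at infinity, and finally obtain uniqueness from the compactness of $\cC$.

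First, since every smooth complex cubic has a flex point (Theorem~\ref{T-9flex}), Corollary~\ref{C-classif} lets me assume $\cC$ is given in the affine chart $z=1$ by $y^2 = x^3 + ax + b$ with $4a^3+27b^2\ne 0$. On this affine piece I set
$$\omega \;:=\; \frac{dx}{2y}\,.$$
Differentiating the defining relation along the curve gives the tangent identity $2y\,dy = (3x^2+a)\,dx$, so equivalently
$$\omega \;=\; \frac{dx}{2y} \;=\; \frac{dy}{3x^2+a}\,.$$
The first expression is holomorphic and nowhere zero wherever $y\ne 0$, and the second is holomorphic and nowhere zero wherever $3x^2+a\ne 0$. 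A common zero of $y$ and $3x^2+a$ on $\cC$ would exhibit a double root of $x^3+ax+b$, which by Lemma~\ref{L-sing} would force $\cC$ to be singular. Thus the two formulas patch together to give a nowhere-zero holomorphic $1$-form on the entire affine part of $\cC$.

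Second, I extend $\omega$ across the unique point at infinity, namely the flex point $\p_\infty = (0:1:0)$. In the chart $y=1$ with coordinates $(x,z)$, the curve becomes $z = x^3 + axz^2 + bz^3$, and since $\partial\Phi/\partial z(0,1,0)=-1\ne 0$, the implicit function theorem gives $x$ as a local uniformizer at $\p_\infty$ with $z = x^3\bigl(1+O(x^4)\bigr)$. The coordinate change $x_{\rm old} = x/z,\ y_{\rm old} = 1/z$ transforms $\omega$ into
$$\omega \;=\; \frac{1}{2}\bigl(dx \,-\, x\,dz/z\bigr),$$
and substituting the expansion of $z$ yields $x\,dz/z = 3\,dx + O(x^4)\,dx$, so $\omega = -dx + O(x^4)\,dx$ at $\p_\infty$. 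Hence $\omega$ extends holomorphically to all of $\cC$ and remains nonzero at $\p_\infty$.

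Third, uniqueness follows from the fact that $\cC$ is a compact Riemann surface (being a closed subset of the compact $\bP^2(\C)$). If $\omega'$ is any other nowhere-zero holomorphic $1$-form, then $\omega'/\omega$ is a holomorphic function on $\cC$, hence constant by the maximum modulus principle. In fact, the same argument shows that every holomorphic $1$-form on $\cC$ is a constant multiple of $\omega$, since $\omega$ has no zeros and so $\omega'/\omega$ can have no poles. The only genuinely delicate step is the chart-change bookkeeping at $\p_\infty$: one must verify that the naive affine expression $dx/(2y)$ neither acquires a pole nor develops a zero at infinity; everything else is a one-line tangent calculation or an application of the maximum principle.
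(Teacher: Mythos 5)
Your proof is correct and follows essentially the same route as the paper's: the same two patched affine expressions $dx/y$ and $dy/(3x^2+a)$ on the standard normal form, the same chart change to $(X:1:Z)$ with the expansion $Z = X^3 + O(X^5)$ to check holomorphy and nonvanishing at the flex point at infinity, and the same compactness argument for uniqueness. The only differences are cosmetic (a factor of $2$ in the normalization and invoking Lemma~\ref{L-sing} rather than the singularity equations directly).
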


\begin{proof} We will use affine coordinates $(x:y:1)$,\, and take
 the curve in the standard  normal form $~y^2=x^3+ax+b~$, so that
\begin{equation}\label{E-dx&dy}
  2\,y\,dy~=~(3\,x^2+a)dx~. \end{equation}
Consider  the holomorphic 1-form\footnote
{Caution: This notation is not intended to suggest that $dw$ is the total
differential of a globally defined function. Of course we can integrate to
find a function which is locally well defined up to an additive constant;
 but the integral is not  well defined globally.}
$dw$ which is defined by
$$ dw~=~\frac{dx}{y}~\qquad{\rm whenever}\qquad y\ne 0~,$$
and by
$$ dw~=~\frac{2\,dy}{3\,x^2+a}\qquad{\rm whenever} \qquad 3\,x^2+a\ne 0~.
$$
(It follows from Equation~(\ref{E-dx&dy}) that these
two forms are equal when both are defined.)
The two denominators cannot both be zero since the equations 
$$~\Phi_x~=~\Phi_y~=~0~$$
would imply that  $\cC$ is singular.)
This form $dw$ is clearly well defined and non-zero at all points
of  $\cC$ which lie within the affine plane. Since the intersection of
$\cC$ with the line at infinity is the single flex point $(0:1:0)$,
it only remains to check what happens near this point.
To do this, we will work with alternative
affine plane in which $y=1$, setting 
$$\qquad x=X/Z~~~{\rm and}~~~ y=1/Z\quad{\rm so~ that}
\qquad(x:y:1)~=~(X:1:Z)~.$$
Using the equation
\begin{equation}\label{eq-cstr}
\Phi(X,1,Z)~=~ -Z+X^3+aXZ^2+bZ^3~=~0~,
\end{equation}
we see that $~\Phi_X(0:1:0)=0~$ and $~\Phi_Z(0:1:0)=-1$,
so that $X$ can be used as a local uniformizing parameter on $\cC$.
In fact,  we can
express $Z$ locally as a function of $X$ of the form  
$Z = cX^n + O(X^{n+1})$, with $n\ge 2$ since $Z=0$ is the tangent line. 
Substituting
this expression for $Z$ in the right hand side of the equation
$Z=X^3 + a X Z^2 +b Z^3$  it follows easily that $c=1$ and
$n=3$, so that
$$ Z = X^3 + O(X^5) ~,\quad{\rm and}\quad dZ=\big(3X^2+O(X^4)\big)dX~.$$
Now using the equation
$$ dw~=~\frac{dx}{y}~=~
\frac{d(X/Z)}{1/Z}~=~\frac{Z\,dX\,-\,X\,dZ}{Z} $$
it follows that
$$dw~=~ \big(-2\,+\,O(X^2)\big)\,dX
~.$$
Thus the holomorphic 1-form  $dw$ is smooth and non-zero, even at
the flex point $(0:1:0)$. Since any other holomorphic 1-form can be obtained
by multiplying $dw$ by a holomorphic function from
$\cC$ to $\C$, 
which is necessarily constant since $\cC$ is compact,  
this proves Lemma \ref{L-abel}.
\end{proof}
\medskip

\begin{lem}\label{L-cov}
Let $\cC$ be any compact Riemann surface which admits a nowhere zero
holomorphic 1-form $\eta$. Then the set of  integrals
$\oint_{\,\Lambda} \eta\in\C$,
where $\Lambda$ varies over all smooth closed loops in $\cC$,
forms a lattice $\Omega\subset\C$, and  $\cC$ is conformally  diffeomorphic
to the quotient Riemann surface $\C/\Omega$. 
\end{lem}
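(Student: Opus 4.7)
The plan is to pass to the universal cover and integrate $\eta$ to realize $\cC$ explicitly as a quotient of $\C$. Let $\pi\colon \widetilde{\cC}\to\cC$ be the universal cover and fix a lift $\tilde p_0$ of a basepoint. Since every holomorphic $1$-form on a Riemann surface is automatically $d$-closed, and since $\widetilde{\cC}$ is simply connected, the integral
\[
\phi(\tilde p)~:=~\int_{\tilde p_0}^{\tilde p}\pi^*\eta
\]
is path-independent and defines a holomorphic map $\phi\colon\widetilde{\cC}\to\C$. Because $\eta$ is nowhere zero, $\phi$ is a local biholomorphism.

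Next I would set up the equivariance. For each deck transformation $g\in\pi_1(\cC)$, the difference $\phi(g\cdot\tilde p)-\phi(\tilde p)$ is the integral of $\eta$ along the loop $\Lambda$ in $\cC$ represented by $g$, and this depends only on $g$ because $\eta$ is closed. This yields a homomorphism $\omega\colon\pi_1(\cC)\to(\C,+)$ whose image is precisely the set $\Omega$ of period integrals described in the statement; in particular $\Omega$ is a subgroup of $\C$, and $\phi$ intertwines the deck action with translation by $\Omega$.

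The hard part is upgrading $\phi$ from a local biholomorphism to a genuine covering map, after which it will automatically be a biholomorphism $\widetilde{\cC}\to\C$ since $\C$ is simply connected. I would handle this by a completeness argument. Pull back the Euclidean metric on $\C$ via $\phi$ to obtain a flat Riemannian metric on $\widetilde{\cC}$; by the equivariance of the previous step the deck group acts by isometries, so this metric descends to a flat Riemannian metric on $\cC$. Compactness of $\cC$ makes the quotient metric complete, and therefore the pulled-back metric on $\widetilde{\cC}$ is complete as well. Since a local isometry from a complete connected Riemannian manifold to another connected Riemannian manifold is necessarily a covering map, $\phi$ is a covering, and hence a biholomorphism.

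With $\phi$ now a biholomorphism intertwining the deck action with translation by $\Omega$, one obtains a conformal diffeomorphism $\cC\cong\C/\Omega$. Proper discontinuity of the deck action forces $\Omega$ to be a discrete subgroup of $\C$, and compactness of $\cC$ forces it to be cocompact; a discrete cocompact subgroup of $\C$ must have rank two, so $\Omega$ is a lattice, completing the proof.
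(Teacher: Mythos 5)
Your proposal is correct and follows essentially the same route as the paper: integrate $\eta$ on the universal cover to get a map to $\C$, observe it is a local isometry for the pulled-back flat metric, and use compactness of $\cC$ to conclude both that the map is onto (the paper phrases this via geodesic extension, you via the local-isometry-from-a-complete-manifold-is-a-covering theorem) and that the discrete period group $\Omega$ has rank two. Your write-up is if anything a little more explicit than the paper's about why the developing map is a bijection onto $\C$.
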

\smallskip

\begin{proof} (Compare \cite[p.~84]{Don}.)
Choose a base point $\p_0\in\cC$. Then the universal covering space 
$\widetilde\cC$ can be described as the set of 
all pairs $\big(\p, \{P\}\big)$ 
where $\p$ can be any point of $\cC$ and $\{P\}$ is any homotopy class of
smooth paths from $\p_0$ to $\p$. Given any such pair, 
we can integrate along 
any $P\in\{P\}$ to obtain a complex number $w=\int_P\eta\in\C$
which does not depend on the choice of $P$ within its homotopy class.
In other words, we have a well defined mapping
\begin{equation}\label{E-t}
\big(\p\,,\{P\}\big)~\mapsto w=\int_P\eta\qquad{\rm from}\quad 
\widetilde\cC\quad{\rm to}\quad\C~.
\end{equation}
Further, the total differential $dw$ of this function $w$
is just the 1-form $\eta$, lifted to the universal covering.

Using the flat Riemannian
metric $|dw|^2$, we see that this function (\ref{E-t}) is a conformal
isometry from $\widetilde\cC$ onto the complex numbers. In fact the inverse
map from $\C$ to $\widetilde\cC$ sends each straight line from the origin
in $\C$ to a corresponding geodesic in $\widetilde\cC$.

Now suppose that we have two different paths $P_1$ and $P_2$ from $\p_0$
to $\p$, yielding two complex numbers $w_1$ and $w_2$. Then the difference can
be expressed as
$$ w_1-w_2~=~\int_\Lambda \eta~, $$
where $\Lambda$ is the closed loop obtained by following $P_1$ from $\p_0$
to $\p$, and then following $P_2$ back to $\p_0$. Conversely, given any
closed loop $\Lambda$ from $\p_0$ to itself, we can first follow $P_1$
 and then follow $\Lambda$ to obtain a new path $P_2$ from $\p_0$ to $\p$.
This proves that two points in $\widetilde\cC$ map to the same point of $\cC$
if and only if the difference between their images in $\C$ differ by an element
of the additive group $\Omega\subset\C$.

Since the map from $\widetilde\cC$ to $\cC$ is a local diffeomorphism, it
 follows that $\Omega$ must be a discrete additive subgroup: that is, it cannot
contain non-zero elements arbitrarily close to zero. Furthermore, since
the quotient $\C/\Omega\cong\cC$ is compact, $\Omega$ must contain two
linearly independent elements. This proves Lemma \ref{L-cov}; and 
Theorem \ref{T-abel} then follows easily.
\end{proof}
\medskip

The converse assertion, that every flat torus ${\mathbb T}=\C/\Omega$ 
is conformally diffeomorphic
to a smooth cubic curve, is due to Weierstrass (\cite{W1}, \cite{W2}),
and arose from his study of doubly periodic functions. 
Since this result is widely known (see for example \cite[Sec.2]{La}),
we will give only a brief summary.
\smallskip

For any lattice $\Omega\subset\C$ the Weierstrass $\wp$-function is the unique
holomorphic $\Omega$-periodic map from $\C\ssm\Omega$ to $\C$ which has a pole
of the form
$$ \wp(w)~=~ 1/w^2 + o(1) \qquad {\rm as}\qquad w\to 0~.$$
This satisfies a differential equation of the form
$$ \big(\wp'(w)\big)^2~= ~ 4\wp(w)^3 \,-\,g_2\,\wp(w)\,-\,g_3~,$$
where the complex constants $g_2$ and $g_3$ can be computed from the lattice
$\Omega$. In fact,
$$ g_2~=~ 60\sum_{\omega\ne 0}\frac{1}{\omega^4}\quad{\rm and}\quad
g_3~=~ 140\sum_{\omega\ne 0}\frac{1}{\omega^6}~,$$
where $\omega$ ranges over all non-zero lattice elements. (Compare 
\cite[\hbox{p.83-84}]{Ser}.) Setting
$$ (X:Y:Z)~=~(\wp(w):\wp'(w):1)~,$$
this yields a conformal diffeomorphism\footnote{To check differentiability
near $w=0$, we can set $\wp(w)=w^{-2}+\epsilon(w)$ where $\epsilon(w)$ is
holomorphic. Then $w$ maps to
$$\big(\wp:\wp':1\big)=\Big(w^{-2}+\epsilon(w): -2w^{-3}+\epsilon'(w):1\Big)
=\Big(w+w^3\epsilon(w): -2+w^3\epsilon'(w): w^3\Big)~,$$
clearly yielding a local conformal diffeomorphism.}
from the torus ${\mathbb T}=
\C/\Omega$ onto the cubic curve
$$ Y^2~=~ 4\,X^3-g_2X-g_3~.$$ 
This can easily be transformed into our standard normal form by setting 
$$ Y=2y\quad{\rm and}\quad X=x~,\qquad{\rm with}\qquad 
 g_2~=~-4a\quad{\rm and}\quad g_3~=~-4b~.$$
Felix Klein showed that
 the $J$-invariant can be computed as a holomorphic 
function of the lattice parameter $\tau$, where 
$\Omega=\Z\oplus\tau\Z$ with $\Im(\tau)>0$. See for example \cite[p.90]{Ser}.
\smallskip

\begin{coro}\label{C-aut3}
The group $\Aut(\cC)\cong\Aut({\mathbb T})$ of conformal automorphisms
of the curve $\cC\cong{\mathbb T}$ can be described by a split exact sequence
$$ 1~\to~ N(\mathbb T)~\to~\Aut({\mathbb T})~\to~\Aut({\mathbb T},\,0)~\to~1~,$$
where the normal subgroup $N(\mathbb T)\cong{\mathbb T}$ of automorphisms
without fixed point can be identified with the group of translations
 of $\mathbb T\cong\C/\Omega$, and where the finite cyclic subgroup
$\Aut({\mathbb T},\,0)\cong\Aut(\cC,\,\p_0)$ is naturally isomorphic to the
group $\Aut(\bP^2,\,\cC,\,\p_0)$ of Corollary~$\ref{C-aut2}$.
\end{coro}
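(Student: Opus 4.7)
The plan is to pass to the torus model $\cC \cong \mathbb T = \C/\Omega$ supplied by Theorem~\ref{T-abel}, classify $\Aut(\mathbb T)$ by lifting to the universal cover $\C$, and then match the stabilizer of $0$ with $\Aut(\bP^2, \cC, \p_0)$ via the correspondence between $3$-torsion points of $\mathbb T$ and flex points of $\cC$.

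First I would lift any conformal automorphism $f: \mathbb T \to \mathbb T$ to a biholomorphism $\tilde f: \C \to \C$ of the universal cover. Any biholomorphism of $\C$ is affine, so $\tilde f(z) = \alpha z + \beta$ with $\alpha \in \C^\ast$, and $\tilde f$ descends to $\mathbb T$ precisely when $\alpha \Omega = \Omega$. The assignment $f \mapsto \alpha$ is then a surjective homomorphism onto the \emph{multiplier group} $M(\Omega) := \{\alpha \in \C^\ast : \alpha\Omega = \Omega\}$, and its kernel is the group of translations. Define $N(\mathbb T)$ to be this kernel; parametrizing a translation by its shift vector modulo $\Omega$ yields a canonical identification $N(\mathbb T) \cong \mathbb T$. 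The pure multiplications $z \mapsto \alpha z$ all fix $0 \in \mathbb T$ and form a subgroup of $\Aut(\mathbb T)$ that maps isomorphically onto $M(\Omega)$, which simultaneously splits the sequence
$$1 \to N(\mathbb T) \to \Aut(\mathbb T) \to M(\Omega) \to 1$$
and identifies $\Aut(\mathbb T, 0) \cong M(\Omega)$.

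Next I would verify that $M(\Omega)$ is finite cyclic. Any $\alpha \in M(\Omega)$ permutes the finite set of shortest non-zero lattice vectors, so $|\alpha| = 1$; since $\alpha$ also satisfies a monic integer polynomial obtained from its action on a $\Z$-basis of $\Omega$, it is a root of unity. A brief case analysis on the shape of $\Omega$ yields the classical trichotomy: $M(\Omega)$ is cyclic of order $2$ for a generic lattice, of order $4$ for the square lattice, and of order $6$ for the hexagonal lattice. These orders match the ones already computed for $\Aut(\bP^2, \cC, \p_0)$ in Corollary~\ref{C-aut2} (the cases $J \ne 0, 1$, then $J = 1$, then $J = 0$, respectively), which also pins down the correspondence between lattice type and value of $J$.

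Finally I would establish the natural isomorphism $\Aut(\mathbb T, 0) \cong \Aut(\bP^2, \cC, \p_0)$. One direction is immediate: restriction embeds $\Aut(\bP^2, \cC, \p_0)$ into $\Aut(\cC, \p_0) = \Aut(\mathbb T, 0)$. For the reverse direction, every element of $\Aut(\mathbb T, 0)$ is a group automorphism of $\mathbb T$ and hence preserves the subgroup of $3$-torsion points. Under the identification $\cC \cong \mathbb T$ with $\p_0 \leftrightarrow 0$, this nine-element subgroup corresponds to the nine flex points of $\cC$ (as will be explicit in the chord-tangent discussion of Section~\ref{S-CT}, but also verifiable directly from the Weierstrass parametrization $w \mapsto (\wp(w):\wp'(w):1)$, since a point on the curve is a flex precisely when $w$ is $3$-torsion). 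Hence every element of $\Aut(\mathbb T, 0)$ maps flex points to flex points, and by the extension principle stated at the start of Section~\ref{S-conf} it extends to a projective automorphism of $\bP^2$ preserving $\cC$ and fixing $\p_0$. The main obstacle is precisely this identification of $3$-torsion with flex points --- a classical fact that requires either anticipating the chord-tangent group law or performing a short explicit computation with $\wp$.
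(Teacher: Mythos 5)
Your analysis of $\Aut({\mathbb T})$ itself is correct and is essentially the paper's argument: the paper obtains the affine form of an automorphism by noting that its derivative is a holomorphic function on a compact surface and hence constant, where you instead lift to the universal cover, but the content --- kernel $=$ translations $\cong{\mathbb T}$, splitting by the multiplier group $M(\Omega)=\{\alpha:\alpha\Omega=\Omega\}$, and the finiteness and cyclicity of $M(\Omega)$ of order $2$, $4$ or $6$ --- is the same, and the paper leaves all of it as ``further details are easily supplied.''

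The gap is in your last step, the identification $\Aut({\mathbb T},0)\cong\Aut(\bP^2,\cC,\p_0)$. The ``extension principle'' you invoke (a conformal automorphism carrying flex points to flex points extends to a projective automorphism) is the second assertion of Corollary \ref{C-PE&CD}, which comes \emph{after} Corollary \ref{C-aut3} and whose proof in the paper uses exactly the part of Corollary \ref{C-aut3} you are trying to establish; as written your argument is circular. Your parenthetical order-counting remark is the right idea but is also incomplete on its own: the restriction embedding $\Aut(\bP^2,\cC,\p_0)\hookrightarrow\Aut({\mathbb T},0)$ together with Corollary \ref{C-aut2} shows that $J=0$ forces a hexagonal lattice and $J=1$ a square lattice, but it does not rule out that some curve with $J\ne 0,1$ might be conformally a square or hexagonal torus, which would make the inclusion proper. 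The missing input is that the conformal class determines $J$ --- available from the Weierstrass--Klein discussion immediately preceding the corollary, and this is precisely how the paper proves the non-circular half of Corollary \ref{C-PE&CD}. With that in hand, the square and hexagonal tori arise only from $J=1$ and $J=0$ respectively, the two cyclic groups have equal order in every case, and the embedding is onto. (Alternatively one can exhibit the generator of $\Aut({\mathbb T},0)$ projectively via the homogeneity relations $\wp(\alpha w)=\alpha^{-2}\wp(w)$ and $\wp'(\alpha w)=\alpha^{-3}\wp'(w)$, which realize $w\mapsto\alpha w$ as $(X:Y:Z)\mapsto(\alpha^{-2}X:\alpha^{-3}Y:Z)$ in the form of Lemma \ref{L-invar}; but transporting this to an arbitrary $\cC$ requires the same input, namely that $\cC$ is projectively equivalent to the Weierstrass model of its lattice.) Once this is repaired, your appeal to the identification of flex points with $3$-torsion --- itself a forward reference to Section \ref{S-CT} --- becomes unnecessary.
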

\smallskip

 \begin{proof} Note first that the derivative of any conformal automorphism of
 $\T$ is a holomorphic function from the compact surface
 $\T$ to $\C$, and hence must be constant.
Hence any automorphism must be linear. But the only linear maps without
fixed points are translations. Further details are easily supplied.
\end{proof}
\medskip

\begin{coro}\label{C-PE&CD} \it Two smooth cubic curves are projectively
equivalent if and only if they are conformally  diffeomorphic.
A given conformal diffeomorphism extends to an automorphism 
 of $\bP^2(\C)$ if and only if it maps flex points to flex points. 
\end{coro}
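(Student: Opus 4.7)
My plan is to handle the two biconditionals separately, using the $J$-invariant as a bridge between the projective and conformal classifications. For the first statement, the forward implication is automatic: any projective automorphism of $\bP^2(\C)$ is holomorphic and hence restricts to a conformal diffeomorphism from $\cC$ to $\cC'$. For the converse, I would use Theorem~\ref{T-abel} to write $\cC\cong\C/\Omega$ and $\cC'\cong\C/\Omega'$ with $\Omega'=\lambda\,\Omega$ for some $\lambda\in\C\ssm\{0\}$, then appeal to the Weierstrass realization sketched after Theorem~\ref{T-abel}: rescaling $\Omega$ by $\lambda$ rescales the standard-form coefficients as $(a,b)\mapsto(\lambda^{-4}a,\lambda^{-6}b)$, leaving the ratio $a^3/b^2$ and hence $J$ invariant. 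Corollary~\ref{C-classif} then supplies a projective equivalence.

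For the second statement, the \emph{only if} direction will follow directly from Lemma~\ref{L-hess} combined with Theorem~\ref{T-hess-d}: the flex set is cut out of $\cC$ by the Hessian locus, which transforms covariantly under projective changes of coordinates, so any projective equivalence sends flex points to flex points.

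The \emph{if} direction is where the real argument sits. Starting from a conformal diffeomorphism $f:\cC\to\cC'$ that preserves flex points, I would fix a flex point $\p_0\in\cC$, observe that $f(\p_0)$ is a flex point of $\cC'$, and use the first statement to pick some projective equivalence $g\in\Aut(\bP^2)$ with $g(\cC)=\cC'$. By Corollary~\ref{C-aut}, the group $\Aut(\bP^2,\cC')$ acts transitively on the nine flex points of $\cC'$, so after composing $g$ with a suitable element of that group I may assume $g(\p_0)=f(\p_0)$. Then $(g|_\cC)^{-1}\circ f$ is a conformal automorphism of $\cC$ fixing $\p_0$, and Corollary~\ref{C-aut3} identifies $\Aut(\cC,\p_0)$ with $\Aut(\bP^2,\cC,\p_0)$, supplying a projective extension $h$. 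Hence $f=(g\circ h)|_\cC$ extends to the projective automorphism $g\circ h$.

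The hard part will be making sure that the natural isomorphism $\Aut(\cC,\p_0)\cong\Aut(\bP^2,\cC,\p_0)$ of Corollary~\ref{C-aut3} is in fact realized by the restriction map, so that an abstract conformal symmetry fixing $\p_0$ genuinely globalizes to a projective one; this is the only place in the argument that is not purely formal. Everything else reduces to invoking Corollaries~\ref{C-classif} and~\ref{C-aut} together with the Weierstrass construction and the projective invariance of the Hessian.
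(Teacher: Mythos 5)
Your proof follows essentially the same route as the paper's: the forward implications are automatic, the converse of the first statement goes through the common $J$-invariant and Corollary~\ref{C-classif}, and the converse of the second composes $f$ with a projective equivalence (normalized via the transitivity of Corollary~\ref{C-aut} on flex points) so as to reduce to the identification $\Aut(\cC,\p_0)\cong\Aut(\bP^2,\cC,\p_0)$ of Corollary~\ref{C-aut3}. The subtlety you flag --- that this isomorphism is realized by the restriction map --- is precisely the step the paper dismisses as ``not difficult to check,'' so your argument is correct and matches the paper's.
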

 \smallskip

\begin{proof} If the two curves are projectively equivalent, then they
are certainly conformally diffeomorphic. Conversely, if they are 
conformally diffeomorphic, then it follows from
the discussion above that they have a common $J$-invariant, 
hence by Corollary \ref{C-classif} they are 
projectively equivalent. Any projective equivalence
between two curves certainly sends flex points to flex points. Conversely,
given any conformal equivalence from $\cC_1$ to $\cC_2$ which sends flex points
to flex points, we can choose a projective equivalence from $\cC_2$ to $\cC_1$.
The composition will then be a conformal automorphism of $\cC_1$ which sends
flex points to flex points. Using Corollary \ref{C-aut3}, it is then not
difficult to check 
that this composition is a projective equivalence from $\cC_1$ to itself, and
the conclusion follows. 
\end{proof}

\begin{rem}[\bf Birational Maps]\label{R-birat}   
In place of conformal 
diffeomorphisms, we could equally well work with the purely algebraic 
concept of birational maps. Let $\f=(f_1:f_2:f_3)$ be a non-zero triple of 
homogeneous polynomial maps $\C^3\to\C$ of the same degree,
well defined up to simultaneous multiplication by a non-zero complex constant.
Let ${\mathcal I}({\bf f})\subset\bP^2(\C)$ be the locus of common zeros:
$f_1=f_2=f_3=0$. Then the function
$~\f:\bP^2(\C)\ssm{\mathcal I}(\f)\to \bP^2(\C)$
defined by the formula
$$ (x:y:z) \mapsto \big(f_1(x,y,z):f_2(x,y,z):f_3(x,y,z)\big) $$
is called a \textbf{\textit{rational map}} of $\bP^2(\C)$. 

It will be convenient to use the phrase \textbf{\textit{almost everywhere}}
to mean ``{\sl except on a finite subset}''. 
If $\cC$ is a curve in projective space such that the intersection
$\cC\cap \I(\f)$ is finite, and if the image
$\f\big(\cC\ssm \I(\f)\big)$ is contained 
 in a curve $\cC'$, then we obtain an almost everywhere defined map from $\cC$
to $\cC'$. Two such almost everywhere defined maps will be called 
\textbf{\textit
{equivalent}} if they agree almost everywhere. An equivalence class of such
maps will be called a \textbf{\textit{rational map}} from $\cC$ to $\cC'$.
If a rational map has an inverse, so that the composition is the identity map
almost everywhere, then it is called a \textbf{\textit{birational map}} 
from $\cC$ to $\cC'$. 
Given a birational map, there are finite subsets $S\subset\cC$ 
and $S'\subset\cC'$ so that $\cC\ssm S$ maps to $\cC'\ssm S'$ by a conformal 
diffeomorphism. Since the ``singularities'' (as the word is used in complex
 function theory) at the points of $S$ are clearly 
removable, it follows 
 that every birational map between smooth curves extends to a 
uniquely defined conformal diffeomorphism. In particular, the birational map
can be assigned a unique well defined value at every point. 

If we combine this discussion with  Nagell's Theorem, as described in Remark 
\ref{R-nag}, then we obtain the following.

\begin{coro}\label{C-birat} Every conformal diffeomorphism between smooth
cubic curves  is birational. Hence
the group of all birational maps from  a smooth cubic $\cC$ to itself
can be identified with the Lie group $\Aut(\cC)$ consisting of
all conformal automorphisms of $\cC$.
\end{coro}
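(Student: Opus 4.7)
The plan is to reduce the first assertion to the single statement that every conformal automorphism of a smooth cubic is birational, and then to prove that statement by exploiting the splitting in Corollary~\ref{C-aut3}. Given a conformal diffeomorphism $\phi:\cC_1\to\cC_2$, Corollary~\ref{C-PE&CD} supplies a projective equivalence $\psi:\cC_1\to\cC_2$. This $\psi$ is the restriction of a linear automorphism of $\bP^2(\C)$, hence tautologically birational. Consequently it is enough to prove that the conformal automorphism $\psi^{-1}\circ\phi\in\Aut(\cC_2)$ is birational; that is, one reduces to showing $\Aut(\cC)\subseteq \mathrm{Birat}(\cC)$ for an arbitrary smooth cubic $\cC$.

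Next, fix a flex point $\p_0\in\cC$ and invoke the split exact sequence of Corollary~\ref{C-aut3}. The finite factor $\Aut(\cC,\p_0)$ is canonically identified with $\Aut(\bP^2,\cC,\p_0)$, so its elements are projective, hence birational. It therefore remains only to handle the normal subgroup $N(\T)$ of translations. For this I use Nagell's theorem (Remark~\ref{R-nag}) to pass via a birational equivalence to a model $\cC'$ in the standard normal form $y^2=x^3+ax+b$ with $\p_0$ sent to the flex at infinity. On $\cC'$, the chord-tangent construction of Section~\ref{S-CT} gives an abelian group law with the flex at infinity as identity, implemented by explicit polynomial formulas in the coordinates. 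Specializing one summand to a fixed point yields a rational, and hence birational, translation map on $\cC'$. Conjugating by the Nagell birational equivalence, every conformal translation on $\cC$ is also birational.

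Combining the two cases through the splitting shows that $\Aut(\cC)\subseteq \mathrm{Birat}(\cC)$; the reverse inclusion is already recorded in the paragraph preceding the corollary (birational self-maps of smooth curves extend to conformal diffeomorphisms). This proves the first sentence and identifies $\mathrm{Birat}(\cC)$ with $\Aut(\cC)$ as stated. The main obstacle I anticipate is not formal but conceptual: one must verify that the purely \emph{conformal} translation structure on $\cC'$ coming from Lemma~\ref{L-cov} actually coincides with the \emph{algebraic} group law coming from chord-tangent, so that conformal translations inherit the rational addition formulas. This can be checked by noting that both group laws are abelian, both take the flex at infinity as identity, and both are characterized by the relation ``three collinear points sum to zero'' (the conformal statement being that the Abelian differential of Lemma~\ref{L-abel} integrates to zero over the divisor of a line section), so uniqueness of such a group structure on an elliptic curve forces them to agree.
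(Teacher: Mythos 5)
Your argument is correct, but it takes a genuinely different route from the paper's. The paper handles a conformal diffeomorphism $f:\cC\to\cC'$ by conjugating it into the flex-point stabilizer: the full Nagell theorem (quoted without proof in Remark \ref{R-nag}) supplies a birational map $g:\cC'\to\cC''$ carrying $f(\p)$ --- which need not be a flex point --- to a flex point, a projective equivalence $h:\cC''\to\cC$ returns that flex point to $\p$, and the composition $h\circ g\circ f$, being a conformal automorphism fixing a flex point, is projective by Corollaries \ref{C-aut2} and \ref{C-aut3}; unwinding exhibits $f$ as a composition of birational maps, with no input from the group law. You instead factor $f$ through a projective equivalence (Corollary \ref{C-PE&CD}) and then split the resulting conformal automorphism, via Corollary \ref{C-aut3}, into a flex-stabilizing part (projective, hence birational) and a translation, which you make birational through the chord-tangent addition law. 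This trades the full strength of Nagell's theorem for the rationality of the group law: you need only the elementary Theorem \ref{T-Nag}, since your base point is already a flex point, but you must supply two facts. First, the addition formulas are \emph{rational} rather than polynomial as you write; this is immediate from the formula $r=-p-q-c_1/c_0$ in the proof in Section \ref{S-CT} that the chord-tangent map is holomorphic, since the $c_i$ there are rational in the coordinates of the two points. Second, the translations in $N(\T)$ must coincide with the chord-tangent translations $\p\mapsto\p+\q$; here your proposed appeal to an Abel-type ``line sections sum to zero'' characterization is more than is needed, because the existence proof of the group structure in Section \ref{S-CT} already shows that the normalized conformal diffeomorphism $\cC\to\T$ sending $\o$ to $0$ carries the chord-tangent sum to addition on the torus, which is exactly the required compatibility. (A small slip: $\psi^{-1}\circ\phi$ lies in $\Aut(\cC_1)$, not $\Aut(\cC_2)$.)
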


\begin{proof} First note that every projective equivalence is birational.
From the discussion above, we see that every birational map is a conformal
diffeomorphism.

Let $f:\cC\to\cC'$ be a conformal diffeomorphism between
smooth cubic curves, and let $\p\in\cC$ be a flex point.
By Nagell's Theorem there is a smooth curve  $\cC''$ and a birational map
$g:\cC'\to\cC''$ taking $f(\p)$ to a flex point $\p''\in\cC''$.
By Corollary \ref{C-PE&CD}
there exists a projective equivalence $h:\cC''\to \cC$, and
by Corollary \ref{C-aut2}
we may choose $h$ so that it maps $\p''$ to $\p$. Since the composition
$h\circ g\circ f$ maps $\p$ to itself, it
 follows by Corollary~\ref{C-aut2} that this composition is a 
projective equivalence. Since $g$, $h$, and $h\circ g\circ f$ are all
birational equivalences, it follows that $f$ is also.
\end{proof}

\end{rem}
\medskip

\begin{rem}\label{R-Vor}
One curious invariant of the lattice $\Omega$ is the tiling
of the complex plane by \textbf{\textit{Voronoi cells}}
 $V_\omega=\omega+ V_0$, where $\omega$ varies over $\Omega$, and where
$V_0=V_0(\Omega)$
is the compact convex polygon consisting of all $z\in\C$ such that
$$ |z|~=~\min_{\omega\in\Omega}|z-\omega|~.$$ 
This polygon $V_0$ is a canonically defined fundamental domain for the
\hbox{additive}
action of $\Omega$ on $\C$; and is a 
complete invariant for $\Omega$, since $\Omega$ is the additive group
generated by the reflections of zero in the edges of $V_0$. The shape of $V_0$
is evidently a complete invariant for the conformal diffeomorphism class of
$\cC\cong{\mathbb T}$ (where two polygons centered at the origin 
 have the same ``shape''
if a complex linear automorphism maps one to the other).
 In particular, the group $\Aut({\mathbb T},\,0)$ can be identified with
the group of rotational  symmetries of $V_0$. This has order 6 if $V_0$ is a
regular hexagon (with $J=0$),  
order 4 if $V_0$ is a square (with $J=1$), and order 2 otherwise.
In most cases $V_0$ is a non-regular hexagon (as in Figure \ref{F-Vor}). 
However, it is a rectangle if $J$ is real with $J>1$.

\begin{figure}[ht!]
\centerline{\includegraphics[width=2.3in]{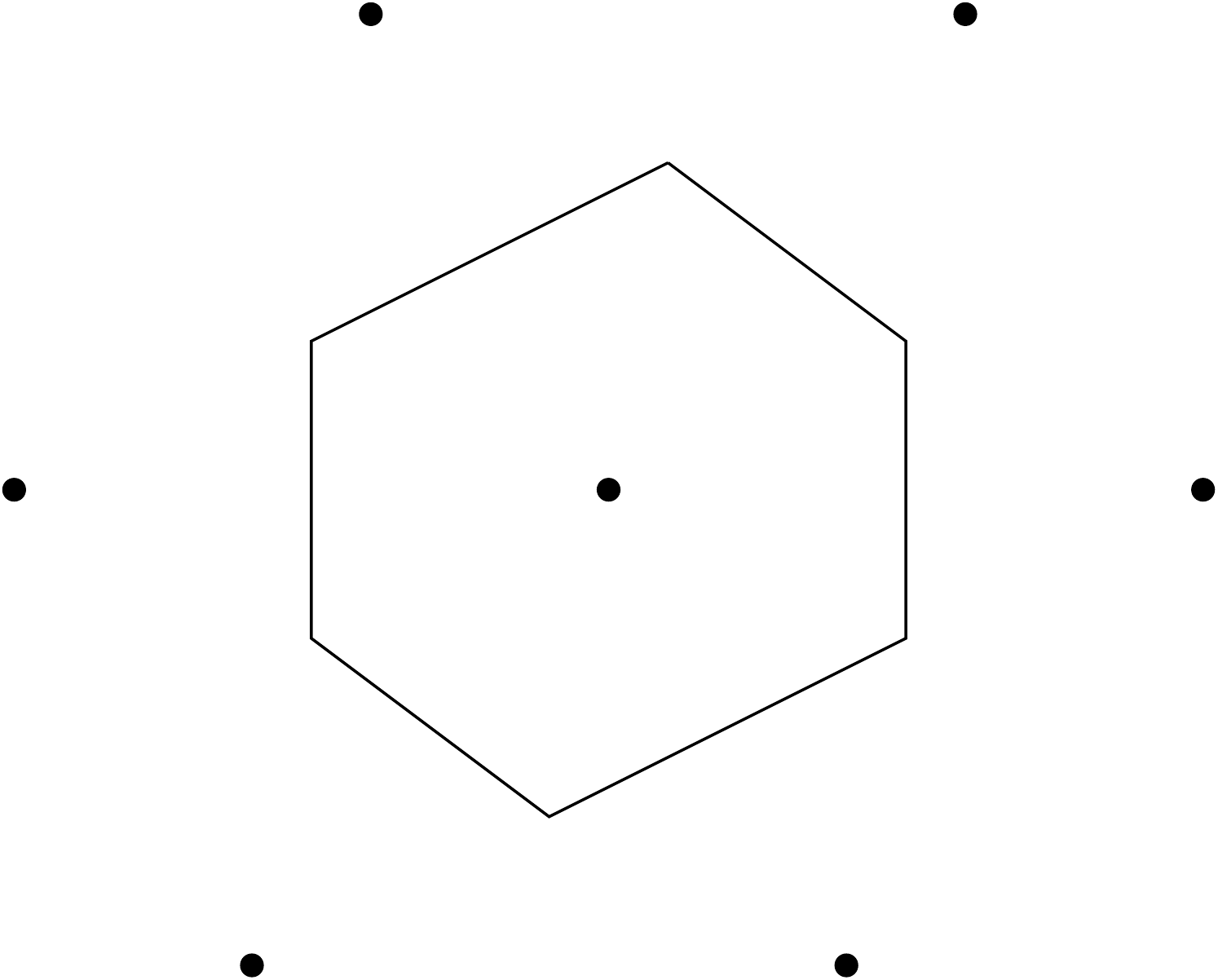}}
\smallskip
\mycaption{\label{F-Vor} \sf Voronoi hexagon for the lattice 
$\Z\oplus\tau\Z$ with $\tau=(3+4\,i)/5$.
The Voronoi polygon for any lattice has 180$^\circ$ rotational
symmetry. In this example, since the lattice has two generators of
equal length, it  also has an orientation reversing symmetry, which implies
that the $J$-invariant is real.}
\end{figure}

\end{rem}
\bigskip

\setcounter{lem}{0}
\section{The Chord-Tangent Map and Additive 
Group Structure.}\label{S-CT}

We first discuss the chord-tangent map.
Let $\cC\subset\bP^2(\C)$ be a smooth cubic curve.  Recall
that an arbitrary line $L\subset\bP^2$ intersects $\cC$ in exactly
 three points, counted with multiplicity. It will be convenient to
call an unordered list $(\p,\,\q,\,\r)$ of three (not necessarily
distinct) points of $\cC$ a \textbf{\textit{collinear triple}} if it can
be obtained in this way, indicating multiplicity by duplication.

\begin{definition}\label{D-ctm}
The correspondence $(\p,\,\q)\mapsto \r$, where $(\p,\q,\r)$ is any collinear 
triple, will be called the \textbf{\textit{chord-tangent map}} from
$\cC\times\cC$ to $\cC$, and will be denoted by
\begin{equation}\label{E-*}
 (\p,\,\q)~~\mapsto~~ \p*\q~.\end{equation}
Note that the equation $\p*\q=\r$ is invariant under any permutation of
$\p,\,\q,\,\r$, and simply means that $(\p,\,\q,\,\r)$ is a collinear triple.
\end{definition}

 For example, 
if $\p=\q\ne\r$, then the equation $\p*\p=\r$ means that 
$(\p,\,\p,\,\r)$ is a collinear triple, and hence that the tangent line to
 $\cC$ at $\p$ also intersects the curve $\cC$ at the point $\r$.

\begin{lem}
 \it For any smooth complex cubic $\cC$, 
this chord-tangent map $$(\p,\,\q)\to \p*\q$$
 is holomorphic as a map from $\cC\times\cC$ to $\cC$.
\end{lem}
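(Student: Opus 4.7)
The plan is to verify holomorphy locally near each $(\p_0,\q_0)\in\cC\times\cC$ by producing an explicit formula for $\r:=\p*\q$. First I would choose homogeneous coordinates so that the three points $\p_0$, $\q_0$, and $\r_0:=\p_0*\q_0$ all lie in the affine chart $z=1$, and work with the affine equation $\Phi(x,y)=0$. The off-diagonal case $\p_0\neq\q_0$ is straightforward: use local holomorphic parametrizations $\phi(u),\psi(v)$ of $\cC$ near $\p_0,\q_0$, parametrize the chord as $\phi(u)+s\big(\psi(v)-\phi(u)\big)$, substitute into $\Phi$ to obtain a cubic $c_3s^3+c_2s^2+c_1s+c_0=0$ whose roots $0,1,s_\r$ determine $s_\r=-c_2/c_3-1$ by Vieta; this depends holomorphically on $(u,v)$ as long as $c_3(0,0)\neq 0$, which is ensured by the choice of chart.

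The main obstacle is the diagonal $\p=\q$, where the chord degenerates to the tangent line. To handle this uniformly I would take a single parametrization $u\mapsto\phi(u)=(\alpha(u),\beta(u))$ covering a neighborhood of $\p_0=\q_0$ and use $\phi(u)$ and $\phi(v)$ for the two input points. The line through them — interpreted as the tangent line when $u=v$ — is parametrized by replacing the difference $\phi(v)-\phi(u)$ with the divided differences
\[ \tilde A(u,v)=\frac{\alpha(v)-\alpha(u)}{v-u},\qquad \tilde B(u,v)=\frac{\beta(v)-\beta(u)}{v-u}, \]
which extend holomorphically across $u=v$ by $\tilde A(u,u)=\alpha'(u)$, $\tilde B(u,u)=\beta'(u)$. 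Setting $(x,y)=\phi(u)+s(\tilde A,\tilde B)$, the two input points $\phi(u)$ and $\phi(v)$ correspond to $s=0$ and $s=v-u$ respectively, and the tangent direction appears automatically on the diagonal.

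Substituting into $\Phi(x,y)=0$ produces a cubic $c_3(u,v)s^3+c_2(u,v)s^2+c_1(u,v)s+c_0(u,v)=0$ with coefficients holomorphic in $(u,v)$. Since $\phi(u)\in\cC$ one has $c_0\equiv 0$, and since $\phi(v)\in\cC$ the value $s=v-u$ is a root of the residual quadratic $c_3s^2+c_2s+c_1$. Vieta's relation then yields
\[ s_\r(u,v)\;=\;-\,\frac{c_2(u,v)}{c_3(u,v)}\,-\,(v-u), \]
holomorphic whenever $c_3\neq 0$. The coefficient $c_3$ is the degree-three homogeneous part of $\Phi$ applied to the direction $(\tilde A,\tilde B)$, and vanishes precisely when one of the three intersections of the line with $\cC$ escapes to infinity in the chosen chart; since $\r_0$ lies in that chart, $c_3(0,0)\neq 0$, and $(u,v)\mapsto\phi(u)+s_\r(\tilde A,\tilde B)$ is holomorphic near $(0,0)$. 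The diagonal degeneration is really the only difficulty, and the divided-difference device dissolves it cleanly; as an alternative, one could establish holomorphy off the diagonal by the plain chord calculation and then appeal to Riemann's removable singularity theorem, since $(\p,\q)\mapsto\p*\q$ is manifestly continuous across the diagonal.
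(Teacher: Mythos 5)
Your argument is correct and follows essentially the same route as the paper: a divided-difference device to make the secant line (degenerating to the tangent) depend holomorphically on the pair across the diagonal, followed by Vieta's formula on the restricted cubic to extract the third intersection point holomorphically. The only cosmetic difference is that the paper applies the divided difference to the slope of the chord for the graph $y=f(x)$ and reads off the third root in the $x$-coordinate, whereas you apply it to the direction vector of a parametrized chord; both are the same idea.
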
\ssk

\begin{proof}
It is first necessary to show that the line $L$
determined by two points $\p$ and $\q$ in $\cC$ depends holomorphically on the
pair $(\p,\,\q)$. This is clear if $\p\ne\q$, but we must also consider
the limiting case as $\p$ and $\q$ tend to a common limit. 
Using affine coordinates $(x,\,y,\,1)$, and assuming that the slope
$s$ is finite, so that $L$ is defined by an equation $y=sx+c$, it clearly
suffices to prove that $s$ depends holomorphically on $\p$ and $\q$
as $\p$ and $\q$ tend to a common point. Describing the curve locally
by a holomorphic function $y=f(x)$, the slope of the line between
$\big(x_1,\,f(x_1)\big)$ and $\big(x_2,\,f(x_2)\big)$ is defined by 
$$ s(x_1,\,x_2)~=~\begin{cases} \displaystyle{\frac{f(x_1)-f(x_2)}{x_1-x_2}} & {\rm if}
\quad x_1\ne x_2\,,\quad{\rm but}\\[2ex]
f'(x) &  {\rm if}\quad x_1=x_2=x~. \end{cases}
$$
A standard power series argument shows that $s$ is holomorphic as a
function of two variables.

Let $\Phi(x,y,1)=0$ be the defining equation for the affine curve.
Assuming that we have chosen coordinates so that the point $\r=\p*\q$
also belongs to the affine plane, 
the function $\Phi(x,y,1)$ restricted to the line $y=sx+c$ determined by $\p$
 and $\q$ can be expressed as a cubic polynomial 
$$\Phi|_L~=~c_0x^3+c_1x^2+c_2x+c_3\qquad{\rm with}\qquad c_0\ne 0~,$$ 
where the coefficients $c_j$ depend
holomorphically on $\p$ and $\q$. Factoring this polynomial as 
\hbox{$c_0(x-p)(x-q)(x-r)$}, we have $p+q+r= -c_1/c_0$. Therefore 
$r=-p-q-c_1/c_0$
also depends holomorphically on $\p$ and $\q$. Thus the $x$-coordinate
of the required point $\r=\p*\q\in L$ varies holomorphically, so $\r$ does
also.
\end{proof}
\smallskip

\begin{rem}\label{R-ct/F}
 As in \S\ref{S-class}, it is interesting to see what happens over an arbitrary
subfield ${\mathbb F}\subset\C$
Assuming that $\cC$ is defined by equations with coefficients in $\bF$,
recall that $\cC_\bF$ is defined to be the intersection $\cC\cap\bP^2(\bF)$. 
If $(\p,\,\q,\,\r)$
is a collinear triple for $\cC$, with $\p$ and $\q$ in $\cC_\bF$,
then it is  
not hard to check that $\r\in\cC_\bF$ also\footnote{If a polynomial equation 
has coefficients in ${\mathbb F}$, note that then sum of its roots is also in
 ${\mathbb F}$.}. Thus the chord-tangent map
$(\p,\,\q)\mapsto \p*\q$ is well defined as a map from $\cC_\bF\times\cC_\bF$
to $\cC_\bF$.

In the case that $\bF$ is the field $\Q$ of rational numbers, the map
$\p\mapsto\p*\p$ was used by Diophantus of Alexandria in the third century
to construct new points of $\cC_\Q$ out of known ones. (For examples,
see \cite[pp. 24--25]{Cas}.)
\end{rem}
\smallskip

\begin{figure}[ht!]
\centerline{\includegraphics[height=1.8in]{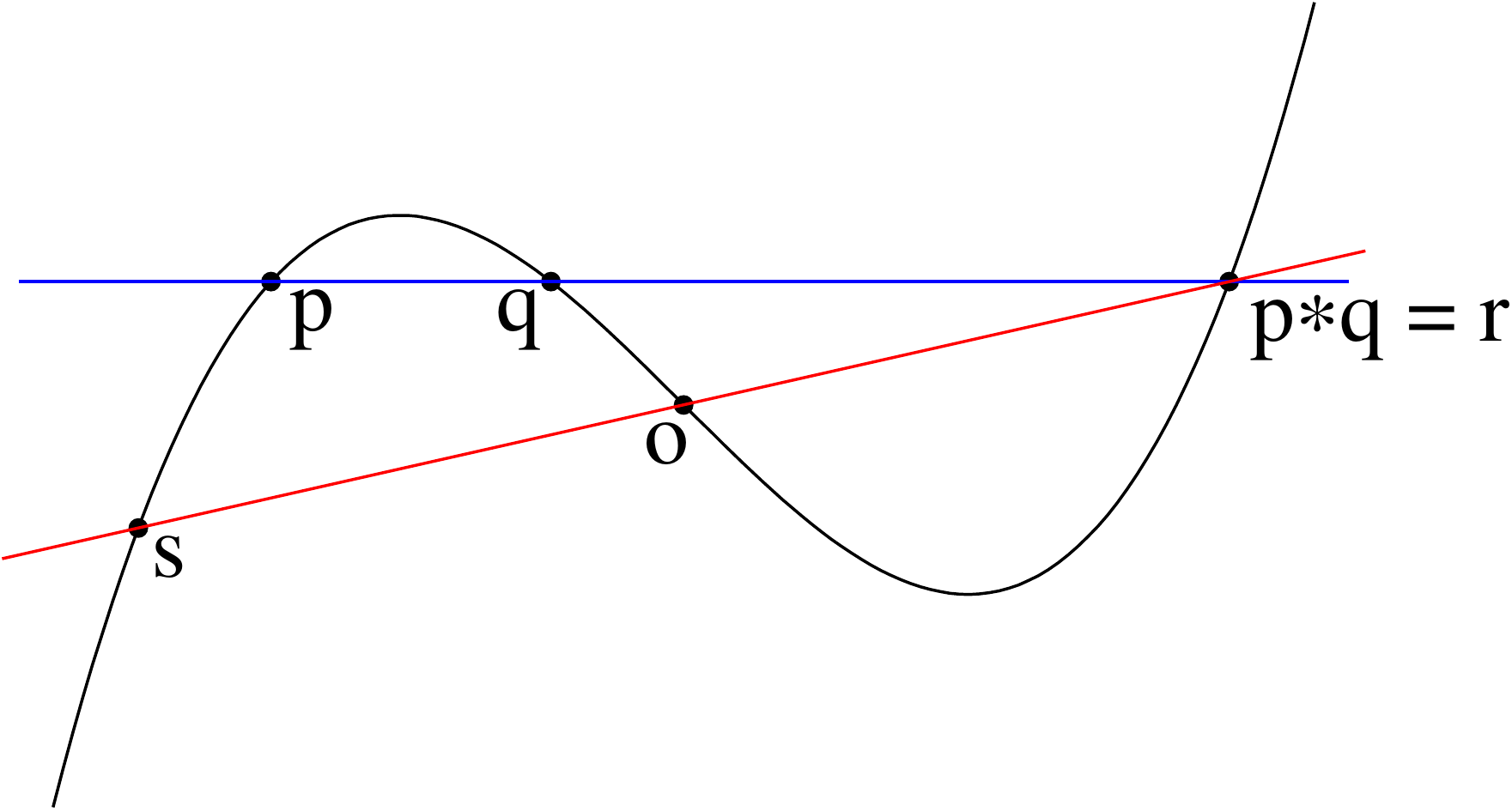}}
\mycaption{\label{F-sum} \sf Constructing the sum  $~\p+\q=\s$.}
\end{figure}
\smallskip

Next we will use the chord-tangent map to describe the additive group structure
of a smooth cubic curve.
\smallskip

\begin{lem}
 \it Let $\o$ be an arbitrarily chosen base 
point\footnote{ The term \textbf{\textit{elliptic curve}} 
is often reserved for a smooth cubic curve
together with  a specified base point.}
in the smooth cubic curve $\cC\subset\bP^2(\C)$.
Then $\cC$ admits one and only one additive group structure with the following
two properties:

\begin{itemize}
\item[\bf(1)] The base point $\o$ is the zero element, so that $\o+\p=\p$ for
any $~\p\in\cC$.

\item[\bf(2)] The triple $(\p,\,\q,\,\r)$ is collinear $($as defined above$)$
 if and only if the sum $~\p+\q+\r$ takes a constant value which
depends only on the choice of $\o$.
\end{itemize}
\end{lem}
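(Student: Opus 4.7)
The plan is to construct the group law by transporting the natural addition on the torus model $\C/\Omega$ of $\cC$ (Theorem~\ref{T-abel}) and then verify the two required properties; uniqueness will fall out of an explicit formula for $\p+\q$ in terms of the chord-tangent map.

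Fix a conformal diffeomorphism $\phi:\cC\to\C/\Omega$. The heart of the argument, essentially Abel's theorem for plane cubics, is to produce a constant $\kappa\in\C/\Omega$ such that
$$\phi(\p)+\phi(\q)+\phi(\r)~=~\kappa$$
for every collinear triple $(\p,\q,\r)$ on $\cC$. I would prove this by comparing two lines $L_0,L_1\subset\bP^2$ with defining linear forms $\ell_0,\ell_1$: the rational function $\ell_0/\ell_1$ restricts to a meromorphic function on $\cC$ with divisor $(L_0\cdot\cC)-(L_1\cdot\cC)$. Transporting via $\phi$ to $\C/\Omega$, a short period calculation shows that any meromorphic function on a torus has divisor summing to zero, so the two triple sums agree.

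Given $\o\in\cC$, define
$$\p+\q~:=~\phi^{-1}\!\big(\phi(\p)+\phi(\q)-\phi(\o)\big).$$
This is an abelian group law with $\o$ as identity, settling (1). For (2), when $(\p,\q,\r)$ is collinear the key lemma gives $\p+\q+\r=\phi^{-1}(\kappa-2\phi(\o))$, a point depending only on $\o$. Conversely, for any $\p,\q\in\cC$ the chord-tangent map yields a unique $\r':=\p*\q$ with $(\p,\q,\r')$ collinear; by the forward direction $\p+\q+\r'$ equals the same constant, so if $\p+\q+\r$ equals it too, cancellation in the group gives $\r=\r'$ and hence collinearity of $(\p,\q,\r)$.

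For uniqueness, let $+$ be any group law on $\cC$ satisfying (1) and (2), with constant value $c\in\cC$. Applying (2) to $(\o,\o,\o*\o)$ and using $\o+\o=\o$ yields $c=\o*\o$, so $c$ is determined by $*$ and $\o$. For arbitrary $\p,\q$, set $\r=\p*\q$; then $\p+\q+\r=c$, and also $(\p+\q)+\r+\o=c$ by property (1), so by the reverse direction of (2) the triple $(\p+\q,\,\r,\,\o)$ is collinear, forcing
$$\p+\q~=~(\p*\q)*\o.$$
Since this formula uses only the chord-tangent map and $\o$, any group law satisfying (1) and (2) is unique and coincides with the one constructed above, matching the picture in Figure~\ref{F-sum}. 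The main obstacle is the Abel-type constancy statement in the second paragraph; once in hand, the rest is bookkeeping.
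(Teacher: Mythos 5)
Your proof is correct, and your uniqueness argument (deriving $\p+\q=(\p*\q)*\o$ from properties (1) and (2)) is essentially identical to the paper's. The existence half, however, is organized quite differently. You define the sum analytically, by transporting the translation group of $\C/\Omega$ through the uniformization $\phi$, so the group axioms are automatic; the entire burden then falls on the Abel-type statement that $\phi(\p)+\phi(\q)+\phi(\r)$ is constant over collinear triples, which you correctly reduce to the fact that the divisor of the meromorphic function $\ell_0/\ell_1$ on the torus sums to zero modulo $\Omega$ (the standard $\oint z\,f'(z)/f(z)\,dz$ period computation). The paper instead defines the sum geometrically by the formula $\p+\q=(\p*\q)*\o$ forced by uniqueness, so that properties (1) and (2) and commutativity are immediate from the chord-tangent construction, and the only nontrivial point is associativity; this it obtains by observing that $\p\mapsto\p+\q$ is a holomorphic, fixed-point-free self-map of $\cC$, hence corresponds under uniformization to a translation of $\C/\Omega$, which simultaneously proves associativity and identifies the two group laws. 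The trade-off: your route makes the classical coincidence of the geometric and analytic group laws explicit via Abel's theorem, at the cost of importing divisor theory on the torus; the paper's route is more self-contained given its Section 4, relying only on the holomorphy of the chord-tangent map and the rigidity of fixed-point-free automorphisms of a torus. Both are complete, and your converse direction of (2) (cancellation against $\r'=\p*\q$) is handled correctly.
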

\smallskip

{\bf Proof of uniqueness.} Assume that such a group structure exists.
For any $\p$ and $\q$, let $\r=\p*\q$ and let
${\bf s}=\r*\o$ as in Figure \ref{F-sum}, using the notation (\ref{E-*}).
 Then by Property {\bf(2)} we have the identity
$$ \p+\q+\r~=~\r+\o+{\bf s}~.$$
Canceling the $\r$'s and using Property {\bf(1)}, it follows that $\p+\q=\s$, 
or in other words
\begin{equation}\label{E-sum}
 \p+\q~=~(\p*\q)*\o~.
\end{equation}
This proves uniqueness.
\smallskip

\begin{rem} 
The constant $\p+\q+\r$ in Property {\bf (2)} is necessarily equal to $\o*\o$,
as we see by considering the collinear triple $\o,\, \o,\, \o*\o$. Similarly,
since $~\p,\,~ \o*\o,\,~ (\o*\o)*\p~$ forms a collinear triple,
we see that the additive inverse $-\p$ is equal to $(\o*\o)*\p$.

\end{rem}
\smallskip

{\bf Proof of existence.} Define the sum operation by the formula
(\ref{E-sum}), setting $\r=\p*\q$ and $\p+\q=\r*\o$
as illustrated by Figure~\ref{F-sum}.   Note the 
identity $(\p*\q)*\q=\p$ for all
$\p$ and $\q$. In particular, taking $\q=\o$, we have
$$\p+\o~=~(\p*\o)*\o~=~\p$$
for all $\p$. 
Thus $\o$ is indeed
a zero element for the sum operation. 

For any collinear triple$(\p,\,\q,\,\r)$, as in the diagram, we can compute
the sum
$$(\p+\q)+\r~=~\s+\r~=~(\s*\r)*\o~=\o*\o~,$$
which is constant, as required.

This sum operation is clearly commutative.
Over a general field, the proof of associativity is somewhat tricky.
(Compare \cite{Cas}.) However, in the complex case it is quite easy: First note
that for fixed $\q\ne\o$ the mapping $\p\mapsto\p+\q$ from $\cC$ to itself
has no fixed points. In fact, with $\r*\p=\q$ and $\r*(\p+\q)=\o$ as in
Figure \ref{F-sum}, the equation $\p=\p+\q$ would imply that $\q=\o$.

Now choose a conformal diffeomorphism 
$\psi:\cC\stackrel{\cong}{\longrightarrow}\bT$
to the appropriate torus $\bT=\C/\Omega$, normalized by the requirement that
$\psi(\o)=0$. Then translation by $\q\ne\o$ on $\cC$ corresponds to a fixed 
point free conformal diffeomorphism from $\bT$ to itself which maps zero to
$\psi(\q)$. But the only such isomorphism is the translation by $\psi(\q)$.
It follows easily
that the transformation $\psi$ is not only a conformal diffeomorphism
but also preserves the sum operation. Therefore the sum is associative;
 and $\psi$ is an isomorphism of additive groups. 
\qed
\bigskip

\begin{figure}[h!]
\centerline{\includegraphics[height=2.5in]{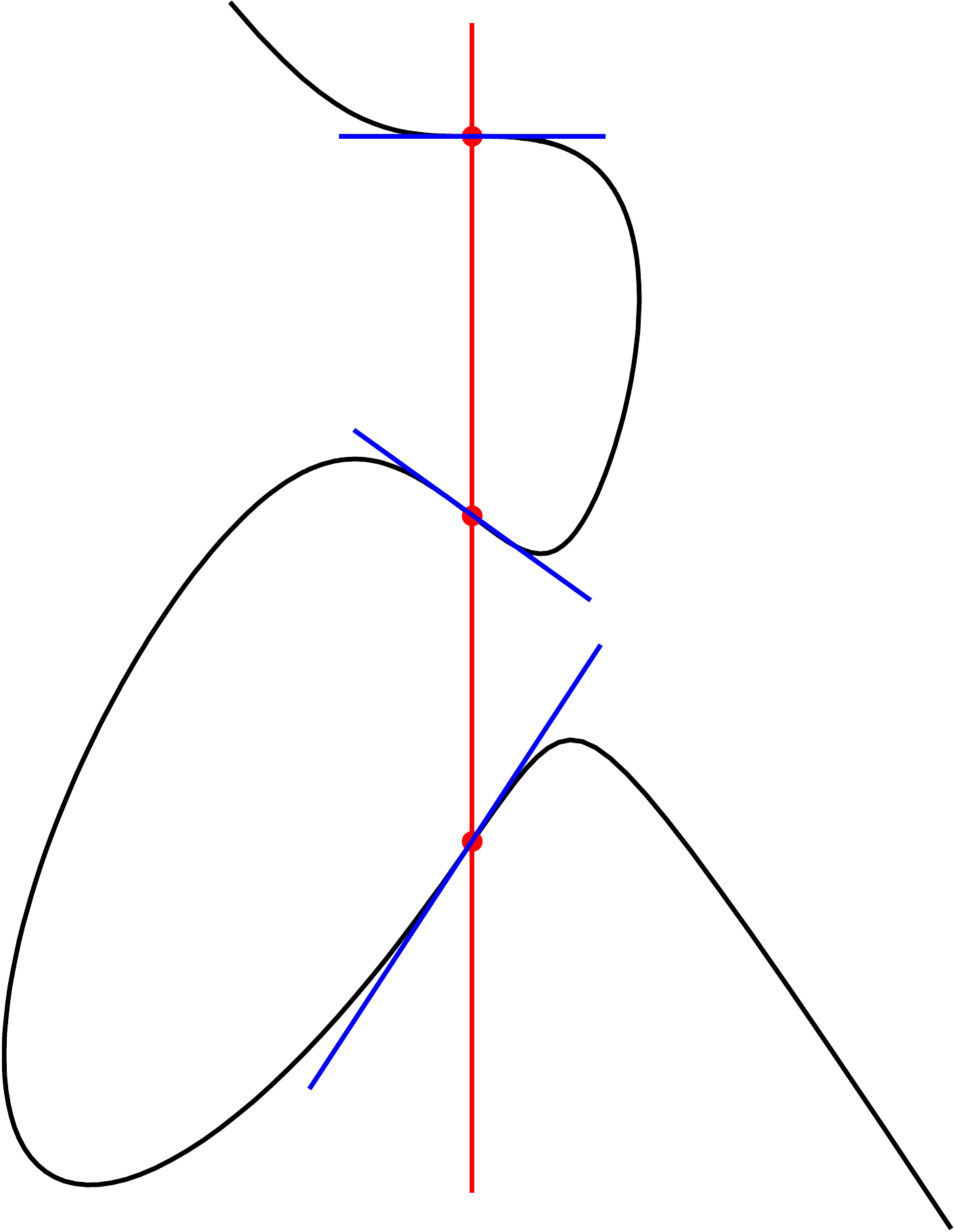}}
\mycaption{\sf The line between two distinct
 flex points always intersects $\cC$ in a
third flex point.  
\label{fig-flex}}
\end{figure}
\smallskip

\begin{figure}[ht!]
\centerline{\includegraphics[width=2.6in]{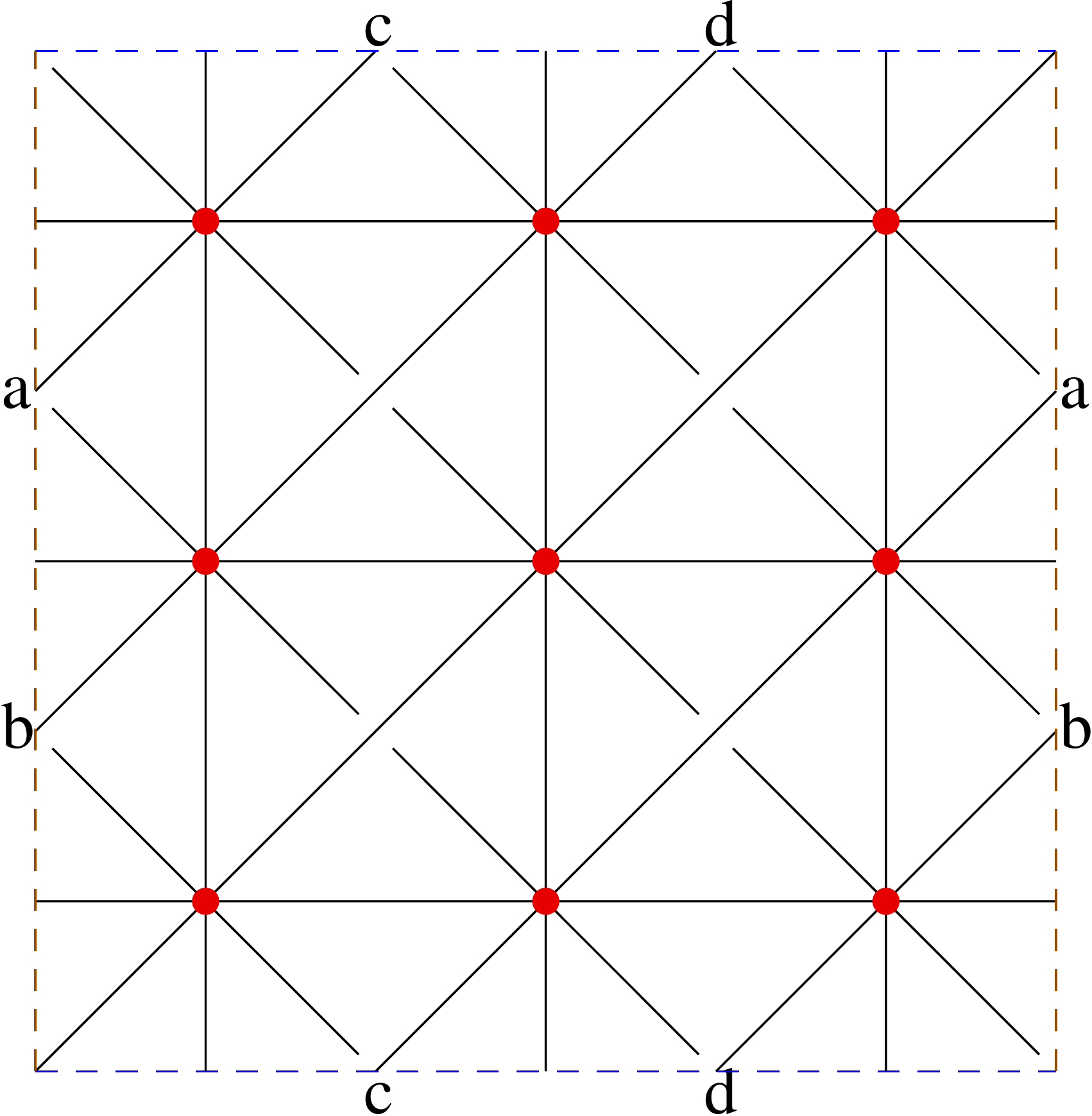}}
\mycaption{\sf \label{F-flexgrid}  A schematic picture of
the Hesse configuration consisting of nine flex points
together with the twelve lines joining them, placed on a square with opposite
sides identified. $($Compare \cite[Lehrsatz 12]{H2}, as well as
 $\cite{AD}.)$ This configuration has the nice property that
any two points determine a line and any two lines determine a point.
This configuration cannot be realized by straight lines in $\R^3$,
but can be more or less realized on a flat torus, as illustrated.}
\end{figure}

\begin{rem}\label{R-flexbase} 
If $\o \in \cC_{\mathbb F}$ for some subfield 
${\mathbb F}\subset {\mathbb C}$, then it follows that $\cC_{\mathbb F}$
is a subgroup of $\cC$.
This construction is particularly convenient when  $\cC_{\mathbb F}$ has a flex
point. In this case,  we can choose a flex point
as base point $\o$, so that $\o*\o=\o$, and so that
$\p+\q+\r~=~\o$ for any collinear triple. As an example, with this
choice the classical
``tangent process''  $\p\mapsto \p*\p$ is given by the formula
$$\p~~\mapsto~-2\,\p~.$$
One important consequence is that: {\sl the line joining any two distinct flex
points must contain a third flex point.}
(Compare Figure~\ref{fig-flex}.) With this choice of base point,
the flex points are precisely the elements of order three, satisfying
$\p+\p+\p=\o$ within the additive group. In the complex case, this additive
group of flex points has order nine, and hence, is isomorphic to
 $\Z/3\oplus\Z/3$.
\end{rem}
\medskip

\begin{rem}\label{R-Hconfig}
It follows easily that every smooth complex cubic contains a configuration
of nine flex points which joined by twelve lines, where every two
 points determine a line and every two lines determine a point.
This ``\textbf{\textit{Hesse configuration}}'' can never be
realized by real\footnote{Remember that three generic points on a complex line
lie on a real circle, not on a real line.}
 straight lines, even in a high dimensional real space. However,
it can almost be realized on a flat torus,   as 
illustrated schematically in Figure  \ref{F-flexgrid}.
\end{rem}

\bsk

\setcounter{lem}{0}
\section{Real Cubic Curves.}\label{S-R} 
This section is concerned with cubic curves $\cC_\R\subset\bP^2(\R)$
defined by equations $\Phi(x,y,z)=0$ with real coefficients.
We will describe the curve $\cC_\R$ as \textbf{\textit{smooth and irreducible}}
if the locus $\cC_\R$ itself contains no singular points and contains
no line.\footnote{Thus we do not allow examples such as $\Phi(x,y,z)
=x(x^2+y^2+z^2)$. In this example, the real locus is just a non-singular line
$x=0$; but the complex locus has singular points at $(0:\pm i:1)$ where the 
two irreducible components intersect.} 
{\it This is equivalent to the requirement that  the associated full complex
locus $\cC\subset\bP^2(\C)$ must have no singular points.} 
In fact, if $\cC$ has just
one singular point, then it must be invariant under the complex
conjugation map $(x:y:z)\leftrightarrow(\overline x:\overline y:\overline z)$,
and hence must belong to $\cC_\R$. If there are two
complex conjugate singular points, then the complex line joining them
must have intersection multiplicity at least two with each point, hence this
entire line must be contained in the curve $\cC$. Since this line is
self-conjugate, its intersection  with $\bP^2(\R)$ will be a line in $\cC_\R$.

The problem of classifying real cubic curves 
was studied already by Isaac 
Newton (but in the affine plane; see \cite{Ne} and compare \cite[p. 284]{BK}).
In general, the  projective classification of real curves is 
parallel to the complex classification, however there are important differences.
In looking at pictures of real cubic curves, 
it is important to remember that the real projective plane is a non-orientable
manifold, and that every real cubic curve has a non-orientable neighborhood,
which can never be completely pictured within an affine plane (Remark
\ref{R-top}).
\medskip

\begin{lem}\label{L-realflex}
Every smooth irreducible real cubic curve contains a flex point.
\end{lem}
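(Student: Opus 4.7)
The plan is to exploit the parity of the number of complex flex points. By Theorem \ref{T-9flex}, the full complex curve $\cC\subset\bP^2(\C)$ associated with a smooth irreducible real cubic $\cC_\R$ has exactly nine flex points, cut out on $\cC$ by the auxiliary curve $\cH_\Phi=0$. Since $\Phi$ has real coefficients, inspection of the definition (\ref{E-hessian}) shows that the Hessian polynomial $\cH_\Phi$ also has real coefficients. Hence both $\{\Phi=0\}$ and $\{\cH_\Phi=0\}$ are invariant under the complex-conjugation involution
\[ \sigma:(x:y:z)~\longmapsto~(\bbar x:\bbar y:\bbar z) \]
of $\bP^2(\C)$, and therefore so is their intersection $\mathrm{Flex}(\cC)$.

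First I would note that the hypothesis ``smooth and irreducible'' on $\cC_\R$ is exactly what guarantees, by the argument given in the paragraph preceding the lemma, that the complex curve $\cC$ is itself smooth; this is needed in order to apply Theorem \ref{T-9flex} and conclude that $\#\mathrm{Flex}(\cC)=9$. Next, since $\sigma$ is an involution acting on the nine-element set $\mathrm{Flex}(\cC)$, the orbits have sizes $1$ or $2$. If every orbit had size $2$, the total count would be even, contradicting $9$. Therefore at least one flex point is fixed by $\sigma$, that is, has real homogeneous coordinates, and so lies in $\cC_\R\cap\mathrm{Flex}(\cC)$.

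The only step requiring a moment of care is the observation that a $\sigma$-fixed point of $\bP^2(\C)$ really does lie in $\bP^2(\R)$: if $(x:y:z)=(\bbar x:\bbar y:\bbar z)$ then there is some $\lambda\in\C^\times$ with $\bbar x=\lambda x$, $\bbar y=\lambda y$, $\bbar z=\lambda z$, and $|\lambda|=1$; choosing a square root $\mu$ with $\mu^2=\lambda$, the rescaled triple $(\mu x,\mu y,\mu z)$ is fixed coordinatewise by conjugation and so lies in $\R^3\setminus\{0\}$. This is a standard remark, and is the only obstacle worth mentioning; the rest of the argument is a one-line parity count. The resulting real flex point is automatically non-singular, since by smoothness $\cC$ has no singular points at all.
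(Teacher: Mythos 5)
Your proof is correct and follows essentially the same route as the paper: the nine complex flex points are permuted by the complex-conjugation involution, and an involution on a nine-element set must have a fixed point, which then lies in $\cC_\R$. The extra details you supply (that $\cH_\Phi$ has real coefficients, and that a conjugation-fixed point of $\bP^2(\C)$ admits real homogeneous coordinates) are correct and merely make explicit what the paper leaves implicit.
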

\smallskip

\begin{proof}[Proof of Lemma \ref{L-realflex}]
Since the full complex curve $\cC$ is smooth, it has nine
flex points. The complex conjugation map from $\cC$ to itself, with fixed
point set $\cC_\R$, must permute these nine points. Since it is an involution,
 it must fix at least one of them.
\end{proof}
\medskip

Thus it follows from Theorem \ref{T-Nag} that we can put $\cC_\R$ into the
\hbox{standard} form $$y^2~=~x^3+ax+b$$ by a real projective transformation.
In particular, it follows that the invariant 
$~J(\cC_\R) =4a^3/\big(4a^3+27b^2\big)~$ is a real number.
\medskip

\begin{lem}\label{L-real-invar}
For each $J\in\R$ there are two essentially different smooth irreducible
real cubic curves. A complete invariant for smooth
real curves in this normal form, up to real projective equivalence,
 is provided by:
\begin{quote}
$\bullet$ this invariant $~J(\cC_\R)~$  together with

$\bullet$ the sign of $b$ if $b\ne 0$, or

$\bullet$ the sign of $a$ if $b=0$.
\end{quote}
\end{lem}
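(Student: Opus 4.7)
The plan is to reduce the problem to Lemma~\ref{L-invar} and carry out an elementary sign analysis. By Corollary~\ref{C-classif} the value of $J$ is already a projective invariant, so I need only classify the smooth normal-form curves having a fixed $J\in\R$.

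Suppose we have two curves $y^2 = x^3+ax+b$ and $y^2 = x^3+Ax+B$ in standard normal form, and a real projective equivalence $\phi$ between them. The first task is to arrange that $\phi$ fixes the flex point $(0:1:0)$ at infinity; after that Lemma~\ref{L-invar} applies directly. I would achieve this by post-composing $\phi$ with a suitable real projective automorphism of the target curve, which requires that the real automorphism group of a standard-form curve act transitively on its real flex points. Using the additive group structure of Section~\ref{S-CT}, the three real flex points of $\cC_\R$ form the 3-torsion subgroup of $\cC_\R$, and the translation subgroup of $N$ from Lemma~\ref{L-hess-aut} acts on this 3-torsion by translations, hence simply transitively. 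This is the main technical step.

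Once $\phi$ fixes $(0:1:0)$, Lemma~\ref{L-invar} over $\bF=\R$ gives $A = t^4a$ and $B = t^6b$ for some nonzero $t\in\R$. Setting $s=t^2>0$ the transformation simplifies to $(a,b)\mapsto(s^2 a,\,s^3 b)$, and since $s^2,s^3>0$, the sign of $a$ (when $a\ne 0$) and the sign of $b$ (when $b\ne 0$) are preserved, which proves necessity. For sufficiency, suppose two normal forms share the same $J$ and the same sign data. If $b\ne 0$, set $s = (B/b)^{1/3}>0$; then $B = s^3 b$, and the identity $J(a,b)=J(A,B)$ forces $A^3 = (s^2 a)^3$, so matching real signs yield $A=s^2a$. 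If $b=0$ then $J=1$ and necessarily $B=0$, and I take $s=\sqrt{A/a}$, which is real precisely when $\mathrm{sign}(a)=\mathrm{sign}(A)$.

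For the count ``exactly two classes per $J\in\R$'', a short case analysis suffices. When $J=0$ we have $a=0\ne b$ and the sign of $b$ gives two classes; when $J=1$ we have $b=0\ne a$ and the sign of $a$ gives two classes; when $J\ne 0,1$ both $a,b$ are nonzero, and the identity $27b^2 = 4a^3(1-J)/J$ forces $\mathrm{sign}(a)=\mathrm{sign}\bigl(J(1-J)\bigr)$, leaving only $\mathrm{sign}(b)$ free and again producing two classes. The main obstacle is the transitivity step flagged in the second paragraph; everything else is bookkeeping from Lemma~\ref{L-invar} together with the positivity of even powers of real numbers.
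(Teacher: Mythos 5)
Your overall route is the same as the paper's: everything reduces to Lemma~\ref{L-invar} and a sign analysis of $(a,b)\mapsto(t^4a,\,t^6b)$ with $t\in\R^*$, and your sufficiency computation (setting $s=(B/b)^{1/3}$, extracting $A=s^2a$ from equality of $J$, and the count of exactly two classes per real $J$) is correct and in fact more complete than the paper's closing ``the conclusion then follows easily.'' You also correctly isolate the one point the paper passes over in silence: a real projective equivalence between two curves in standard normal form need not send the flex at infinity to the flex at infinity, so before Lemma~\ref{L-invar} can be invoked one must know that the real projective automorphisms act transitively on the three real flex points.

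Your justification of that transitivity step, however, does not work as written. The group $N\cong\Z/3\oplus\Z/3$ of Lemma~\ref{L-hess-aut} has nine elements and acts simply transitively on all nine complex flex points; it does not act on the set of three real ones (most of its elements carry a real flex point to a non-real one), and its elements are a priori complex projective transformations, whereas you need a \emph{real} automorphism of $\bP^2(\R)$. There is also a latent circularity in passing to the Hesse picture here, since the real Hesse reduction is Theorem~\ref{T-realH}, which is proved after, and using, the present lemma. The gap is fixable with the complex theory already in hand: given real flex points $\p_1,\p_2$, let $g$ be the unique element of $N\subset\Aut(\bP^2,\cC)$ with $g(\p_1)=\p_2$ (Corollary~\ref{C-aut2}). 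Since $N$ is characterized intrinsically as the fixed-point-free automorphisms together with the identity, it is normalized by complex conjugation $\sigma$; then $\sigma g\sigma$ lies in $N$ and also sends $\p_1$ to $\p_2$, so by simple transitivity $\sigma g\sigma=g$, i.e.\ $g$ commutes with conjugation and hence is real. With this repair your argument is complete.
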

\smallskip

(Note that $a$ and $b$ cannot both be zero since $\cC_\R$ is smooth.)
\smallskip

\begin{proof} According to Lemma \ref{L-invar}, the only allowable
transformations replace the pair of coefficients $(a,\,b)$ by $(t^4a,\,t^6b)$
for some non-zero real number $t$. Since $t^4>0$ and $t^6>0$, the signs of
$a$ and $b$ are both invariants. However, if we are given both $b$ and $J$
then we can solve uniquely for $a^3$, provided that $b\ne 0$, so the sign of
$a$ is uniquely determined. The conclusion then follows easily. 
\end{proof}
\medskip

\begin{figure}[ht!]
\centerline{\includegraphics[width=1.5in]{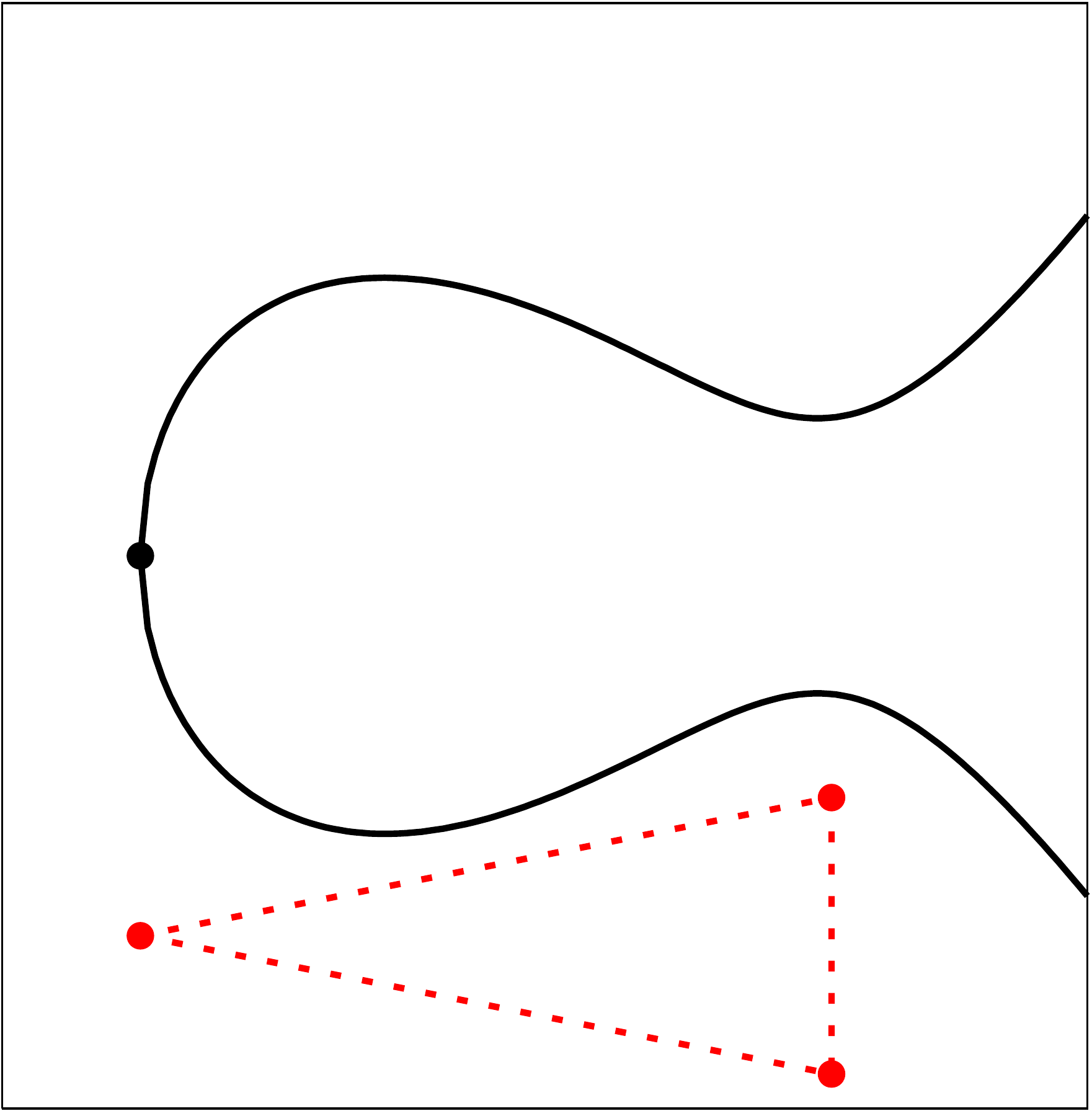}\qquad\qquad
\includegraphics[width=1.5in]{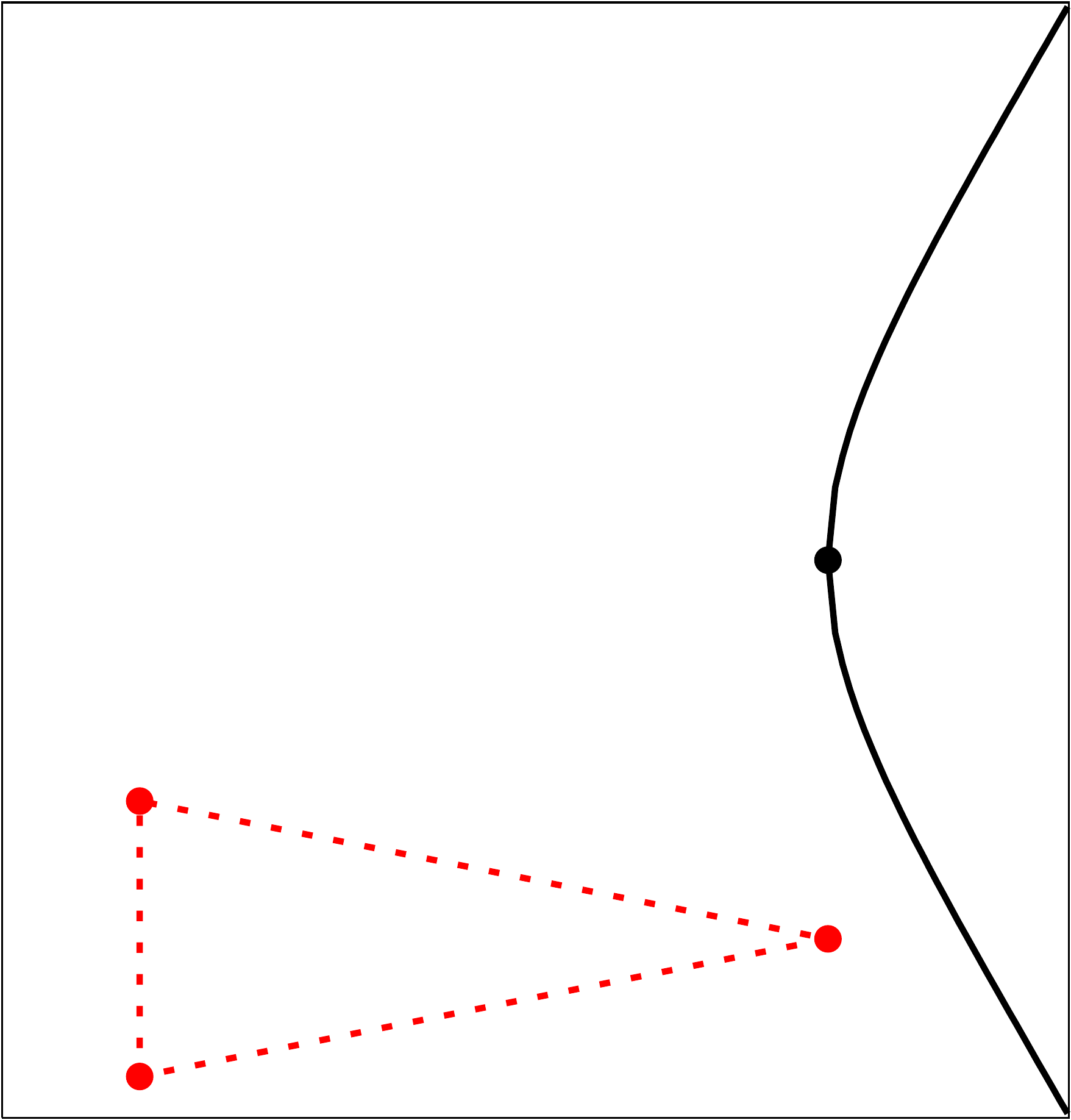}}\vspace{-.3cm}
$$ k\approx-3.91,~b>0,\qquad\qquad k\approx-0.58,~b<0,$$\smallskip

\centerline{\includegraphics[width=1.5in]{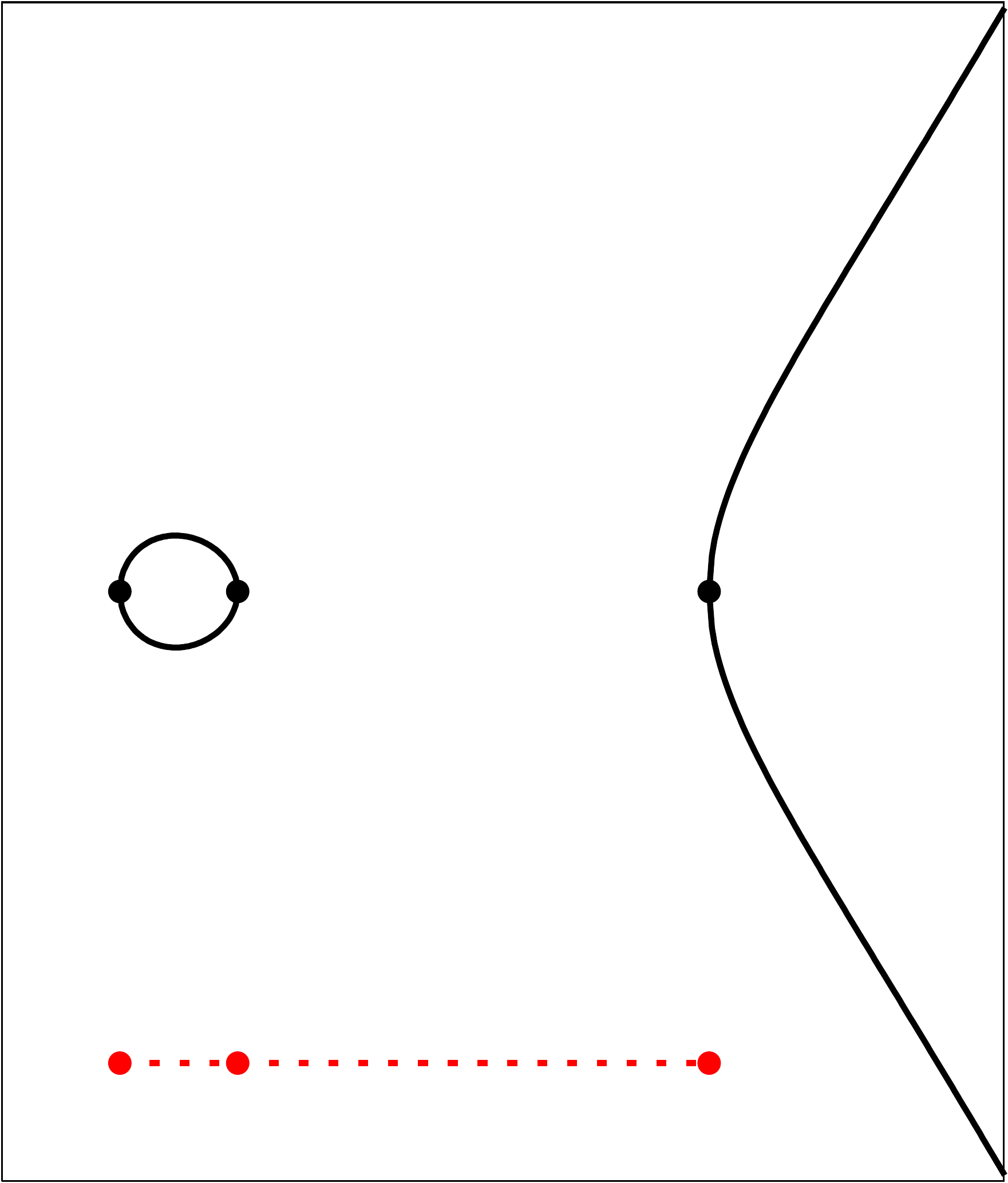}\qquad\qquad
\includegraphics[width=1.5in]{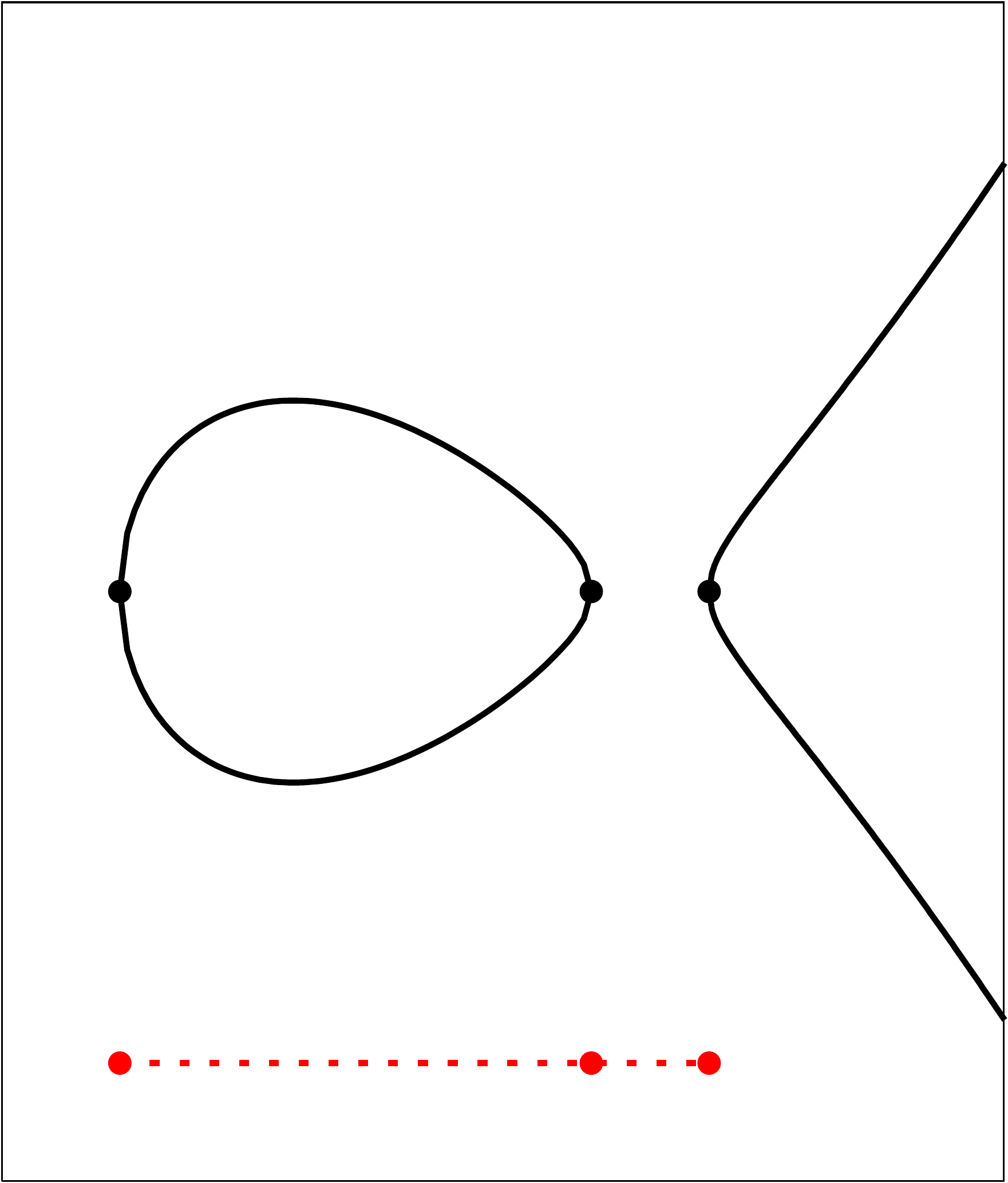}}\vspace{-.3cm}
$$ k\approx 1.63,~b<0,\qquad\qquad k\approx 5.75,~ b>0~.$$
\mycaption{\label{F-realwei} \sf Examples of pairs of distinct real curves
 in standard normal form which have the same $J$ invariant, giving the
corresponding value of the Hesse $k$ invariant. 
$($See Theorem $\ref{T-realH}.)$
The curve in the real $(x,y)$-plane
is shown in solid curves, and the corresponding
 triangle in the complex \hbox{$x$-plane} is
shown below in dotted lines. For the two top figures we have $J=-.583$,
and for  the bottom figures, $J=3.43$.}
\end{figure}
\smallskip

More geometrically, if the transformation
$$ x\mapsto t^2x\,, ~~y\mapsto t^3,~~ a\mapsto t^4a\,,~~ b\mapsto t^6b$$
is to change the sign of $b$ without changing $a$, then we must have $t^2=-1$.
Thus we must also change the sign of $x$. In particular, the associated  
triangle in the complex $x$-plane will be rotated by $180^\circ$.
But we we must also multiply $y$ by $\sqrt{-1}$, which
 makes a drastic change in the real curve.
 Compare Figures \ref{F-jpic} and \ref{F-realwei}.
Similarly, a change in the sign of $~a~$ corresponds to a $90^\circ$
rotation of the complex $x$-plane.\medskip

Now compare Figure \ref{F-kJ}. This graph shows that each real $J$
corresponds to two possible values of the Hesse parameter $k$ (although
the case $~J=1\Leftrightarrow b=0~$ seems quite different from the other
 cases). For $J\ne 1$ the two distinct real values of $k$ are related
by the involution\break $k\leftrightarrow\bet(k)$ of equation (\ref{E-eta}).
In fact, we have the following statement.
\medskip

\begin{theo}\label{T-realH}
Every smooth real cubic curve $\cC_\R$ is real projectively equivalent to the
real Hesse curve $\cC(k)_\R$ for one and only one real $k\ne 1$.
This curve $\cC(k)_\R$ is connected if $k<1$, and has two components if $k>1$.
\end{theo}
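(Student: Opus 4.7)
The plan is to reduce the given $\cC_\R$ to standard form (using Lemma \ref{L-realflex} and Theorem \ref{T-Nag}), then match it to a Hesse curve $\cC(k)_\R$ by carrying out the reduction of Theorem \ref{T-Nag} explicitly on the Hesse curve itself and reading off how the complete real invariant $\bigl(J,\,\mathrm{sign}(b)\bigr)$ of Lemma \ref{L-real-invar} depends on $k$. Existence, uniqueness, and connectivity will all follow once this dependence is understood.

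Given $\cC_\R$ smooth, Lemma \ref{L-realflex} supplies a real flex point and Theorem \ref{T-Nag} brings $\cC_\R$ into real standard form $y^2=x^3+ax+b$; by Lemma \ref{L-real-invar} the pair $\bigl(J,\,\mathrm{sign}(b)\bigr)$ (with $\mathrm{sign}(a)$ replacing $\mathrm{sign}(b)$ in the exceptional case $b=0$) is a complete invariant for real projective equivalence. So it suffices to show that $k\mapsto\bigl(J(k),\,\mathrm{sign}(b(k))\bigr)$ is a bijection from $\R\setminus\{1\}$ onto the set of admissible invariant pairs, and that connectivity is governed by this invariant in the way claimed.

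To set up this correspondence I would run the four-step reduction of Theorem \ref{T-Nag} on $\cC(k)$ starting from its real flex point $(0:1:-1)$ with tangent line $kx+y+z=0$. After setting $Z=kx+y+z$, $X=x$, $Y=y$, rescaling $X$ by the real cube root $\alpha=\bigl(-3/(1-k^3)\bigr)^{1/3}$ to normalize the $Y^2Z$ coefficient to $-1$, completing the square in $Y$, and then shifting $X$ to kill the $X^2Z$ term, a routine calculation produces
\begin{equation*}
b(k)~=~\frac{k^6-20k^3-8}{96\,(1-k^3)^2},\qquad a(k)~=~-\frac{3^{1/3}\,k(k^3+8)}{16\,(1-k^3)^{4/3}}.
\end{equation*}
The numerator of $b(k)$ is the quadratic $s^2-20s-8$ in $s=k^3$, with simple real roots $s=10\pm 6\sqrt{3}=(1\pm\sqrt 3)^3$, so $b(k)$ has simple zeros at $k=1\pm\sqrt 3$ and changes sign there; meanwhile $\mathrm{sign}(a(k))=-\mathrm{sign}(k)\,\mathrm{sign}(k+2)$ is immediate from the factorization $k^3+8=(k+2)(k^2-2k+4)$, yielding in particular $a(1-\sqrt 3)>0$ and $a(1+\sqrt 3)<0$.

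From this the theorem is immediate. On $k<1$ the function $k\mapsto J(k)$ is two-to-one onto $(-\infty,1]$ (Figure \ref{F-kJ}), with the two preimages of each $J<1$ swapped by the involution $\bet$; because $b$ changes sign across the fixed point $k=1-\sqrt 3$, these two preimages are distinguished by $\mathrm{sign}(b)$, matching exactly the two classes allowed by Lemma \ref{L-real-invar}. At $J=1$ the unique $k<1$ preimage is $k=1-\sqrt 3$, with $b=0$ and $a>0$. The interval $k>1$ is handled identically, with the fixed point $k=1+\sqrt 3$ giving the $b=0$, $a<0$ class. For connectivity, $y^2=x^3+ax+b$ is connected iff $4a^3+27b^2>0$ iff $J<1$ or $(J=1,\,a>0)$, which by the above is exactly the condition $k<1$. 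The main obstacle will be the careful bookkeeping in the Hesse-to-standard reduction: tracking the real cube roots (which persist in $a(k)$ but cancel in the cleaner formula for $b(k)$) and choosing signs so that $\alpha$ is real in both components $k<1$ and $k>1$; once the formula for $b(k)$ is in hand the sign analysis is elementary.
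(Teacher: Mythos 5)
Your proposal is correct, and it follows the same overall route as the paper: produce a real flex point (Lemma \ref{L-realflex}), reduce to standard normal form (Theorem \ref{T-Nag}), use the pair $\big(J,\,\mathrm{sign}(b)\big)$ of Lemma \ref{L-real-invar} as a complete real invariant, show that $k\mapsto\big(J(k),\mathrm{sign}(b(k))\big)$ is a bijection from $\R\setminus\{1\}$ onto the admissible pairs, and then settle connectivity. The one place you genuinely diverge is in how the sign of $b(k)$ (the paper's Lemma \ref{L+-b}) and the component count are obtained. The paper argues softly: $b$ can vanish only where $J=1$, i.e.\ at $k=1\pm\sqrt3$, so its sign is constant on each of the four resulting subintervals and one spot-check per subinterval suffices; likewise the number of components is locally constant in $k$, so one example on each of $(-\infty,1)$ and $(1,+\infty)$ settles connectivity. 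You instead push the Hesse-to-standard reduction through symbolically to get closed formulas for $a(k)$ and $b(k)$ and then read off signs and apply the discriminant criterion $4a^3+27b^2>0$ directly. Your formulas do check out: at $k=0$ they give $(a,b)=(0,-1/12)$, which agrees with a direct reduction of $x^3+y^3+z^3=0$, and the identity $(s^2-20s-8)^2-s(s+8)^3=64(1-s)^3$ with $s=k^3$ confirms consistency with $J(k)$ in equation (\ref{E-Fr}). The explicit route costs a tedious (and error-prone) computation but is fully self-contained and yields the formulas themselves; the paper's version is shorter but quietly relies on the fact that $b(k)$ can be chosen to vary continuously on each subinterval, which is what makes ``check one example'' legitimate.
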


To begin the proof, note that $\cC_\R(k)$ is smooth if and only if 
\hbox{$k\ne 1$.} (Compare Lemma \ref{L-Hsing}.)

\begin{lem}\label{L+-b}
For $k\ne 1$, putting this curve into the  standard normal form 
\hbox{$y^2=x^3+ax+b$,} we have 
$b<0$ if and  only if $$1-\sqrt 3~<~k~<~1+\sqrt 3~,$$
and $b=0$ if and only if $k=1\pm\sqrt 3$, with $b>0$ otherwise.
\end{lem}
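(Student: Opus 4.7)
The plan is to reduce $\cC(k)_\R$ to standard normal form by an explicit real projective transformation and read off $b$ as a rational function of $k$. Any two such reductions are related by $x \mapsto t^{2}x$, $y \mapsto t^{3}y$ with $t \in \R$ nonzero (Lemma~\ref{L-invar}), and since $t^{6}>0$ the sign of $b$ is a well-defined invariant of $\cC(k)_\R$; it therefore suffices to compute it for one specific choice of transformation.

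I would start with the flex $\o = (1:-1:0)$, which lies on $\cC(k)$ for every $k$ by (\ref{E-hess-flex}), and whose tangent line is $x+y+kz=0$, then apply the four-step reduction from the proof of Theorem~\ref{T-Nag}. Assume first that $k \ne 0$, so that the real linear map
\[
X \,=\, x+y, \qquad Y \,=\, x, \qquad Z \,=\, x+y+kz
\]
is non-singular; it sends $\o$ to $(0:1:0)$ and the tangent to $\{Z=0\}$. Substituting into $\Phi_k$ and restricting to the affine chart $Z=1$, the equation of $\cC(k)$ becomes
\[
3k^{3}Y^{2} - 3k^{3}XY + (k^{3}-1)X^{3} + 3X^{2} - 3X + 1 \,=\, 0.
\]
Next, rescale $X \mapsto \alpha X$, $Y \mapsto \beta Y$ and divide the polynomial by $(k^{3}-1)\alpha^{3}$, taking $\beta^{2} = -(k^{3}-1)\alpha^{3}/(3k^{3})$ so that the $X^{3}$ and $Y^{2}$ coefficients become $+1$ and $-1$. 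Reality of $\beta$ forces $\alpha>0$ for $0<k<1$ and $\alpha<0$ for $k<0$ or $k>1$. Completing the square in $Y$ and translating $X$ to kill the $X^{2}$ term (Steps~3 and~4 of the proof of Theorem~\ref{T-Nag}), a routine computation yields
\[
b \,=\, \frac{-k^{3}\,(k^{6} - 20 k^{3} - 8)}{32\,(k^{3}-1)^{3}\,\alpha^{3}}.
\]

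To finish, observe that $u^{2} - 20u - 8$ has roots $u = 10 \pm 6\sqrt{3} = (1\pm\sqrt{3})^{3}$, so the factor $k^{6}-20k^{3}-8$ vanishes precisely at $k = 1 \pm \sqrt{3}$; the spurious zero at $k=0$ merely reflects the degeneration of our coordinate change there, and is resolved either by using a different real flex or by noting that $J(0)=0\ne 1$ forces $b(0)\ne 0$. Hence $b = 0$ if and only if $k = 1 \pm \sqrt{3}$. On each of the four intervals $(-\infty, 1-\sqrt{3})$, $(1-\sqrt{3}, 1)$, $(1, 1+\sqrt{3})$, $(1+\sqrt{3}, \infty)$ the sign of $b$ is read from the formula by tracking the signs of $-k^{3}$, $k^{6}-20k^{3}-8$, $(k^{3}-1)^{3}$ and $\alpha^{3}$ in that interval; one finds $b>0$ on the outer two intervals and $b<0$ on the inner two, as claimed.

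The only real obstacle is computational: one must carry symbolic expressions through the rescaling, completion of the square, and $X$-translation while correctly handling the $XY$ cross term and the sign constraint on $\alpha$ imposed by reality of $\beta$. Nothing conceptually difficult remains once the explicit formula for $b$ is in hand.
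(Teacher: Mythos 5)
Your proposal is correct, and I checked the key computation: with $X=x+y$, $Y=x$, $Z=x+y+kz$ the Hesse cubic does become $(k^3-1)X^3+3k^3Y^2-3k^3XY+3X^2-3X+1=0$ in the chart $Z=1$, and carrying Steps 2--4 of the proof of Theorem~\ref{T-Nag} through the rescaling, the completion of the square, and the translation in $X$ gives exactly $b=-k^3(k^6-20k^3-8)\big/\big(32(k^3-1)^3\alpha^3\big)$, whose numerator vanishes for $k\ne 0$ precisely when $k^3=(1\pm\sqrt3)^3$, i.e.\ $k=1\pm\sqrt 3$. Your route is, however, genuinely different from the paper's. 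The paper observes that $b=0$ is equivalent to $J=1$, solves $J(k)=1$ from formula (\ref{E-Fr}) to get $k=1\pm\sqrt 3$, and then argues that the sign of $b$ is constant on each of the four subintervals cut out by $1\pm\sqrt3$ and the singular value $k=1$, so that it suffices to inspect one sample curve per subinterval (the examples of Figure~\ref{F-realwei}). That argument is short because it reuses (\ref{E-Fr}), but it leaves implicit the continuity of the sign of $b$ in $k$ and relies on four sample computations; your closed formula makes both unnecessary and reduces the sign determination to bookkeeping. The one place you pay for the explicitness is $k=0$, where your coordinate change is singular. Your second fix there ($J(0)=0\ne 1$ forces $b\ne 0$, plus local constancy of the sign) is fine, but the first one --- ``use a different real flex'' --- does not work as stated: the other two real flexes are the images of $(1:-1:0)$ under the cyclic permutation of coordinates, which is an automorphism of every $\cC(k)$, so the analogous transformation degenerates at $k=0$ as well. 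A cleaner repair is to take $X=x+y+z$ (a line through the chosen flex, distinct from the tangent $x+y+kz=0$ for every $k\ne 1$) in place of $X=x+y$; the reduction is then non-singular for all $k\ne 1$ and no special pleading is needed.
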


 \begin{proof}
Note first that $J=1$, or equivalently $b=0$, if and only if \break
\hbox{$k=1\pm\sqrt 3$.}  (Compare Figure \ref{F-kJ},  
together with the accompanying discussion.)
The two extremal points $k=1\pm\sqrt 3$,  together with the separating value
$k=1$, cut the real line into
four subintervals such that $J\ne 1\Leftrightarrow b\ne 0$ on each
subinterval. Thus it is enough to check one example on each subinterval,
as shown for example in Figure \ref{F-realwei}. 
\end{proof}

{\bf Note:} In the case $k<1$ with $\cC$ connected,
 a pair of test examples which is even easier to work 
with is the following: The Hesse curve $\cC(-2)_\R$ is projectively equivalent
 to the curve $y^2=x^3+x$ in standard form, while $\cC(0)_\R$ is projectively
 equivalent to  $y^2=x^3-x$. (These two examples, with
 $k\in(-\infty,\,1-\sqrt 3)$ and $k\in(1-\sqrt 3,\,1)$ respectively, both
 correspond to the case $J=0$.)
For $k>1$, a more geometric discussion will be given in Remark \ref{R-k>1}
 below.

\medskip

\begin{proof}[Proof of Theorem \ref{T-realH}] It follows easily from 
Lemma \ref{L+-b} that, for each \hbox{$J\in\R$,} the two distinct values of $k$
correspond to two real curves which are not real projectively equivalent
since they are distinguished by the sign of $b$ (if $J\ne 1$), or the sign
of $a$ if $J=1$.
Thus there is a one-to-one correspondence between real projective equivalence
classes and real parameters $k\ne 1$. 

Finally, since the number of connected
components cannot change as $k$ varies over 
either of the connected intervals $(-\infty,\,1)$
and $(1,\,+\infty)$, it is enough to count the number of components
for one example in each interval. 
\end{proof}
\medskip

\begin{coro}[\bf Flex Points]\label{C-3flex}
Every smooth real cubic curve $\cC_\R$ has exactly three flex points.
\end{coro}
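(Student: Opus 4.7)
The plan is a two-step reduction: first reduce to Hesse normal form, then count explicitly. By Theorem \ref{T-realH}, the given smooth irreducible real cubic $\cC_\R$ is real projectively equivalent to some Hesse curve $\cC(k)_\R$ with $k\in\R\setminus\{1\}$. A real projective automorphism of $\bP^2$ sends $\bP^2(\R)$ to itself and, by Lemma \ref{L-hess}, carries the locus $\cH_\Phi=0$ to its counterpart, hence maps (real) flex points to (real) flex points. Thus it suffices to count the flex points of $\cC(k)_\R$ in $\bP^2(\R)$.

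Next, by Remark \ref{R-hess-flex}, the nine flex points of any smooth $\cC(k)$ are precisely the nine exceptional points of equation (\ref{E-hess-flex}), namely
$$(0:1:-\gamma),\quad (-\gamma:0:1),\quad (1:-\gamma:0),\qquad \gamma^3=1.$$
I would then argue that among the three cube roots of unity $\{1,\,e^{2\pi i/3},\,e^{-2\pi i/3}\}$, only $\gamma=1$ produces a point in $\bP^2(\R)$. For instance, a representative $(0,\lambda,-\lambda\gamma)\in\C^3$ of $(0:1:-\gamma)$ lies in $\R^3$ if and only if both $\lambda$ and $-\lambda\gamma$ are real, which forces $\lambda\in\R^*$ and then $\gamma\in\R$, so $\gamma=1$. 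The same reasoning handles the other two families. This leaves exactly the three real points
$$(0:1:-1),\quad (-1:0:1),\quad (1:-1:0),$$
each of which visibly satisfies the Hesse equation $x^3+y^3+z^3=3kxyz$ for every $k$ since both sides vanish.

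Finally, to conclude I would note that a real point of $\cC_\R$ is a flex point of the real curve exactly when it is a flex point of the complex curve $\cC$, because the tangential intersection multiplicity is computed by the same algebraic equation $\cH_\Phi=0$ over either field. Combining this with the two steps above yields exactly three flex points on $\cC_\R$.

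I do not anticipate a serious obstacle; the work is mostly bookkeeping once the Hesse reduction (Theorem \ref{T-realH}) and the identification of flex points with exceptional points (Remark \ref{R-hess-flex}) are in hand. The only subtlety worth flagging is the verification that no ``extra'' real flex points can appear outside the nine complex ones, which is immediate from the fact that Flex$(\cC)$ is cut out by the single Hessian equation $\cH_\Phi=0$ regardless of the ground field.
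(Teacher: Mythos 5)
Your proposal is correct and follows essentially the same route as the paper: reduce to the real Hesse normal form via Theorem \ref{T-realH}, identify the nine flex points with the exceptional points of equation (\ref{E-hess-flex}) using Remark \ref{R-hess-flex}, and observe that exactly the three points with $\gamma=1$ (equivalently $x+y+z=xyz=0$) are real. Your extra remarks on invariance of the Hessian locus and on flex points being field-independent are correct bookkeeping that the paper leaves implicit.
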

\smallskip

\begin{proof} In Hesse normal form, the flex points are just the
``exceptional points'' listed in Equation (\ref{E-hess-flex}). Evidently
exactly three of these points are real, namely the three points
$(x:y:z)$ with $$x+y+z~=~xyz~=~0\,.$$ The conclusion follows.
\end{proof}
\medskip

\begin{rem}[\bf Topology]\label{R-top}
By definition, a simple closed curve in the real projective plane is 
\textbf{\textit{essential}} if it
generates the homology group $$H_1\big(\bP^2(\R);\, \Z\big)\cong\Z/2~.$$
Every essential simple closed curve has a neighborhood which is a M\"obius band;
while every inessential one bounds a topological disk.
As an example, every line in $\bP^2(\R)$ is essential.
Two simple closed curves with transverse intersections have an odd number of 
intersections if and only if both curves are essential. If we think of 
$\bP^2(\R)$ as a unit sphere with antipodal points identified, then an
essential curve is covered by a simple closed curve which cuts the sphere
into two antipodal pieces; while an inessential curve is covered by a pair of
simple closed curves which cut the sphere into three pieces.

It is not hard to see that every smooth irreducible real cubic $\cC_\R$
has a unique essential connected component, which
contains the three flex points. If there is a second component, then it
must be inessential.
\end{rem}
\medskip

\begin{coro}[\bf Automorphisms]\label{C-realaut}
The projective automorphism group $\Aut\big(\bP^2(\R),\,\cC_\R\big)$ 
 is non-abelian of order six and can be identified with the group of
 permutations of the three flex points. That is, every
 permutation of the flex points extends uniquely to a
projective automorphism of the pair $\big(\bP^2(\R),\,\cC_\R\big)$.
\end{coro}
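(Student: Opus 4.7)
The plan is to reduce to Hesse normal form and extract, from the complex automorphism group of Lemma~\ref{L-hess-aut} and Corollary~\ref{C-aut2}, exactly the subgroup defined over $\R$. By Theorem~\ref{T-realH} I may assume $\cC_\R=\cC(k)_\R$ for some real $k\ne 1$, and by Remark~\ref{R-hess-flex} the three real flex points are the real subset $\p_1=(1{:}{-}1{:}0)$, $\p_2=(0{:}1{:}{-}1)$, $\p_3=(-1{:}0{:}1)$ of the nine exceptional points of equation~(\ref{E-hess-flex}). Among the generators in Lemma~\ref{L-hess-aut}, the cyclic permutation $\sigma:(x{:}y{:}z)\mapsto(y{:}z{:}x)$ and the transposition $\biot:(x{:}y{:}z)\mapsto(y{:}x{:}z)$ are real; the other generator of $N$, and more generally every element of $N\setminus\langle\sigma\rangle$, has a defining diagonal-times-permutation matrix whose non-zero entries are $\gamma$-combinations that admit no common real rescaling. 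Since $\biot\sigma\biot^{-1}=\sigma^{-1}$ by Lemma~\ref{L-hess-aut}, the group $G:=\langle\sigma,\biot\rangle$ is non-abelian of order six, isomorphic to $S_3$.

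To see that $G$ exhausts $\Aut(\bP^2(\R),\cC_\R)$, I would apply the split exact sequence of Corollary~\ref{C-aut2} with base point $\p_0=\p_1$. The real part $N_\R$ is precisely $\langle\sigma\rangle\cong\Z/3$ by the observation above. For the stabilizer, use Lemma~\ref{L-realflex} and Theorem~\ref{T-Nag} to put $\cC_\R$ into standard normal form over $\R$ with $\p_0$ at infinity; then Lemma~\ref{L-invar} forces any automorphism fixing $\p_0$ to have the form $(x{:}y{:}z)\mapsto(t^2x{:}t^3y{:}z)$. Such a map is defined over $\R$ only when $t\in\R$, and the constraints $t^4a=a$ and $t^6b=b$ (with $a,b$ not both zero) force $t=\pm 1$ in every case. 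Hence $|\Aut(\bP^2(\R),\cC_\R)|=3\cdot 2=6=|G|$, so $G=\Aut(\bP^2(\R),\cC_\R)$.

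Finally, the action of $G$ on $\{\p_1,\p_2,\p_3\}$ gives a homomorphism to $S_3$. A direct check shows that $\sigma$ cycles $\p_1\mapsto\p_3\mapsto\p_2\mapsto\p_1$ while $\biot$ fixes $\p_1$ and swaps $\p_2\leftrightarrow\p_3$; the image therefore contains a 3-cycle and a transposition and so equals $S_3$. Since both groups have order six the homomorphism is an isomorphism, and every permutation of the three real flex points extends uniquely to a projective automorphism of $(\bP^2(\R),\cC_\R)$. The main obstacle I anticipate is the stabilizer calculation at the exceptional values $J\in\{0,1\}$: Corollary~\ref{C-aut2} allows a complex cyclic group of order $4$ or $6$ there, and one has to verify that the reality requirement on $t$ still cuts this group down to $\{\pm 1\}$, so no surprise real symmetries arise at $a=0$ or $b=0$.
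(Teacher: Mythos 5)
Your argument is correct, but it reaches the upper bound $\bigl|\Aut\bigl(\bP^2(\R),\,\cC_\R\bigr)\bigr|\le 6$ by a genuinely different route from the paper. The paper shows directly that the action on the three real flex points is faithful: a real automorphism fixing one flex point $\p_0$ extends to a complex one, hence acts on the tangent space at $\p_0$ by a root of unity (Corollary \ref{C-aut2}); the only real roots of unity are $\pm 1$, and $-1$ interchanges the other two flex points, so an automorphism fixing all three flex points is the identity and the whole group injects into $S_3$. You instead take the real points of each term of the split exact sequence of Corollary \ref{C-aut2} --- $N_\R=\langle\sigma\rangle$ of order $3$ by the matrix-reality observation, and a real stabilizer of order $2$ via Lemma \ref{L-invar} --- and multiply; your explicit handling of $a=0$ and $b=0$ correctly rules out extra real symmetries at $J\in\{0,1\}$, which the paper dispatches with the same ``real roots of unity'' phrase. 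The one step you should make explicit is why the product formula $3\cdot 2=6$ is legitimate: restricting a split exact sequence to a subgroup need not stay exact, and a priori a real automorphism $g$ could factor as $g=n\cdot s$ with $n\in N$ and $s\in\Aut(\bP^2,\,\cC,\,\p_0)$ both non-real. This does not happen here because $g$ permutes the three real flex points, $N$ acts simply transitively on all nine flex points while $\langle\sigma\rangle$ acts simply transitively on the three real ones; hence the unique $n\in N$ with $n(\p_0)=g(\p_0)$ lies in $\langle\sigma\rangle$ and is real, so $s=n^{-1}g$ is real as well. With that sentence added your count is airtight, and your closing verification that $\sigma$ and $\biot$ realize a $3$-cycle and a transposition matches the paper's lower-bound argument via coordinate permutations.
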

\smallskip

\begin{proof}
 Using the Hesse normal form,
it follows easily that the permutations of the three coordinates yield
a group of six automorphisms, which can be identified with the group of six
permutations of the three flex points.
To finish the proof, we
 must show that any automorphism which fixes all three flex points is the
identity. However, any real automorphism clearly extends to a complex
automorphism, so we can apply Corollary \ref{C-aut}. Any automorphism
which fixes one flex point $\p_0$ acts on the curve by a rotation by a root
of unity around $\p_0$; but the only real roots of unity are $+1$, which
corresponds to the identity automorphism, and $-1$ which 
interchanges the other two flex points. The conclusion follows.
\end{proof}
\medskip

\begin{figure}[!ht]
\centerline{\includegraphics[width=2.5in]{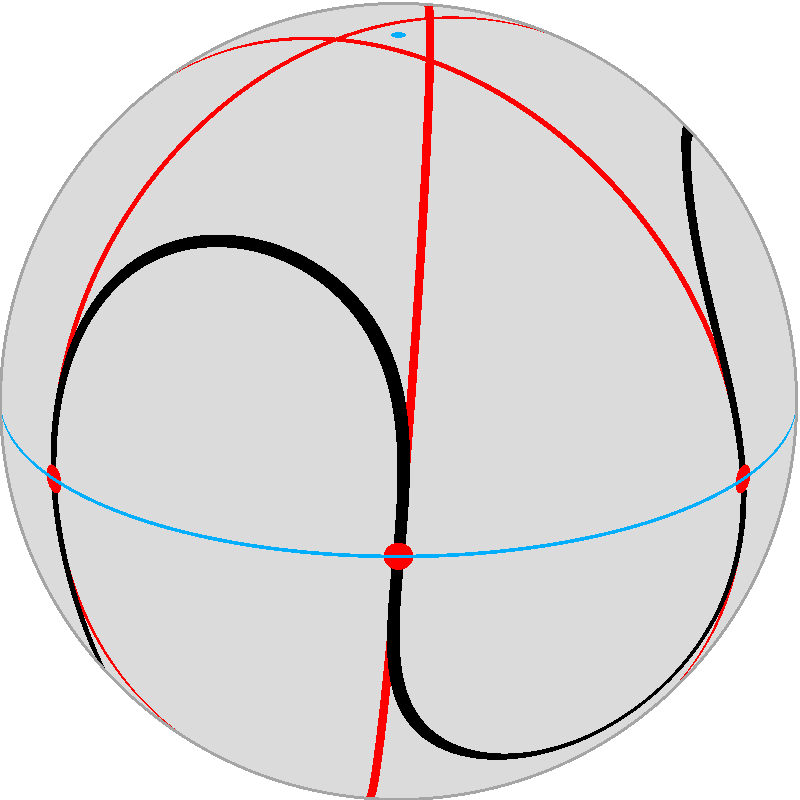}}
\mycaption{\label{F-sym}\sf Showing a typical real Hesse curve $\cC(-2.4)_\R$,
 with the projective plane
 $\bP^2(\R)$ represented as a sphere with opposite points 
identified.
The tangent lines at the three {\bf flex points} of this curve
 are also shown, as well
 as the center of symmetry (the north-south pole), and the line
through the three flex points (the equator).}
\end{figure}

\begin{figure}[!ht]
\centerline{\includegraphics[width=1.3in]{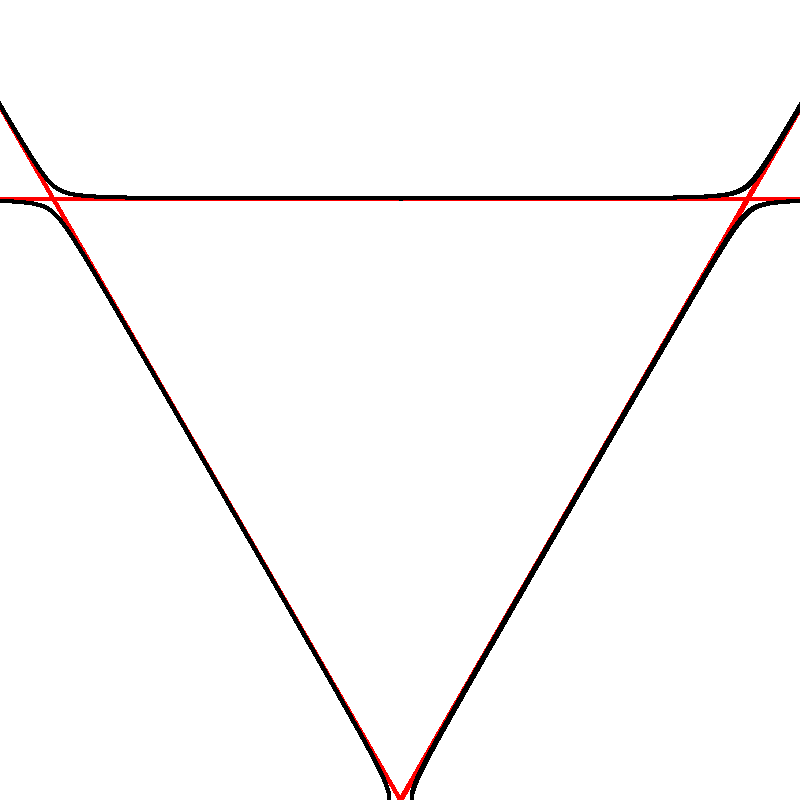}
\qquad\includegraphics[width=1.3in]{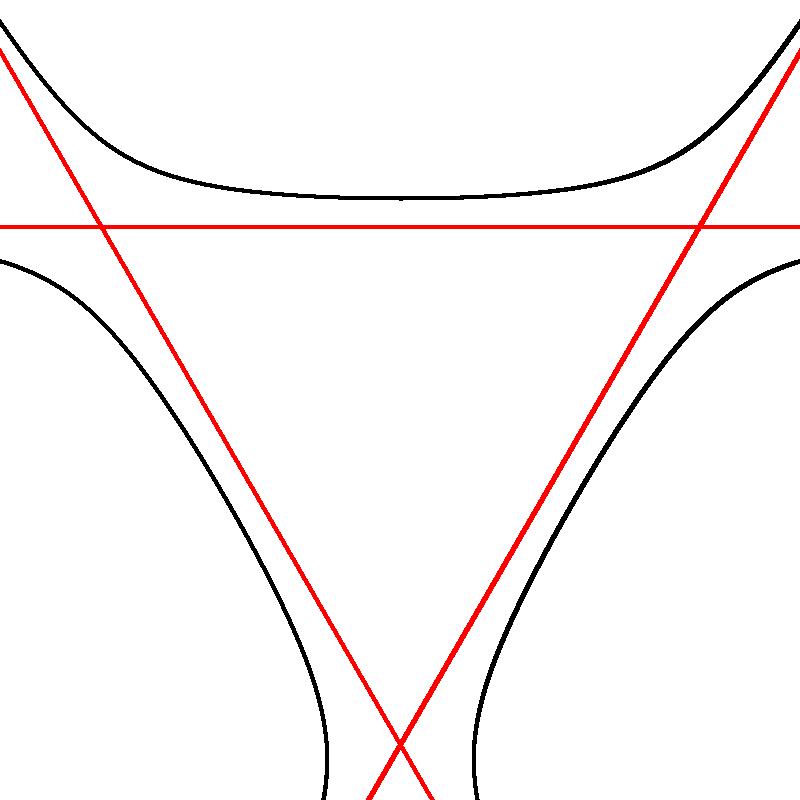}
\qquad\includegraphics[width=1.3in]{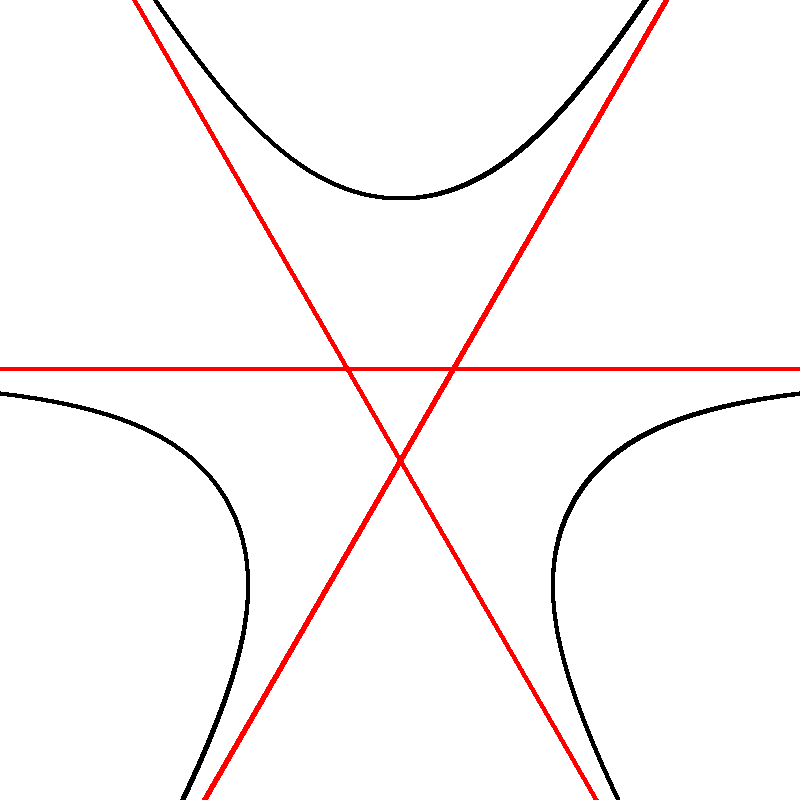}}\vspace{-.3cm}
$$k\approx -\infty\hspace{3.3in}$$
\bigskip

 \centerline{\includegraphics[width=1.3in]{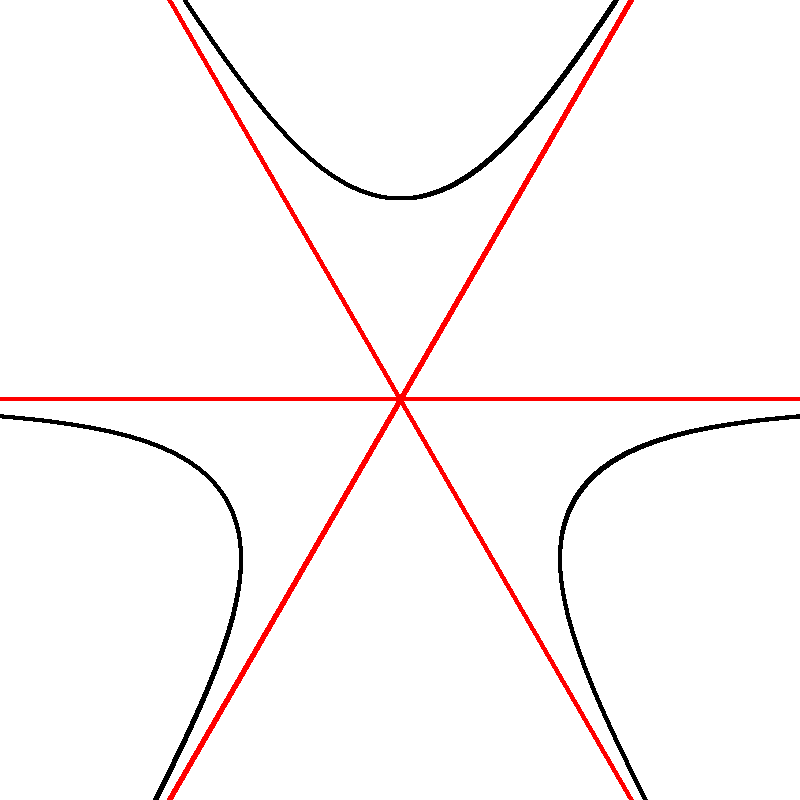}
\qquad\includegraphics[width=1.3in]{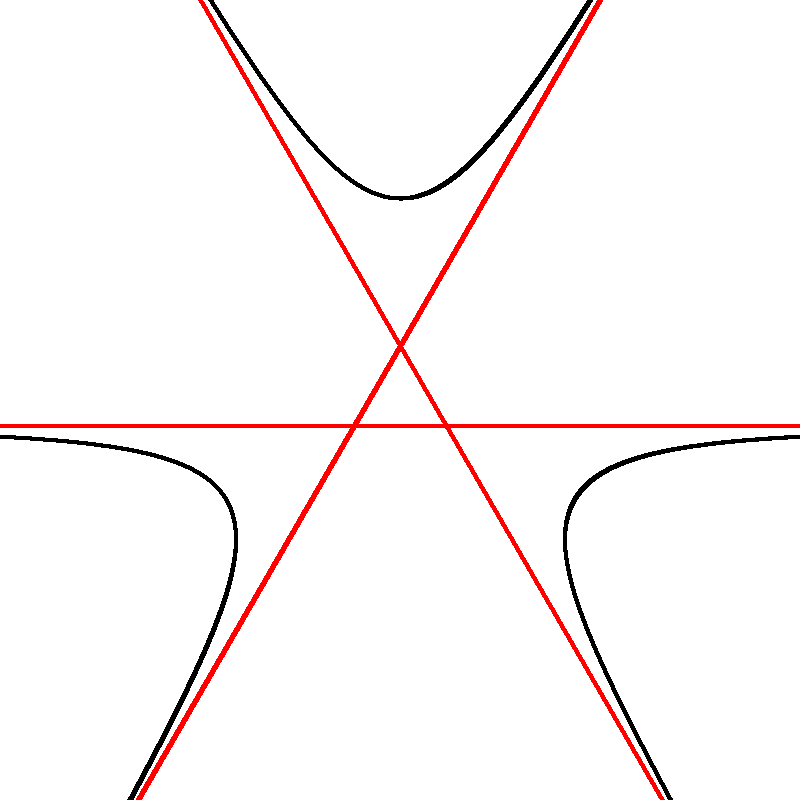}
\qquad\includegraphics[width=1.3in]{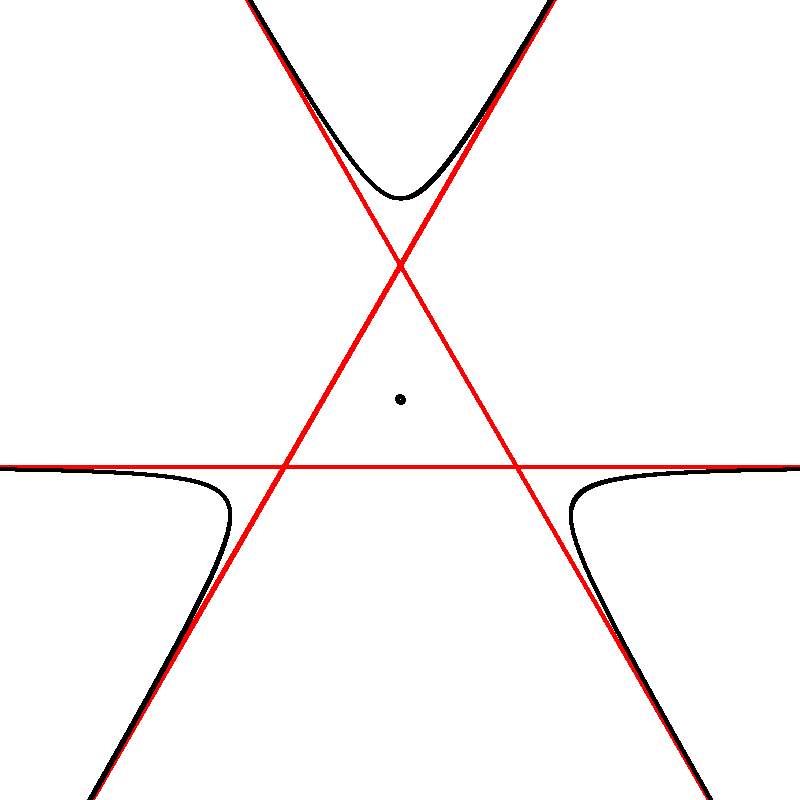}}\vspace{-.3cm}
$$k = -2  \hspace{3in} k = 1$$
\bigskip

 \centerline{\includegraphics[width=1.3in]{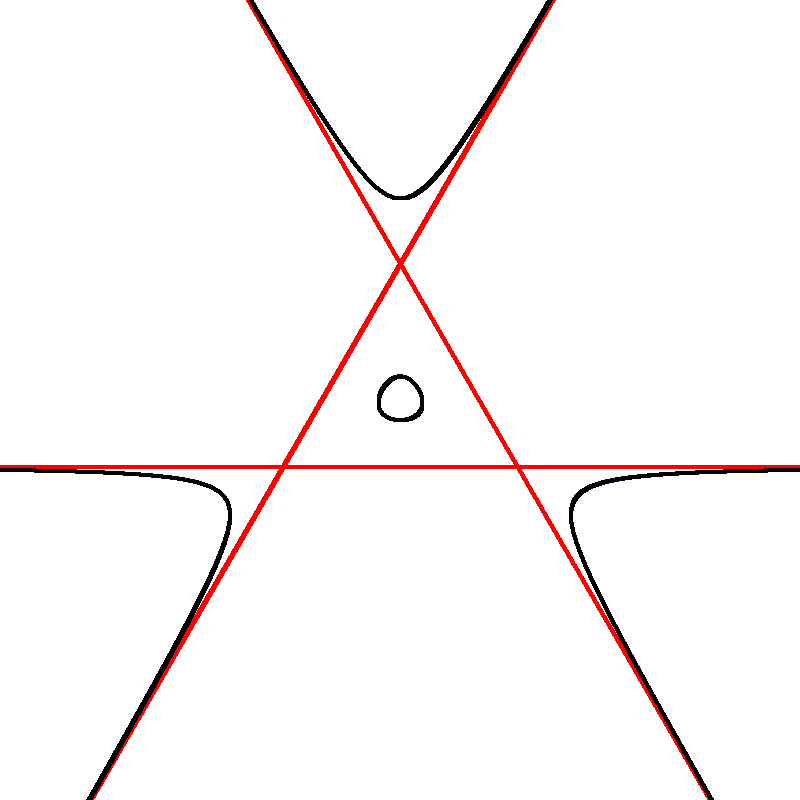}
\qquad\includegraphics[width=1.3in]{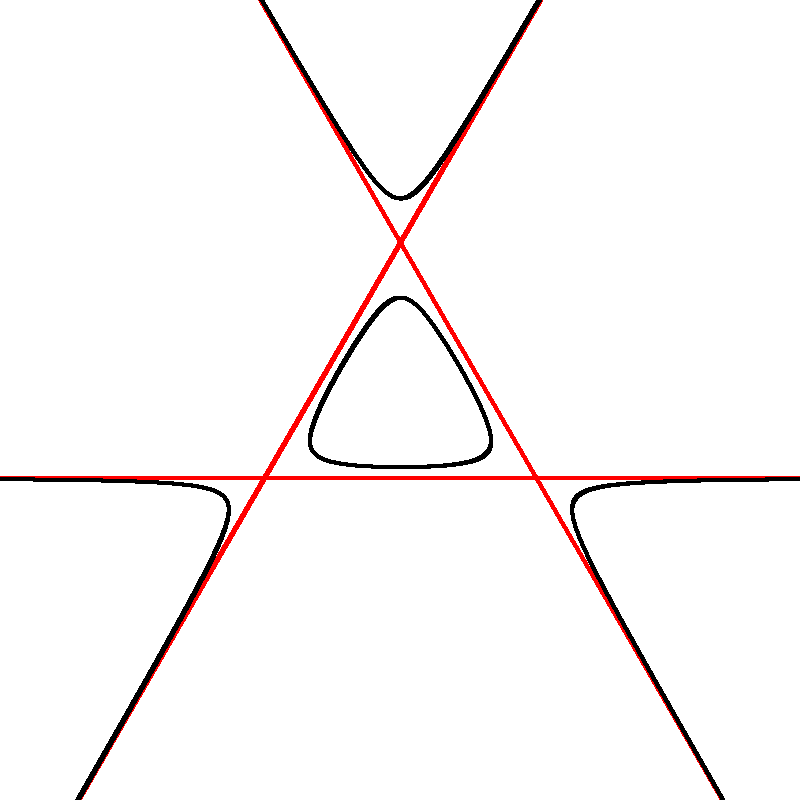}
\qquad\includegraphics[width=1.3in]{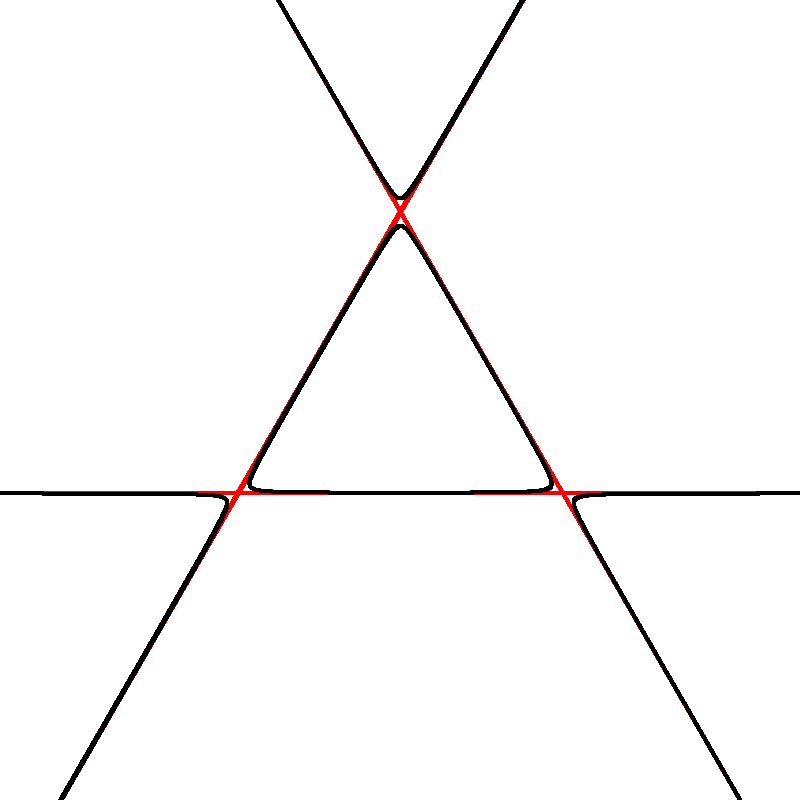}}\vspace{-.3cm}
$$\hspace{3.3in}k\approx +\infty$$
\mycaption{\label{F-canpics}\sf Nine pictures of real cubic curves
in canonical form, with Hesse invariant $k$ increasing from near $-\infty$
in the first picture, to near $+\infty$ in the last. Note that the
curve tends to a union of three straight lines as $k$ tends to $\pm\infty$.
The case $k=1$ is also singular, with an isolated point at the origin.
The case $k=-2$ (with $J=0$) 
is noteworthy, since this is the only case where
the three asymptotic lines meet at a common point.}
\end{figure}

\begin{rem}[\bf Visualizing Automorphisms]\label{R-vis}
If we use the standard normal form, or indeed almost any projectively
equivalent form, then the six automorphisms are very hard to visualize.
The picture becomes much clearer if
we choose a spherical metric for the projective plane which is
invariant under these automorphisms, as in Figure \ref{F-sym}. However,
it can still be confusing. For example, each of the three
involutions can be described either as a $180^\circ$ rotation about one
of the flex points (which lifts to an orientation preserving
rotation of the covering 2-sphere), or as a reflection about the line of
symmetry  (= great circle) which passes through the north-south pole, 
and crosses the equator halfway between the other two flex points. With the
second description, it evidently lifts to an orientation reversing reflection
of the 2-sphere.
\end{rem}
\medskip

\begin{rem}[\bf Canonical Position]\label{R-can}
Every real cubic curve can also be represented by a canonical picture
in the affine plane which makes
its six symmetries evident. Simply put
 the three flex points line at infinity, 
and put the center of symmetry at the origin. The tangent lines at the three
flex points will then appear as asymptotic lines. If we choose a Euclidean
metric so that the automorphisms are Euclidean isometries, then the picture
will be unique up to rotation and scale. Finally, we can choose a rotation
so that the reflection $(x,y)\leftrightarrow(-x,y)$ about the $y$ axis
is one of the automorphisms, and choose the scale so that $(0,1)$ is
the unique point
on the $y$-axis which belongs to the essential component of $\cC_\R$.
Then we will have a uniquely determined picture for each $\cC(k)_\R$. 
Some typical examples 
are shown in Figure \ref{F-canpics}.

As an extra bonus, this picture tends to a well defined limit as we approach
any one of the singular cases, at $k=1$ or $k=\pm\infty$. The limit as $k\to
1$ is a smooth curve plus an isolated point at the origin, while the
limit as $k\to\pm\infty$ is a union of three lines. 
\end{rem}

\begin{rem}\label{R-k>1} 
In the case $k>1$ when $\cC(k)_\R$ has two components, there is a direct 
geometric relationship between this canonical picture and the shape invariant
 of Proposition~\ref{P-tri}. Choose an axis of symmetry, for example the 
$y$-axis, in any of the  pictures in Figure~\ref{F-canpics}. Then the curve 
intersects this axis in three distinct points. As a fourth distinct point, 
choose the intersection point of this axis of symmetry with the horizontal 
asymptotic line. Labeling the coordinates of these points along the line
in order as $y_1,\,y_2,\,y_3,\,y_4$, we can form a variant of the cross-ratio:
$$ \chi~=~\frac{(y_1-y_4)(y_2-y_3)}{(y_1-y_2)(y_3-y_4)}~>~0~.$$
Now choose a projective equivalence between $\cC(k)_\R$ and a corresponding
 curve in standard normal form, with the axis of symmetry corresponding to the
 $x$-axis in standard coordinates.
Then the points $y_j$ will correspond to the points $r_1,\,r_2,\,r_3,\,\infty$,
where the $r_j$ are the roots of $x^3+ax+b$. Hence $\chi$ is equal to the 
cross-ratio $$ ~\chi~=~\frac{r_2-r_3}{r_1-r_2}~.$$ Now if we change the sign
 of the coefficient $b$, then we must rotate the complex $x$-plane by 
$180^\circ$. This will interchange $r_1$ and $r_3$, and hence replace $\chi$ by
 $1/\chi$. Inspecting Figure~\ref{F-canpics}, we see that $\chi$ tends to zero
 as $k\to 1$, and that $\chi$ tends to infinity as $k\to+\infty$.

\end{rem}

\end{document}